\newtheorem{theorem}{Theorem}[section]
\newtheorem{lemma}[theorem]{Lemma}
\newtheorem{proposition}[theorem]{Proposition}
\newtheorem{corollary}[theorem]{Corollary}
\newtheorem{definition}[theorem]{Definition}
\theoremstyle{remark}
\newtheorem{remark}[theorem]{Remark}
\newtheorem{example}[theorem]{Example}
\numberwithin{equation}{section}
\newcommand{\p}{\partial}
\newcommand{\z}{{\bf z}}
\newcommand{\bz}{{\bf z}}
\begin{document}

\title[Mean field equations and modular forms II]
{Mean field equations, hyperelliptic curves and modular forms: II}
\author{Chang-Shou Lin}
\address{Department of Mathematics and Center for Advanced Studies in Theoretic Sciences (CASTS), National Taiwan University, Taipei}
\email{cslin@math.ntu.edu.tw}

\author{Chin-Lung Wang}
\address{Department of Mathematics and Taida Institute of
Mathematical Sciences (TIMS), National Taiwan University, Taipei}
\email{dragon@math.ntu.edu.tw}
\urladdr{http://www.math.ntu.edu.tw/~dragon/}

\date{September 20, 2016}
\subjclass[2010]{33E10, 35J08, 35J75, 14H70}

\begin{abstract}
A \emph{pre-modular form} $Z_n(\sigma; \tau)$ of weight $\tfrac{1}{2} n(n + 1)$ is introduced for each $n \in \Bbb N$, where $(\sigma, \tau) \in \Bbb C \times \Bbb H$, such that for $E_\tau = \Bbb C/(\Bbb Z + \Bbb Z \tau)$, every non-trivial zero of $Z_n(\sigma; \tau)$, namely $\sigma \not\in E_\tau[2]$, corresponds to a (scaling family of) solution to the mean field equation
\begin{equation} \tag{MFE}
\triangle u + e^u = \rho \, \delta_0
\end{equation}
on the flat torus $E_\tau$ with singular strength $\rho = 8\pi n$. 

In Part I \cite{CLW}, a hyperelliptic curve $\bar X_n(\tau) \subset {\rm Sym}^n E_\tau$, the \emph{Lam\'e curve}, associated to the MFE was constructed. Our construction of $Z_n(\sigma; \tau)$ relies on a detailed study on the correspondence $\Bbb P^1 \leftarrow \bar X_n(\tau) \to E_\tau$ induced from the hyperelliptic projection and the addition map. 

As an application of the explicit form of the weight 10 pre-modular form $Z_4(\sigma; \tau)$, a counting formula for Lam\'e equations of degree $n = 4$ with finite monodromy is given in the appendix (by Y.-C.\ Chou).
\end{abstract}

\maketitle
\small
\tableofcontents
\normalsize
\setcounter{section}{-1}

\section{Introduction} \label{s:intro}
\setcounter{equation}{0}

Let $E = E_\tau = \Bbb C/\Lambda_\tau$, $\tau \in \Bbb H = \{\, \tau \in \Bbb C \mid {\rm Im}\,\tau > 0\,\}$ and $\Lambda = \Lambda_\tau = \Bbb Z \omega_1 + \Bbb Z \omega_2$ with $\omega_1 = 1$ and $\omega_2 = \tau$. In this paper, we continue our study, initiated in \cite{LW, CLW}, on the singular Louville (mean field) equation:
\begin{equation} \label{Liouville-eq}
\triangle u + e^u = 8\pi n\, \delta_0 \quad \mbox{on $E$},
\end{equation}
under the flat metric, with $\delta_0$ being the Dirac measure at $0 \in E$. The characteristic feature of this problem is that its solvability depends on the moduli $\tau$ in a sophisticated manner (even for $n = 1$, cf.~\cite{LW}).  

It was shown in \cite[\S 0.2.5, Theorem 0.3]{CLW} that any solution to \eqref{Liouville-eq} lies in a \emph{scaling family of solutions} $u^\lambda$ through the Liouville formula:
\begin{equation} \label{dev-map}
u^\lambda(z) = \log \frac{8 e^{2\lambda}|f'(z)|^2}{(1 + e^{2\lambda}|f(z)|^2)^2}, \quad \lambda \in \Bbb R,
\end{equation}
where the meromorphic function $f$ on $\Bbb C$ is known as a \emph{developing map} which can be chosen to be \emph{even} and and satisfy the \emph{type II constraints}:
\begin{equation} \label{type2}
f(z + \omega_j) = e^{2i \theta_j} f(z), \quad \theta_j \in \Bbb R, \,\quad j = 1, 2.
\end{equation}
This is also known as the \emph{unitary projective monodromy} condition. 

$f$ has precisely $n$ simple zeros in $E^\times$ characterized by \cite[Theorem 0.6]{CLW}: \smallskip

\emph{Thee $n$ zeros $a_1, \ldots, a_n \in E^\times$ of $f$ satisfy $a_i \ne \pm a_j$ for $i \ne j$, and they are completely determined by the $n - 1$ algebraic equations
\begin{equation} \label{e:P}
\sum_{i = 1}^n \wp'(a_i) \wp^r(a_i) = 0, \qquad r = 0,\ldots, n - 2,
\end{equation}
together with the transcendental equation on Green function
\begin{equation} \label{e:G}
\sum_{i = 1}^n \nabla G(a_i) = 0.
\end{equation}}

Following \cite{CLW}, the affine algebraic curve $X_n \subset {\rm Sym}^n E^\times$ defined by equations \eqref{e:P} and $a_i \ne \pm a_j$ for $i \ne j$ is called the ($n$-th) \emph{Liouville curve}.

We will make use of Weierstrass' elliptic function $\wp(z) = \wp(z; \Lambda)$ and its associated $\zeta$, $\sigma$ functions extensively. We use \cite{Whittaker} as a general reference. 

The Green function on $E$ is defined by $ -\triangle G = \delta_0 - {1}/{|E|}$ and $\int_{E} G = 0$. For $z = x + i y = r\omega_1 + s \omega_2$, $r, s \in \Bbb R$, and $\eta_i = 2\zeta(\tfrac{1}{2}\omega_i)$, $i = 1, 2$, being the quasi-periods, it was shown in \cite[Lemma 2.3, Lemma 7.1]{LW} that
\begin{equation} \label{e:Gz}
-4\pi G_z(z; \tau) = \zeta(z; \tau) - r \eta_1(\tau) - s \eta_2(\tau).
\end{equation}
For $z \in E_\tau[N]$, the $N$ torsion points, this first appeared in \cite{Hecke} where Hecke showed that it is a modular form of weight one with respect to $\Gamma(N) = \{A \in {\rm SL}(2, \Bbb Z) \mid A \equiv I_2 \pmod{N}\}$. Thus we call 
\begin{equation} \label{e:Hecke}
Z(z; \tau) = Z_{r, s}(\tau) := \zeta(r\omega_1 + s\omega_2; \tau) - r \eta_1(\tau) - s \eta_2(\tau),
\end{equation}
$(z, \tau) \in \Bbb C \times \Bbb H$ the \emph{Hecke function}, which is holomorphic \emph{only} in $\tau$. In this paper, analytic functions of this sort are called \emph{pre-modular forms}. 

The notion of pre-modular forms allows us to study \emph{deformations} in $\sigma$ to relate \emph{different} modular forms. Recently this idea was successfully applied in \cite{CKLW} to give a complete solution to \eqref{Liouville-eq} for $n = 1$. In that case \eqref{e:P} is empty and the problem is equivalent to solving \emph{non-trivial zeros} of $Z(z; \tau)$, i.e.~$z \not\in E_\tau[2]$. Thus, a key step towards the general cases is to generalize the pre-modular form $Z = Z_1$ to the corresponding $Z_n$ for all $n \ge 2$.

Our starting point is the \emph{hyperelliptic  geometry} on $X_n$ arising from the integral \emph{Lam\'e equations} on $E_\tau$ \cite[Theorem 0.7]{CLW}:
\begin{equation} \label{lame}
w'' = (n(n + 1) \wp + B)w.
\end{equation}
For $a = (a_1, \ldots, a_n) \in \Bbb C^n$, let $w_a(z)$ be the classical \emph{Hermite--Halphen ansatz}:
\begin{equation} \label{ansatz}
w_a(z) := e^{z\sum \zeta(a_i; \tau)} \prod_{i = 1}^n \frac{\sigma(z - a_i; \tau)}{\sigma(z; \tau)}.
\end{equation} 
\smallskip

\emph{Denote $[a] := a \pmod{\Lambda}$. Then $[a] \in X_n$ if and only if $w_a$ and $w_{-a}$ are independent solutions to \eqref{lame}. In that case, the parameter $B$ equals
\begin{equation} \label{e:Ba}
B_a := (2n - 1)\sum_{i = 1}^n \wp(a_i).
\end{equation}}

\emph{The compactified curve $\bar X_n \subset {\rm Sym}^n E$ is a hyperelliptic curve, known as the \emph{Lam\'e curve}, with the addd points $\bar X_n \setminus X_n$ being the branch points of the hyperelliptic projection $B: \bar X_n \to \Bbb P^1$. The \emph{point at infinity} $0^n \in \bar X_n$ is always smooth. The finite branch points satisfy $a \in (E^\times)^n$, $a_i \ne a_j$ for $i \ne j$, and $\{a_1, \cdots, a_n\} = \{-a_1, \cdots, -a_n\}$; $w_a = w_{-a}$ is still a solution to \eqref{lame} with $B= B_a$. These solutions are known as the \emph{Lam\'e functions}.} 

\emph{Let $Y_n = B^{-1}(\Bbb C)$ be the finite part of $\bar X_n$. $Y_n$ can be parametrized by 
$$
Y_n \cong \{(B, C) \mid C^2 = \ell_n(B)\}
$$ 
where $\ell_n(B)$ is the Lam\'e polynomial in $B$ of degree $2n + 1$. $\bar X_n$ is smooth if and only if $\ell_n(B)$ has no multiple roots.} \smallskip

Further technical details needed from \cite[Theorem 0.7]{CLW} are summarized in Proposition \ref{typeII} and Theorem \ref{hyper-thm}. 

By the anti-symmetry of $\nabla G$, \eqref{e:G} holds automatically on the branch points of $Y_n$, hence they are referred as \emph{trivial solutions}. We will construct a pre-modular form $Z_n(\sigma; \tau)$ with $\sigma \in E_\tau$ which is naturally associated to the family of hyperelliptic curves $\bar X_n(\tau)$, $\tau \in \Bbb H$. The goal is to show that any non-trivial solution $a = \{a_1, \cdots, a_n\} \in X_n$ to  \eqref{e:G} comes from the zero of $Z_n(\sigma; \tau)$ with $\sigma = \sum_{i = 1}^n a_i \not\in E_\tau[2]$, and vice versa. \smallskip

Consider the meromorphic function 
\begin{equation} \label{def-zn}
\z_n(a) := \zeta \Big(\sum_{i = 1}^n a_i \Big) - \sum_{i = 1}^n \zeta(a_i)
\end{equation}
on $E^n$. If $\sum_{i = 1}^n a_i \ne 0$ then
\begin{equation*}
\begin{split}
-4\pi \sum \nabla G(a_i)  = \sum (\zeta(r_i \omega_1 + s_i \omega_2) - r_i \eta_1 - s_i \eta_2) = Z(\sum a_i) - \z_n(a).
\end{split}
\end{equation*}
Hence the Green function equation \eqref{e:G} is equivalent to
\begin{equation} \label{y=Z}
\z_n(a) = Z\Big(\sum_{i = 1}^n a_i\Big).
\end{equation}
This motivates us to study the map
\begin{equation} \label{e:sigma}
\sigma_n: \bar X_n \to E, \qquad a \mapsto \sigma_n(a) := \sum_{i = 1}^n a_i
\end{equation}
induced from the addition map $E^n \to E$. The algebraic curve $\bar X_n(\tau)$ might be singular for some $\tau$, but it must be irreducible (c.f.~Theorem \ref{hyper-thm} (3)). In particular, $\sigma_n$ is a finite morphism and $\deg \sigma_n$ is defined. 

Recall that a \emph{node} is a singularity of the simplest analytic type $y^2 = x^2$. 

\begin{theorem} [$=$ Theorem \ref{t:nodal} + Theorem \ref{degree}] \label{deg-sigma}
The Lam\'e curve $\bar X_n$ has at most nodal singularities. Moreover, the map $\sigma_n: \bar X_n \to E$ has degree $\tfrac{1}{2} n(n + 1)$.
\end{theorem}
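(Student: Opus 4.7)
The statement comprises two independent assertions; I sketch a plan for each.

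\emph{Nodal singularities.} Writing $\bar X_n = Y_n \cup \{0^n\}$ with $0^n$ smooth, all potential singularities lie in $Y_n \cong \{(B,C) : C^2 = \ell_n(B)\}$. Such a point $(B_0, 0)$ is singular iff $B_0$ is a multiple root of $\ell_n(B)$; a root of multiplicity $m$ yields an $A_{m-1}$ singularity, which is a node precisely when $m=2$. Hence I reduce the claim to showing that no root of $\ell_n(B)$ has multiplicity $\ge 3$. For this I would use the classical decomposition of the $2n+1$ Lam\'e functions into four Hermite--Halphen families indexed by the characters of $E_\tau[2]$, giving a factorisation $\ell_n(B) = \prod_\chi \ell_n^\chi(B)$. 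Each factor $\ell_n^\chi$ is the characteristic polynomial of a self-adjoint eigenvalue problem on a finite-dimensional space and hence has only simple roots. A multiple root of $\ell_n$ therefore arises only from a coincidence of roots across two distinct factors $\ell_n^\chi$ and $\ell_n^{\chi'}$. Each such coincidence is a single (real) condition on $\tau$; a codimension count in the 1-parameter $\tau$-family, together with a local deformation analysis at the collision point, rules out simultaneous triple or higher coincidences and confirms that the resulting singularity is exactly nodal.

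\emph{Degree.} Using the embedding $\bar X_n \subset {\rm Sym}^n E$ and the fact that the summation map ${\rm Sym}^n E \to E$ is a $\mathbb{P}^{n-1}$-bundle, one has $\deg \sigma_n = [\bar X_n] \cdot [F]$, where $[F]\in H^2({\rm Sym}^n E)$ is the fiber class. Each defining equation $F_r := \sum_i \wp'(a_i)\wp(a_i)^r = 0$ is a symmetric rational function with pole divisor $(2r+3)\theta$ along the theta divisor $\theta = \{a : 0 \in \mathrm{supp}(a)\}$, hence $[\{F_r=0\}] = (2r+3)\theta$; and since $\theta|_F = \mathcal{O}_{\mathbb{P}^{n-1}}(1)$ we get $\theta^{n-1} \cdot [F] = 1$. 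A naive transversal intersection would then yield $\prod_{r=0}^{n-2}(2r+3) = (2n-1)!!$, which matches $\tfrac{1}{2}n(n+1)$ only for $n \le 2$. For $n \ge 3$ the divisors $\{F_r=0\}$ fail to be transverse: the antidiagonal locus $\{a_i = -a_j\text{ for some pair}\}$ together with collisions of $a_k$ at half-periods produces excess components of the set-theoretic intersection.

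To extract the true intersection number I would pursue a flat degeneration as $\tau \to i\infty$, where $E_\tau$ specialises to a nodal rational curve, $\wp$ degenerates to a trigonometric function, and the Lam\'e spectrum becomes classically explicit. In this limit $\bar X_n(\tau)$ degenerates to a reducible curve whose components admit a combinatorial parametrisation via the trigonometric Hermite--Halphen ansatz, and a direct count of preimages of a generic $\sigma \in E$ under $\sigma_n$ yields precisely $\tfrac{1}{2}n(n+1)$. Since $\deg \sigma_n$ is locally constant in the flat family $\bar X_n(\tau)$ over $\mathbb{H}$, this determines the degree for all $\tau$.

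The main obstacle is this last calculation: neither the residual-intersection bookkeeping on ${\rm Sym}^n E$ nor the explicit degeneration is routine. Identifying and subtracting the extraneous components on the antidiagonal strata (or, equivalently, tracking which preimages persist under the flat limit and with what multiplicities) requires nontrivial analysis of the symmetric product geometry and of the factorisation structure of $\ell_n(B)$.
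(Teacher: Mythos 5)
Your proposal is a plan with the two decisive computations missing, and in both halves the route you sketch has genuine gaps. For the nodal claim, the reduction to ``no root of $\ell_n(B)$ has multiplicity $\ge 3$'' is fine, but your mechanism does not work as stated. The simplicity of the roots of each factor $l_i(B)$ via a self-adjoint eigenvalue problem is only available on special (e.g.\ rectangular) tori; for a general $\tau \in \Bbb H$ there is no self-adjoint structure and nothing a priori prevents a single factor from acquiring a multiple root. Moreover, the codimension count cannot rule out triple coincidences: a collision of roots across two factors is one complex condition on the single complex parameter $\tau$, so ruling out two simultaneous collisions by ``expected codimension'' requires an independence statement you have not supplied, and such genericity arguments never exclude non-generic behavior at special $\tau$. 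The paper's proof of Theorem \ref{t:nodal} bypasses all of this with a purely local computation: by \eqref{e:C(a)} one expands $C(b) = \wp'(b_i)\prod_{j\ne i}(\wp(b_i)-\wp(b_j))$ near a finite branch point $a=-a$ and checks (separately for $a_i \in E[2]$ and $a_i \notin E[2]$, using $\wp''(a_i)\ne 0$ resp.\ $\wp'(a_i)\ne 0$) that $C$ vanishes to order exactly one along every local branch of $Y_n$ through $a$. Hence $C$ is a local coordinate on each branch, and since the local analytic model is $C^2 = (B-\lambda)^m$ this forces $m \le 2$. That local use of the $C$-coordinate is the key idea your plan is missing.

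For the degree, your intersection-theoretic setup correctly diagnoses that the naive count $(2n-1)!!$ overshoots because the divisors $\{F_r = 0\}$ meet excessively along the antidiagonal strata, but you then defer the entire content of the theorem to an unexecuted residual-intersection or $\tau \to i\infty$ degeneration; as written this establishes nothing, and controlling the flat limit of $\bar X_n(\tau)$ with multiplicities is at least as hard as the original problem. The paper's Theorem \ref{degree} takes a different, self-contained route: it passes to the ordered curve $V_n \subset E^n$ and applies the Theorem of the Cube \eqref{t:cube} to the partial-sum maps $f_j(a) = a_1 + \cdots + a_j$, which yields the inductive identity $\deg f_{j+1}^*L = n!\left(\tfrac12 j(j+1) + 3 + \tfrac12 j(j+1) - \tfrac12 (j-1)j - 2\right) = \tfrac12 (j+1)(j+2)\, n!$ once the cases $j=1,2$ are known. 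Those base cases are then explicit fiber counts over $0 \in E$: the fiber of $f_1$ is the single point $0^n$ with exactly $n!$ branches by Theorem \ref{hyper-thm}(5); the extra points of $f_2^{-1}(0)$ satisfy $a_1 = -a_2$, hence $a = -a$ by Theorem \ref{hyper-thm}(4), and are enumerated type by type via Proposition \ref{4lame}, with Theorem \ref{t:nodal} guaranteeing $f_2$ is unramified there. This replaces your global excess-intersection bookkeeping by two finite counts at the branch points, which is exactly where the structure of the Lam\'e functions enters. If you wish to proceed, I would recommend adopting this reduction rather than attempting the degeneration.
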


From Theorem \ref{deg-sigma}, there is a polynomial 
$$
W_n(\bz) \in \Bbb Q[g_2, g_3, \wp(\sigma), \wp'(\sigma)][\bz]
$$ 
of degree $\tfrac{1}{2} n(n + 1)$ in $\bz$ which defines the (branched) covering map $\sigma_n$. Throughout the paper we use $\sigma$ as the coordinate on $E$ in $\sigma_n: \bar X_n \to E$ and this should not be confused with the Weierstrass $\sigma$ function.

The next task is to find a natural primitive element of this covering map, namely a rational function on $\bar X_n$ which has $W_n$ as its minimal polynomial. This is achieved by the following fundamental theorem:

\begin{theorem} \label{t:primitive}
The rational function $\z_n \in K(\bar X_n)$ is a primitive generator for the field extension $K(\bar X_n)$ over $K(E)$ which is integral over the affine curve $E^\times$. 
%
\end{theorem}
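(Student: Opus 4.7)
The claim has two parts: integrality of $\z_n$ over $\Bbb C[\wp(\sigma),\wp'(\sigma)]$, and primitivity $K(E)(\z_n)=K(\bar X_n)$. I first observe that $\z_n(a)=\zeta(\sum a_i)-\sum\zeta(a_i)$ descends from $\Bbb C^n$ to $\mathrm{Sym}^n E$: symmetry in $a_i$ is manifest, and under $a_i\mapsto a_i+\omega$ with $\omega\in\Lambda$ the quasi-period $\eta(\omega)$ appears once in each of the two summands with opposite signs, hence cancels. So $\z_n$ is a well-defined meromorphic function on $\bar X_n$.

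For the integrality part, my plan is to show $\z_n$ is regular on the affine piece $\sigma_n^{-1}(E^\times)$; once this is in hand the $N=\tfrac12 n(n+1)$ fibre values have elementary symmetric functions regular on $E^\times$, producing a monic polynomial $W_n(\bz)$ in $\bz$ of degree $N$ with coefficients in $\Bbb C[\wp(\sigma),\wp'(\sigma)]$. Regularity is checked by cases using Theorem \ref{t:nodal} and Theorem \ref{hyper-thm}. On the smooth stratum $X_n\cap\sigma_n^{-1}(E^\times)$, all $a_i$ and $\sigma=\sum a_i$ avoid $0\in E$, so no $\zeta$-pole occurs. At a finite branch point $\{a_i\}=\{-a_i\}$ as multisets with $a_i\in E^\times$, so the tuple decomposes into $\pm$-pairs and 2-torsion fixed points; by oddness of $\zeta$ the $\pm$-pairs cancel in $\sum\zeta(a_i)$, leaving a finite sum over the 2-torsion entries, and $\sigma_n(a)\in E[2]$ is non-zero on $\sigma_n^{-1}(E^\times)$. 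Nodes are handled by the same arguments applied separately to their two analytic branches.

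For the primitivity part, I plan to rephrase it as saying that $\Phi=(\sigma_n,\z_n)\colon\bar X_n\to E\times\Bbb C$ is birational onto its image, which by irreducibility of $\bar X_n$ (Theorem \ref{hyper-thm}(3)) is equivalent to $\z_n$ separating a generic fibre of $\sigma_n$. Heuristically, the $n-1$ equations \eqref{e:P} together with $\sum a_i=\sigma$ cut out $N$ tuples, and demanding $\sum\zeta(a_i)=c$ supplies one further transcendental equation which generically isolates at most one tuple. To make this rigorous I would take a degeneration, for example $\tau\to i\infty$, under which $E_\tau$ degenerates to $\Bbb C^\times$, the functions $\wp,\zeta$ become trigonometric, and both $\bar X_n$ and $\z_n$ admit explicit descriptions; verifying separation on a generic fibre of the degenerate cover by direct combinatorial computation and then invoking upper semi-continuity of the fibre size of $\Phi$ would force birationality for all $\tau$.

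The main obstacle is the primitivity claim: the fibre points of $\sigma_n$ are determined by algebraic conditions on $\wp(a_i),\wp'(a_i)$, while the separator $\z_n$ is transcendental in nature, so distinctness of the $N$ values $\z_n(a)$ cannot be read off directly from \eqref{e:P}. Either the degeneration argument sketched above, or an analytic argument on the correspondence $\bar X_n\times_E\bar X_n\setminus\Delta$ showing that $\z_n(a)-\z_n(b)$ does not vanish identically on any component, is required; both route past the purely algebraic structure of $\bar X_n$ by exploiting the $\zeta$-function.
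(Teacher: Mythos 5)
Your integrality argument and the reduction of primitivity to ``$\z_n$ separates a generic fibre of $\sigma_n$'' both match the paper's setup (the paper makes the same reduction in the proof of Theorem \ref{main-thm}, using $\deg\sigma_n=\tfrac12 n(n+1)$ from Theorem \ref{degree}). But the separation statement is exactly the hard content of the theorem --- the paper isolates it as Theorem \ref{key-claim} --- and your proposal does not prove it. You offer two routes (a trigonometric degeneration at $\tau\to i\infty$, or a correspondence argument on $\bar X_n\times_E\bar X_n\setminus\Delta$) and explicitly leave the decisive computation undone. That is a genuine gap, not a routine verification: distinctness of the $\tfrac12 n(n+1)$ values $\sum\zeta(a_i)$ on a fibre is a transcendental statement that does not follow from the defining equations \eqref{e:P}, and nothing in your sketch rules out, say, two fibre points with $B_a\ne B_b$ but equal $\sum\zeta$. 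Moreover, even granting the degenerate-limit computation, upper semi-continuity would only give separation for $\tau$ in a neighbourhood of the cusp (or for generic $\tau$); the theorem, and its application in Theorem \ref{zero-pre-mod}, require it for \emph{every} $\tau$, including those where $\bar X_n(\tau)$ is nodal --- the paper stresses that smoothness is not assumed.

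For comparison, the paper's mechanism is entirely different and is worth absorbing: if $\sum a_i=\sum b_i$ and $\sum\zeta(a_i)=\sum\zeta(b_i)$, then $q=w_aw_{-b}$ built from the Hermite--Halphen ansatz is elliptic and satisfies the fourth-order tensor product \eqref{4th-ode} of the two Lam\'e equations with $\alpha=B_a-B_b$, $\beta=B_a+B_b$. Writing the even part as a polynomial in $x=\wp(z)$ forces $(\alpha,\beta)$ to be a common zero of two coprime polynomials $G_1,G_0$ (Lemmas \ref{F0F1}, \ref{G1G2}), and a Bezout count matched against Proposition \ref{4lame} identifies all such zeros with pairs of Lam\'e functions of the same type (Proposition \ref{lame-pair}); the second proof instead shows the odd part of $q$ must vanish via the parity obstruction $4n(n+1)=(2l+3)(2l+5)$, giving $\{a_i,-b_i\}=\{-a_i,b_i\}$ directly. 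Either version works uniformly in $\tau$ and for singular $\bar X_n$, which is what the downstream results need. If you want to salvage your plan, you would have to actually carry out the $\Bbb C^\times$-limit computation \emph{and} supply a separate argument (or a uniform-in-$\tau$ identity such as the one above) to cover all $\tau$.
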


This means that $W_n(\z_n) = 0$, and conversely for general $\tau$ and $\sigma = \sigma_0 \in E_\tau$, the roots of $W_n(\bz)(\sigma_0; \tau) = 0$ are precisely those $\tfrac{1}{2} n(n + 1)$ values $\bz = \z_n(a)$ with $\sigma_n(a) = \sigma_0$. The proof is given in \S \ref{primitive}, Theorem \ref{main-thm}. 

A major tool used is the \emph{tensor product} of two Lam\'e equations $w'' = I_1 w$ and $w' = I_2 w$, where $I = n(n + 1) \wp(z)$, $I_1 = I + B_a$ and $I_2 = I + B_b$. For a general point $\sigma_0 \in E$, we need to show that the $\tfrac{1}{2} n (n + 1)$ points on the fiber of $\bar X_n \to E$ above $\sigma_0$ has distinct $\z_n$ values. From \eqref{def-zn}, it is enough to show that for $\sigma_n(a) = \sigma_n(b) = \sigma_0$, $\sum \zeta(a_i) = \sum \zeta(b_i)$ implies $B_a = B_b$. Since then $a = b$ if $\sigma_0 \not\in E[2]$. 

If $w_1'' = I_1 w_1$ and $w_2'' = I_2 w_2$, then the product $q = w_1 w_2$ satisfies the fourth order ODE (tensor product) given by
\begin{equation} \label{intro-4th-ode}
q'''' -2(I_1 + I_2) q'' -6I'q' + ((B_a - B_b)^2 - 2I'')q = 0.
\end{equation}
We remark that if $B_a = B_b$, then $I_1 = I_2$ and $q$ actually satisfies a third order ODE as the \emph{second symmetric product} of a Lam\'e equation. This is a useful tool in Part I \cite{CLW} in the study of the Lam\'e curve. 

If however $a \ne b$, by (\ref{ansatz}) and addition law, $q = w_a w_{-b} + w_{-a} w_b$ is an \emph{even elliptic function} solution to (\ref{intro-4th-ode}), namely a \emph{polynomial} in $x = \wp(z)$. This leads to strong constraints on (\ref{intro-4th-ode}) in variable $x$ and eventually leads to a contradiction for generic choices of $\sigma_0$.

Now we set (cf.~Corollary \ref{c:pre-mod})
\begin{equation} \label{new-pre-mod}
Z_n(\sigma; \tau) := W_n(Z)(\sigma; \tau).
\end{equation}
Then $Z_n(\sigma;\tau)$ is pre-modular of weight $\tfrac{1}{2} n(n + 1)$. From the construction and (\ref{y=Z}) it is readily seen that $Z_n(\sigma; \tau)$ is the generalization of the Hecke function we are looking for. In fact, for $n \ge 1$, we have 

\begin{theorem} \label{t:corr}
Solutions to the singular Liouville equation \eqref{Liouville-eq} correspond to zeros of pre-modular form $Z_n(\sigma; \tau)$ in \eqref{new-pre-mod} with $\sigma \not\in E_\tau[2]$.
\end{theorem}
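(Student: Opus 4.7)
The plan is to combine the Hermite--Halphen parametrization of MFE solutions by points of $X_n$ satisfying \eqref{e:G} with the primitive element characterization of $W_n$ from Theorem~\ref{t:primitive}, using identity \eqref{y=Z} as the bridge. The key observation is that whenever $\sigma_n(a) \ne 0$, equation \eqref{e:G} translates into the algebraic equation $\z_n(a) = Z(\sigma_n(a);\tau)$, and that Theorem~\ref{t:primitive} supplies the global identity $W_n(\z_n) \equiv 0$ on $\bar X_n$. Hence $Z_n(\sigma;\tau) = W_n(Z)(\sigma;\tau)$ vanishes at $\sigma_0 = \sigma_n(a)$ precisely when $Z(\sigma_0;\tau)$ matches one of the $\tfrac12 n(n+1)$ values $\{\z_n(a) : a \in \sigma_n^{-1}(\sigma_0)\}$, the roots of the minimal polynomial $W_n(\z)(\sigma_0;\tau)$.

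\emph{Forward direction.} Let $u^\lambda$ solve \eqref{Liouville-eq}. By \cite[Theorem~0.6]{CLW} the developing map furnishes $a \in X_n$ satisfying \eqref{e:G}; set $\sigma_0 := \sigma_n(a)$. I would first show $\sigma_0 \notin E_\tau[2]$: the description of $\bar X_n \setminus X_n$ as the union of the finite hyperelliptic branch points (with $\{a_i\} = \{-a_i\}$ as multisets) and $0^n$ recalled before \eqref{e:Ba}, together with the observation that $\sigma_n$ sends this locus into $E_\tau[2]$, gives $\sigma_n^{-1}(E_\tau[2]) \subset \bar X_n \setminus X_n$; contrapositively $a \in X_n$ forces $\sigma_0 \notin E_\tau[2]$. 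In particular $\sigma_0 \ne 0$, so \eqref{y=Z} converts \eqref{e:G} into $\z_n(a) = Z(\sigma_0;\tau)$; substituting into the identity $W_n(\z_n)(a) = 0$ yields $Z_n(\sigma_0;\tau) = 0$.

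\emph{Backward direction.} Suppose $Z_n(\sigma_0;\tau) = 0$ with $\sigma_0 \notin E_\tau[2]$. Then $Z(\sigma_0;\tau)$ is a root of $W_n(\z)(\sigma_0;\tau)$. By Theorems~\ref{deg-sigma} and~\ref{t:primitive}, $W_n$ is the minimal polynomial of the primitive generator $\z_n$ of $K(\bar X_n)/K(E)$, so its roots at $\sigma_0$ are exactly the fiber values $\{\z_n(a) : a \in \sigma_n^{-1}(\sigma_0)\}$. Pick $a$ in the fiber with $\z_n(a) = Z(\sigma_0;\tau)$. The containment $\sigma_n^{-1}(E_\tau[2]) \subset \bar X_n \setminus X_n$ used above, applied in the other direction, forces $a \in X_n$. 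The relation $\z_n(a) = Z(\sigma_0;\tau)$ with $\sigma_0 \ne 0$ is exactly \eqref{e:G} by \eqref{y=Z}, and \cite[Theorem~0.6]{CLW} produces an MFE scaling family whose developing map has zero set $a$. The main obstacle is the scheme-theoretic matching of roots of $W_n(\cdot;\sigma_0)$ with the fiber $\sigma_n^{-1}(\sigma_0)$ at singular or ramification values of $\sigma_0$ (recall that $\bar X_n$ can be nodal by Theorem~\ref{deg-sigma}, and $\z_n$ could a priori have poles); this is controlled by the integrality of $\z_n$ over the coordinate ring of $E^\times$ in Theorem~\ref{t:primitive}, which ensures $\z_n$ is regular away from $\sigma_n^{-1}(0)$ and that every root of $W_n(\cdot;\sigma_0)$ with $\sigma_0 \ne 0$ is realized by some $a$ in the fiber.
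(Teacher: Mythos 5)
The overall architecture of your argument (use \eqref{y=Z} to translate \eqref{e:G} into $\z_n(a) = Z(\sigma_n(a))$, then invoke the minimal-polynomial property of $W_n$ from Theorem~\ref{t:primitive}) is the same as the paper's, and your backward direction is essentially sound. But the forward direction rests on a false containment, and this is a genuine gap.

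You assert that $\sigma_n(\bar X_n \setminus X_n) \subset E_\tau[2]$ implies $\sigma_n^{-1}(E_\tau[2]) \subset \bar X_n \setminus X_n$. The first statement is true (branch points satisfy $a = -a$, hence $2\sigma_n(a) = 0$), but the second does not follow from it and is false for $n \ge 2$: the map $\sigma_n$ has degree $\tfrac12 n(n+1)$, so the fibers over the four $2$-torsion points contain $2n(n+1)$ points counted with multiplicity, while $\bar X_n \setminus X_n$ has only $2n+2$ points. So for $n\ge 2$ there are plenty of points of $X_n$ lying over $E_\tau[2]$, and "$a \in X_n$" alone does \emph{not} force $\sigma_n(a) \notin E_\tau[2]$. (Note also that if $\sigma_n(a)=0$ the identity \eqref{y=Z} is not even available, since both sides have poles, so you cannot bypass this point.) What is actually true, and what the theorem needs, is the statement of the paper's Lemma~\ref{trivial-corr}: if $a \in Y_n$ \emph{satisfies the Green equation} \eqref{e:G} and $\sigma_n(a) \in E_\tau[2]$, then $a$ must be a trivial (branch) point. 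The Green-equation hypothesis is essential, and the proof is not set-theoretic: one shows via \eqref{type-II} that the developing map $f$ would become an elliptic function on $E$, writes $f' = \sum_j E_j \wp(z+a_j) + C_1$ from its divisor, and derives $E_j = 0$ for all $j$ from the vanishing of the odd-order derivatives of $f$ at $0$ together with the nonsingularity of the Vandermonde-type matrix $(\wp^k(a_j))$ — a contradiction. Without this lemma (or an equivalent argument) your forward direction does not establish $\sigma_0 \notin E_\tau[2]$, and the correspondence claimed in the theorem is not proved. The same lemma is also what the paper uses, in part (ii) of Theorem~\ref{zero-pre-mod}, to conclude $\sigma_n(a)\notin E_\tau[2]$ for a nontrivial solution; your appeal to the containment cannot substitute for it there either.
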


We will also present a version of Theorem \ref{t:corr} in terms of \emph{monodromy groups} of Lam\'e equations (cf.~Theorem \ref{t:lame-mono}). 

For $\sigma \in E_\tau[N]$, the $N$-torsion points, the modular form $Z_2(\sigma; \tau)$ and $Z_3(\sigma; \tau)$ were first constructed by Dahmen \cite{Dahmen} in his study on integral Lam\'e equations (\ref{lame}) with algebraic solutions (i.e.~with finite monodromy group). For $n \ge 4$, the existence of a modular form $Z_n(\sigma; \tau)$ of weight $\tfrac{1}{2} n(n + 1)$ was also conjectured in \cite{Dahmen}. This is now settled by our results. 

It remains to find effective and explicit constructions of $Z_n$. Since $\sigma$ is defined by the addition map, which is purely algebraic, in principle this allows us to compute the polynomial $W_n(\bz)$ for any $n \in \Bbb N$ by eliminating variables $B$ and $C$, though in practice the needed calculations are very demanding and time consuming. 

In a different direction, the Lam\'e curve had also been studied extensively in the \emph{finite band integration theory}. In the complex case, this theory concerns about the eigenvalue problem on a second order ODE $L w := w'' - I w = Bw$ with eigenvalue $B$. The potential $I = I(z)$ is called a \emph{finite-gap (band) potential} if the ODE has only logarithmic free solutions except for a finite number of $B \in \Bbb C$. The integral Lam\'e equations (with $I(z) = n(n + 1)\wp(z)$) provide good (indeed earliest) examples of them. Using this theory, Maier \cite{Maier} had recently written down \emph{an explicit map} $\pi_n: \bar X_n \to E$ in terms of coordinate $(B, C)$ on $\bar X_n$ (in our notations). It turns out we can prove
\begin{theorem}[c.f.~Theorem \ref{pi=sigma}]
The map $\pi_n$ agrees with $\sigma_n: \bar X_n \to E$.
\end{theorem}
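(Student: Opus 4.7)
The plan is a direct comparison on the dense open subset $X_n \subset \bar X_n$, followed by extension to the full curve by algebraicity. First I would recall Maier's explicit formula for $\pi_n(B, C)$, which is constructed from the spectral data of the Baker--Akhiezer function of the Lam\'e operator $L = -\partial_z^2 + n(n+1)\wp(z)$ at spectral value $B$, expressed purely in the hyperelliptic coordinates on $\bar X_n$. The goal is to reinterpret this formula intrinsically as the translational part of the Floquet exponent attached to $L - B$.

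Next, on the unbranched locus $X_n$ each point $[a] = \{a_1, \ldots, a_n\}$ is characterized (Theorem 0.7 of \cite{CLW}) by the fact that the Hermite--Halphen ansatz $w_a$ of \eqref{ansatz} is the (essentially unique) Floquet solution of $L w = B_a w$. Using the quasi-periodicity relation $\sigma(z + \omega_j;\tau) = -e^{\eta_j(z + \omega_j/2)}\sigma(z;\tau)$, a routine computation yields
\begin{equation*}
w_a(z + \omega_j) = \exp\Big(\omega_j\sum_{i=1}^n \zeta(a_i) - \eta_j \sum_{i=1}^n a_i\Big)\, w_a(z), \qquad j = 1, 2.
\end{equation*}
Via the Legendre relation $\omega_1 \eta_2 - \omega_2 \eta_1 = 2\pi i$, the pair of Floquet multipliers modulo the contribution of $\sum \zeta(a_i)$ recovers the class of $\sum a_i$ in $E$, i.e.\ precisely $\sigma_n(a)$. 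Maier's construction of $\pi_n$ is designed to read off exactly this translational invariant of the Baker--Akhiezer function from $(B, C)$, so a matching of formulas gives $\pi_n(B_a, C_a) = \sigma_n(a)$ on all of $X_n$.

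Finally, since $\bar X_n$ is irreducible (Theorem \ref{hyper-thm}(3)) and $X_n$ is Zariski dense in $\bar X_n$, the equality of the two algebraic morphisms $\pi_n, \sigma_n: \bar X_n \to E$ on $X_n$ forces $\pi_n = \sigma_n$ globally.

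The main obstacle is the careful matching of normalizations between Maier's formula and the Floquet-translational invariant of the ansatz. The spectral data see $a$ only as the unordered set $\{a_i\}$, which gives $C$ only up to sign, so Maier's map is a priori defined only up to a translation by a point of $E[2]$. Fixing this ambiguity requires verifying the equality at a single explicit point, most conveniently the point at infinity $0^n \in \bar X_n$ where $w_a$ degenerates and both $\pi_n$ and $\sigma_n$ send $0^n$ to $0 \in E$. Once this base point is secured, the rest of the argument is a formal transcription between Maier's spectral language and the addition map on ${\rm Sym}^n E$.
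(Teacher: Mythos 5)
Your overall strategy is the paper's: both arguments hinge on the fact that the Hermite--Halphen ansatz \eqref{ansatz} and the Hermite--Krichever/Baker--Akhiezer expression \eqref{HK} represent the \emph{same} Floquet solution of the Lam\'e equation, so their quasi-periodicity data must agree. But your write-up outsources the decisive step to the sentence ``Maier's construction of $\pi_n$ is designed to read off exactly this translational invariant \dots so a matching of formulas gives $\pi_n(B_a,C_a)=\sigma_n(a)$.'' That is precisely the content of the theorem, not something Maier's Theorem \ref{t:xyk} hands you: Maier defines $\pi_n$ through the point $a_0$ appearing in the ansatz \eqref{HK}, and gives $\wp(a_0),\wp'(a_0),\kappa$ as explicit rational functions of $(B,C)$. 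To close the gap you must actually compute the multipliers of \eqref{HK}, namely $\psi(z+\omega_j)=e^{(\zeta(a_0)+\kappa)\omega_j-\eta_j a_0}\,\psi(z)$, equate them with the multipliers $e^{\omega_j\sum\zeta(a_i)-\eta_j\sum a_i}$ of $w_a$ that you did compute, and invoke the Legendre relation to extract $a_0\equiv\sum a_i \pmod{\Lambda_\tau}$ together with $\kappa=\sum\zeta(a_i)-\zeta(a_0)$. The paper performs exactly this comparison (via proportionality of the two expressions) and in addition records the byproduct $\kappa=-\z_n$, which is used later in \S\ref{resultant}; as stated, your argument assumes what is to be proved.

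Your treatment of the residual ambiguity is also off. Replacing $C$ by $-C$ corresponds to the hyperelliptic involution $a\mapsto -a$, hence to $a_0\mapsto -a_0$; this is an involution fixing $E[2]$, not ``a translation by a point of $E[2]$,'' and in any case $C$ is honestly defined on $\bar X_n$ by \eqref{e:C(a)}, not merely up to sign. More importantly, your proposed fix --- checking the normalization at the point at infinity $0^n$ --- cannot distinguish $a_0$ from $-a_0$, since $0=-0$ in $E$; a $2$-torsion value of $\sigma_n$ is blind to exactly the ambiguity you are worried about. The paper instead pins down the (additive) constant by evaluating at a finite branch point with $\sigma_n(a)=\tfrac12\omega_1$ supplied by Proposition \ref{4lame}, where $C=0$ forces $\kappa=0$ by \eqref{xyk} and $\z_n=0$ trivially. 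If you want to rule out a sign discrepancy between the two sheet labelings, you must test at a point where $\sigma_n(a)\notin E_\tau[2]$, or avoid the issue altogether by deriving the identity directly from the proportionality $w_a\propto\psi$ at a fixed point $(B,C)$, as the paper does.
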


This provides an alternative way to compute $W_n(\bz)$ by eliminating $B$, $C$, and \S \ref{resultant} is devoted to this explicit construction. In particular the weight 10 pre-modular form $Z_4(\sigma; \tau)$ is explicitly written down (c.f.~Example \ref{ex:W4}).

The existence and effective construction of $Z_n(\sigma; \tau)$ opens the door to extend our complete results on \eqref{Liouville-eq} for $n = 1$ (established in \cite{LW, LW4, CKLW}) to general $n \in \Bbb N$. As a related application, the explicit expression of $Z_4$ is used to solve Dahmen's conjecture on a counting formula for Lam\'e equations \eqref{lame} with finite monodromy for $n = 4$. The method works for general $n$ once $Z_n$ is shown to have expected asymptotic behavior at cusps. The details is written by Y.-C.\ Chou and is included in Appendix \ref{Chou}.

\section{Geometry of $\sigma_n: \bar X_n \to E$} \label{geom-Xn}
\setcounter{equation}{0}

The aim of this section is to prove Theorem \ref{deg-sigma}. We first review and extend some technical details on results from \cite{CLW} quoted in \S \ref{s:intro}. 

\begin{proposition} \cite[Theorem 6.5]{CLW} \label{typeII}{\ }
Let $a_1, \cdots, a_n$ be the zeros of a developing map $f$ for equation \eqref{Liouville-eq}. Then the logarithmic derivative $g = f'/f$ is given by 
\begin{equation} \label{g=logf}
\begin{split}
g(z) = \sum_{i = 1}^n \frac{\wp'(a_i)}{\wp(z) - \wp(a_i)}.
\end{split}
\end{equation}
Moreover, $g(z)$ has ${\rm ord}_{z = 0}\, g(z) = 2n$, and $a_i \not\in E[2]$, $a_i \ne \pm a_j$ for $i \ne j$.
\end{proposition}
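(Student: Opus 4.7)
The approach is to exploit the Hermite--Halphen representation of the developing map. The structure theorem proved in Part I identifies any such $f$ (after the $PSU(2)$ ambiguity and a rescaling) with the ratio $w_a/w_{-a}$ built from the ansatz (\ref{ansatz}), where $a = (a_1,\dots,a_n)$ is precisely the list of zeros. Granting this, formula (\ref{g=logf}) is obtained by direct computation with Weierstrass function identities, and the remaining assertions follow by inspection.

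First I would logarithmically differentiate (\ref{ansatz}) to get
\begin{equation*}
\frac{w_a'(z)}{w_a(z)} = \sum_{i=1}^n \zeta(a_i) + \sum_{i=1}^n \zeta(z-a_i) - n\,\zeta(z),
\end{equation*}
and a parallel expression for $w_{-a}$ with $a_i$ replaced by $-a_i$ (using the oddness $\zeta(-a)=-\zeta(a)$). Subtracting, the $n\zeta(z)$ contributions drop out and one obtains
\begin{equation*}
g(z) = 2\sum_{i=1}^n \zeta(a_i) + \sum_{i=1}^n\bigl[\zeta(z-a_i) - \zeta(z+a_i)\bigr].
\end{equation*}
The classical addition formula rearranges to
\begin{equation*}
\zeta(z-a) - \zeta(z+a) = -2\,\zeta(a) + \frac{\wp'(a)}{\wp(z)-\wp(a)},
\end{equation*}
so on summation the $-2\zeta(a_i)$ pieces cancel the leading $2\sum\zeta(a_i)$, and what remains is the asserted formula.

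Next I would compute the order of vanishing at $z=0$. Inserting the Laurent expansion $\wp(z) = z^{-2} + \frac{g_2}{20}z^2 + \frac{g_3}{28}z^4 + \cdots$ into each summand and expanding as a geometric series yields
\begin{equation*}
\frac{1}{\wp(z)-\wp(a_i)} = \sum_{r\ge 0} p_r(\wp(a_i))\,z^{2r+2},
\end{equation*}
where $p_r(X) \in \Bbb Q[g_2,g_3][X]$ is a monic polynomial of degree $r$. By the resulting triangular linear system, vanishing of the coefficient of $z^{2r+2}$ in $g$ for $r = 0,1,\dots,n-2$ is equivalent to $\sum_i \wp'(a_i)\wp(a_i)^r = 0$ for those same $r$ --- precisely the system (\ref{e:P}) defining $X_n$. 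This gives $\mathrm{ord}_{z=0}\,g \ge 2n$, and equality then follows from a Vandermonde argument: otherwise $\sum_i \wp'(a_i)\,q(\wp(a_i)) = 0$ for every polynomial $q$ of degree $\le n-1$, which by distinctness of the $\wp(a_i)$ would force each $\wp'(a_i) = 0$, i.e.\ each $a_i \in E[2]$, contradicting the forthcoming non-degeneracy.

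Finally, the non-degeneracy assertions are read off from (\ref{ansatz}) directly. If $a_i \in E[2]$, so that $-a_i \equiv a_i \pmod{\Lambda}$, then $w_a$ and $w_{-a}$ share a common factor $\sigma(z-a_i)$ which cancels in $f$; analogously, if $a_i = -a_j$ for some $i \ne j$, the factor $\sigma(z-a_i) = \sigma(z+a_j)$ sits in both $w_a$ and $w_{-a}$ and cancels in $f$. In either case $a_i$ ceases to be a zero of $f$, contradicting the hypothesis, while $a_i \ne a_j$ for $i \ne j$ is needed to keep the zero of $f$ at $a_i$ simple. The main obstacle in making the whole argument self-contained is the initial identification $f = w_a/w_{-a}$; this rests on recognizing type II (\ref{type2}) as equivalent to (\ref{e:G}), which is the substance of [CLW, Theorem 6.5].
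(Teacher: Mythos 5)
Your reconstruction is essentially correct, but note first that the paper does not actually reprove this statement: it is imported from \cite{CLW} (Theorem 6.5 there), and the text following Proposition \ref{typeII} only extracts two consequences of it. Measured against that material, your computations are the paper's own, run in the opposite direction: the paper starts from \eqref{g=logf} and integrates, via $\frac{\wp'(a)}{\wp(z)-\wp(a)} = 2\zeta(a)-\zeta(a-z)-\zeta(a+z)$, to reach $f = e^{2\sum\zeta(a_i)z}\prod\sigma(z-a_i)/\sigma(z+a_i)$, whereas you start from $f = w_a/w_{-a}$ and differentiate using the same (correctly stated) addition formula; likewise your triangular expansion of $(\wp(z)-\wp(a_i))^{-1}$ in powers of $z$ is exactly the paper's geometric-series expansion in $1/w$. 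Your cancellation argument for the non-degeneracy $a_i\notin E[2]$, $a_i\ne\pm a_j$ is also in the spirit of the source.

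The one place where your logic runs against the grain is the order statement. In the paper ${\rm ord}_{z=0}\,g=2n$ is the primitive fact --- it comes from the conical singularity of $u$ at $0$, or, once $f=w_a/w_{-a}$ is granted, from the Wronskian identity $f'=C/w_{-a}^{2}$ together with ${\rm ord}_{z=0}\,w_{-a}=-n$ and $f(0)=(-1)^n\ne 0,\infty$ (the paper uses exactly this two-line argument in the proof of Lemma \ref{l:c-corr}) --- and \eqref{e:P} is \emph{deduced} from it. You instead deduce ${\rm ord}\,g\ge 2n$ from \eqref{e:P}, which you never establish; it is available only because the structure theorem you import already asserts $[a]\in X_n$ exactly when $w_{\pm a}$ solve the Lam\'e equation, and this must be said to avoid circularity. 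Similarly, your Vandermonde step for the equality ${\rm ord}\,g=2n$ invokes the distinctness of the $\wp(a_i)$ and $a_i\notin E[2]$, which are among the conclusions; since your proof of these is independent of the order computation, the repair is only to establish the non-degeneracy first, but as written the order of dependence is inverted. With these reorderings (or with the Wronskian shortcut, which gives the exact order at once) the argument is sound.
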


\emph{The condition ${\rm ord}_{z = 0}\, g(z) = 2n$ leads to the $n - 1$ equations for $a_1, \ldots, a_n$ given in \eqref{e:P}}: Under the notations $(w, x_j, y_j) = (\wp(z), \wp(p_j),
\wp'(p_j))$,
\begin{equation*}
\begin{split}
g(z) &= \sum_{j = 1}^n \frac{1}{w} \frac{y_j}{1 - x_j/w} \\
&= \sum_{j = 1}^n \frac{y_j}{w} + \sum_{j = 1}^n \frac{y_j
x_j}{w^2} + \cdots + \sum_{j = 1}^n \frac{y_j x_j^r}{w^{r + 1}} +
\cdots.
\end{split}
\end{equation*}
Since $g(z)$ has a zero at $z = 0$ of order $2n$ and $1/w$ has a
zero at $z = 0$ of order two, we get $x_i \ne x_j$ for $i \ne j$ and
\begin{equation} \label{poly-sys}
\sum y_i x_i^r = 0, \qquad r = 0, \ldots, n - 2.
\end{equation} 
This, together with the Weierstrass equation $y_i^2 = 4 x_i^3 - g_2 x_i - g_3$, gives the polynomial system describing the developing maps. 

\emph{The Green equation \eqref{e:G} is equivalent to the type II condition \eqref{type2}}: The argument is essentially contained in \cite[Lemma 2.4]{LW2}.
By the addition law,
\begin{equation*}
\begin{split}
f &= \exp \int g\,dz \\
&= \exp \int \sum_{i = 1}^n (2 \zeta(a_i) -
\zeta(a_i - z) -
\zeta(a_i + z))\,dz\\
&= e^{2\sum_{i = 1}^n \zeta(a_i)z} \prod_{i = 1}^n \frac{\sigma(z
- a_i)}{\sigma(z + a_i)}.
\end{split}
\end{equation*}
We then calculate the monodromy effect on $f$ from 
\begin{equation} \label{e:sigma-trans}
\sigma(z + \omega_j) = - e^{\frac{1}{2} \eta_i (z + \frac{1}{2} \omega_j)} \sigma(z), \qquad j = 1, 2.
\end{equation}
Let $a_i = r_i \omega_1 + s_i \omega_2$ for $i = 1, \ldots, n$. By way of the Legendre relation $\eta_1 \omega_2 - \eta_2 \omega_1 = 2\pi i$ we compute easily that 
\begin{equation} \label{type-II}
\begin{split}
f(z + \omega_1) &= e^{-4\pi i\sum_i s_i + 2\omega_1(\sum\zeta(a_i) - \sum r_i \eta_1 - \sum s_i \eta_2)} f(z), \\
f(z + \omega_2) &= e^{4\pi i \sum_i r_i + 2\omega_2(\sum\zeta(a_i) - \sum r_i \eta_1 - \sum s_i \eta_2)} f(z).
\end{split}
\end{equation}
By \eqref{e:Gz}, the equivalence of \eqref{e:G} and \eqref{type2} follows immediately. 
\smallskip

The Liouville curve $X_n \subset {\rm Sym}^n E$, defined by \eqref{poly-sys} or \eqref{e:P}, has a hyperelliptic structure under the 2 to 1 map $X_n \to \Bbb P^1$, $(x_i, y_i)_{i = 1}^n \mapsto (2n - 1) \sum_{i = 1}^n x_i$ as in \eqref{e:Ba}. This is closely related to the integral Lam\'e equations \eqref{lame} since $f = w_a/w_{-a}$ where $w_a$ is the ansatz solution \eqref{ansatz}.

The full information on the compactified hyperelliptic curve $\bar X_n \to \Bbb P^1$, the Lam\'e curve, especially on the branch points added, is described by 

\begin{theorem} \cite[Theorem 0.7]{CLW} \label{hyper-thm} {\ }
\begin{itemize}
\item[(1)] 
The natural compactification $\bar X_n \subset {\rm Sym}^n E$ coincides with the, possibly singular, projective model of the hyperelliptic curve defined by
\begin{equation}
\begin{split}
C^2 &= \ell_n(B, g_2, g_3) \\
&= 4B s_n^2 + 4g_3 s_{n - 2} s_n - g_2 s_{n - 1} s_n - g_3 s_{n - 1}^2,
\end{split}
\end{equation}
in $(B, C)$, where $s_k = s_k(B, g_2, g_3) = r_k B^k + \cdots \in \Bbb Q[B, g_2, g_3]$, is a recursively defined polynomial of homogeneous degree $k$ with $\deg g_2 = 2$, $\deg g_3 = 3$, and $B = (2n - 1) s_1$. 

\item[(2)]
$\deg \ell_n = 2n + 1$ and $\bar X_n$ has arithmetic genus $g = n$. 

\item[(3)]
 The curve $\bar X_n$ is smooth except for a finite number of $\tau$, namely the discriminant loci of $\ell_n(B, g_2, g_3)$ so that $\ell_n(B)$ has multiple roots. $\bar X_n$ is an irreducible curve which is smooth at infinity.

\item[(4)] 
The $2n + 2$ branch points $a \in \bar X_n \setminus X_n$ are characterized by $-a = a$. In fact $\{-a_i\} \cap \{a_i\} \ne \emptyset \Rightarrow -a = a$. Also $0 \in \{a_i\} \Rightarrow a = (0, 0, \cdots, 0)$.

\item[(5)]
The limiting system of (\ref{poly-sys}) at $a = 0^n$ is given by
\begin{equation} \label{lim-eq}
\sum_{i = 1}^n t_i^{2r + 1} = 0, \quad r = 1, \ldots, n - 1
\end{equation} 
under the non-degenerate constraints
$t_i \ne 0$, $t_i \ne -t_j$.
Moreover, \eqref{lim-eq} has a unique non-degenerate solution in $\Bbb P^{n - 1}$ up to permutations. It gives the tangent direction $[t] \in \Bbb P (T_{0^n}(\bar X_n)) \subset \Bbb P(T_{0^n}({\rm Sym}^n E))$.

\item[(6)]
In terms of $a \in Y_n$, $(B, C)$ can be parameterized by $B(a) = B_a$ and
\begin{equation} \label{e:C(a)}
C(a) = \wp'(a_i) \prod_{j \ne i} (\wp(a_i) - \wp(a_j)), \quad \mbox{for any $i = 1, \ldots, n$}.
\end{equation} 
\end{itemize}
\end{theorem}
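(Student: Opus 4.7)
The core strategy is to exploit the Hermite--Halphen framework: for each point $a \in X_n$ satisfying \eqref{poly-sys}, both $w_a$ and $w_{-a}$ from \eqref{ansatz} solve the Lamé equation \eqref{lame} with the common accessory parameter $B = B_a = (2n-1)\sum\wp(a_i)$ of \eqref{e:Ba}. This assigns a ``$B$-coordinate'' to every point of $X_n$, producing the 2-to-1 projection $X_n \to \Bbb A^1$. I would then introduce the second coordinate $C$ via a Wronskian-type invariant: a direct (if tedious) computation of $W(w_a, w_{-a})$ — or equivalently of the leading Laurent coefficient of $w_a w_{-a}$ at $z = 0$ — gives
\[
C(a) = \wp'(a_i)\prod_{j \ne i}(\wp(a_i) - \wp(a_j)),
\]
for each $i$. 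Independence of the choice of $i$ is the non-trivial content and follows from \eqref{poly-sys}; this establishes (6).

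To prove (1)--(2) I would square $C(a)$ and, after eliminating $\wp'(a_i)^2$ via $\wp'(a_i)^2 = 4\wp(a_i)^3 - g_2 \wp(a_i) - g_3$, observe that the expression becomes a symmetric polynomial in the $\wp(a_i)$'s with coefficients in $\Bbb Q[g_2, g_3]$. The relations \eqref{poly-sys} — $n-1$ linear constraints on $(y_1, \ldots, y_n)$ — combined with Newton's identities, recursively express all power sums $\sum \wp(a_i)^k$ in terms of the single parameter $B$, producing the weighted-homogeneous polynomials $s_k(B, g_2, g_3)$ and the identity $C^2 = \ell_n(B, g_2, g_3)$. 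A weight count in $(g_2, g_3, B)$ of degrees $(2, 3, 1)$ pins down $\deg \ell_n = 2n+1$, and standard hyperelliptic formulas then give arithmetic genus $n$.

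Parts (3)--(5) are more geometric. Irreducibility of $\bar X_n$ reduces to showing that $\ell_n(B)$ is not a perfect square in $\Bbb Q(\tau)[B]$; genericity of the $2n+1$ simple roots (i.e.\ $\ell_n$ having nonzero discriminant outside a discrete $\tau$-locus) ensures smoothness of the affine part. For smoothness at infinity $0^n$, I would use the rescaling $a_i = \epsilon t_i$ and expand $\wp, \wp'$ as Laurent series around $0$; the leading-order terms of \eqref{poly-sys} yield the system \eqref{lim-eq}, whose uniqueness (up to $S_n$) of a non-degenerate solution $[t] \in \Bbb P^{n-1}$ is a purely combinatorial fact about odd power sums. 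This simultaneously establishes smoothness at $0^n$ and furnishes the tangent direction, giving (5). For (4), fixed points of $a \mapsto -a$ satisfy $\{a_i\} = \{-a_i\}$ as multisets; since Proposition \ref{typeII} gives $a_i \ne \pm a_j$ for $i \ne j$ at smooth points, each $a_i$ either lies in $E[2] \setminus \{0\}$ or is paired uniquely with some $-a_j$. A straightforward count over the three non-trivial 2-torsion points together with the pairings gives exactly $2n+2$ such configurations, matching the branch-point count.

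The main obstacle I foresee is constructing the recursive polynomials $s_k$ and proving the precise shape of $\ell_n$: the polynomial system supplies only $n-1$ relations, leaving $s_1 = \sum\wp(a_i)$ as a genuine parameter, and verifying that the resulting closed form for $C^2$ really is $4Bs_n^2 + 4g_3 s_{n-2}s_n - g_2 s_{n-1}s_n - g_3 s_{n-1}^2$ likely requires a clever reorganization — perhaps via discriminants of $\prod(w - \wp(a_i))$ or a Padé/continued-fraction interpretation of the Hermite--Halphen ansatz. Everything else either follows from standard hyperelliptic geometry or from the explicit local analysis at $0^n$ once the affine equation is in hand.
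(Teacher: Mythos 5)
First, a point of provenance: Theorem \ref{hyper-thm} is quoted verbatim from Part I \cite[Theorem 0.7]{CLW} and is \emph{not} proved in this paper; the only things established here are the derivation of \eqref{poly-sys} from ${\rm ord}_{z=0}\,g=2n$, the identification of $C(a)$ as the constant numerator of $g_a(z)$ (which is exactly your route to part (6) and to the independence of $i$), and the refinement in Theorem \ref{t:nodal}. So there is no in-paper proof to match yours against; your proposal has to stand on its own as a reconstruction of the Part I argument. Parts of it do: the $\epsilon$-rescaling $a_i=\epsilon t_i$ with the Laurent expansion of $\wp,\wp'$ is the right way to obtain the limiting system \eqref{lim-eq}, and the characterization of branch points as fixed points of $a\mapsto -a$ is correct since the hyperelliptic involution is induced by $a\mapsto -a$ (as $C(-a)=-C(a)$).

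The genuine gaps are concentrated where you yourself flag ``the main obstacle,'' and the mechanism you propose there would not close it. The relations \eqref{poly-sys} are $n-1$ \emph{linear} constraints on the $y_i=\wp'(a_i)$, not on power sums of the $x_i=\wp(a_i)$; Newton's identities do not convert them into a recursion expressing $\sum x_i^k$ in terms of $B$ alone, and indeed the $s_k$ are not power sums but the elementary symmetric functions of the $x_i$, i.e.\ the coefficients of $\Phi(x):=\prod_i(x-\wp(a_i))$. The derivation that actually works (classical, and the one used in \cite{CLW}) is to observe that $\Phi=w_aw_{-a}$ is an even elliptic function solving the \emph{second symmetric power} of the Lam\'e equation, $\Phi'''-4(n(n+1)\wp+B)\Phi'-2n(n+1)\wp'\Phi=0$; rewriting this ODE in the variable $x=\wp(z)$ yields the recursion defining $s_k(B,g_2,g_3)$ (with $B=(2n-1)s_1$), and the constant first integral $\tfrac12\Phi'{}^2-\Phi\Phi''+2(n(n+1)\wp+B)\Phi^2=W(w_a,w_{-a})^2$ evaluates to exactly $4Bs_n^2+4g_3s_{n-2}s_n-g_2s_{n-1}s_n-g_3s_{n-1}^2$. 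Without this step you get neither the closed form of $\ell_n$ nor $\deg\ell_n=2n+1$, and hence neither the genus, the irreducibility (which does follow cheaply once the degree is known to be odd), nor the count of $2n+2$ branch points. Two further assertions are left unsupported: that the system \eqref{lim-eq} has a \emph{unique} non-degenerate projective solution up to permutation (this is a nontrivial algebraic fact, not a routine combinatorial one), and that there are exactly $2n+1$ finite configurations with $\{a_i\}=\{-a_i\}$ on $X_n$ --- your ``straightforward count over the 2-torsion points'' counts candidate shapes, not solutions of \eqref{poly-sys}; the actual count is the classical enumeration of Lam\'e functions recorded in Proposition \ref{4lame} and requires the recursion for $\ell_n$ (or an equivalent argument) to establish.
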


The smooth point $a = 0^n \in \bar X_n$ is referred as \emph{the point at infinity}. For the other $2n + 1$ \emph{finite branch points} with $a = -a$, the ansatz solution \eqref{ansatz} $w_a = w_{-a}$ is still a solution to the Lam\'e equation. In the literature, these $2n + 1$ functions are known as the \emph{Lam\'e functions}. 

Notice that \eqref{e:C(a)} arises from \eqref{g=logf} and ${\rm ord}_{z = 0}\, g_a(z) = 2n$ in
\begin{equation*}
g_a(z) := \sum_{i = 1}^n \frac{\wp'(a_i)}{\wp(z) - \wp(a_i)} = \frac{\sum_{i = 1}^n \wp'(a_i) \prod_{j \ne i} (\wp(z) - \wp(a_j))}{\prod_{i = 1}^n (\wp(z) - \wp(a_i))},
\end{equation*}
where the numerator reduces to the constant $C(a)$. By working with \eqref{e:C(a)}, we may say a little more on the possible singularities of $\bar X_n(\tau)$:

\begin{theorem} \label{t:nodal}
$\bar X_n$ has at most nodal singularities. That is, $\ell_n(B)$ has at most double roots. At such a point $a \in Y_n \setminus X_n$, both local branches are smooth and $C$ could be used as a local coordinate.
\end{theorem}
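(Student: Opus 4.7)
The plan is to work with the hyperelliptic model $C^2 = \ell_n(B)$ from Theorem \ref{hyper-thm}(1). Any affine singularity of this model occurs at a double point $(B_0, 0)$ with $\ell_n(B_0) = \ell_n'(B_0) = 0$; let $m \ge 2$ denote the multiplicity of $B_0$ as a root of $\ell_n$. By Theorem \ref{hyper-thm}(3) such points lie in $Y_n$, and by Theorem \ref{hyper-thm}(4) the corresponding $a_0 \in Y_n \setminus X_n$ satisfies $a_0 = -a_0$ as a multiset. The goal is to show $m \le 2$ and that the two resulting local branches are smooth with $C$ as a local coordinate.

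The central step is to verify that along any local analytic branch of $\bar X_n$ through $a_0$, parametrized by an analytic map $a(t)$ with $a(0) = a_0$ and $t$ a uniformizer, the function $C(a(t))$ from Theorem \ref{hyper-thm}(6) vanishes to order exactly one at $t = 0$. Combinatorially, $a_0$ has (at least) one of two features: (i) some entry $a_{0,i}$ is a nontrivial $2$-torsion point $e_k$, so $\wp'(a_{0,i}) = 0$, or (ii) there is a pairing $a_{0,j} = -a_{0,i}$ with $a_{0,i} \notin E[2]$, so $\wp(a_{0,i}) - \wp(a_{0,j}) = 0$. In either case, the formula $C(a) = \wp'(a_i) \prod_{j \ne i}(\wp(a_i) - \wp(a_j))$ vanishes through a single explicit factor, and I would compute $(dC/dt)(0) \ne 0$ by differentiating this factor using the nonvanishing of $\wp''(e_k)$ in case (i) or of $\wp'(a_{0,i})$ in case (ii), together with the constraint that $a'(0)$ lies in the tangent cone to $\bar X_n$ prescribed by the defining equations \eqref{e:P} of $X_n$.

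Given $\mathrm{ord}_{t} C(a(t)) = 1$ on each local branch, the relation $C^2 = \ell_n(B)$ forces $2 = k m$ with $k = \mathrm{ord}_t(B(a(t)) - B_0) \ge 1$, so $(k, m) \in \{(2, 1), (1, 2)\}$. The case $(2,1)$ is a smooth ramification point of the $2$-to-$1$ hyperelliptic cover, not a singularity. In the case $(1, 2)$ we have $C^2 = (B - B_0)^2 h(B)$ with $h(B_0) \ne 0$, hence $\bar X_n$ decomposes locally into two smooth analytic branches $C = \pm (B - B_0)\sqrt{h(B)}$ meeting transversally at $(B_0, 0)$ with distinct tangent slopes $\pm \sqrt{h(B_0)}$; on each branch $C$ serves as a local coordinate. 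This is precisely a node.

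The main obstacle is the order-one vanishing of $C(a(t))$ in the second step, specifically verifying that the tangent vector $a'(0)$ does not accidentally annihilate the leading-order contribution to $C$. One must combine the combinatorial structure of $a_0$ with a concrete description of the tangent cone to $\bar X_n$ obtained from \eqref{e:P} (or, equivalently, from the Hermite--Halphen ansatz \eqref{ansatz} requiring $w_{a(t)}$ to solve a Lam\'e equation for each $t$). A further subtlety is to treat each of the (up to two) local branches separately and simultaneously ensure the nonvanishing on both; here the irreducibility of $\bar X_n$ from Theorem \ref{hyper-thm}(3) and the fact that $C$ distinguishes the two hyperelliptic sheets away from ramification provide the needed control.
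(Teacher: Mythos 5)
Your skeleton coincides with the paper's: use the explicit formula \eqref{e:C(a)} to show that $C$ vanishes to order exactly one along every local branch of $\bar X_n$ through a finite branch point, then read off the singularity type from the plane model $C^2=(B-B_0)^m h(B)$ with $h(B_0)\ne 0$. Your final reduction ($2\cdot\mathrm{ord}_t C = m\,\mathrm{ord}_t(B-B_0)$, hence $(k,m)\in\{(2,1),(1,2)\}$, the case $(1,2)$ being two smooth branches $C=\pm(B-B_0)\sqrt{h(B)}$ with $C$ a coordinate on each) is correct and is in fact spelled out more carefully than in the paper. The two combinatorial cases you isolate (a $2$-torsion entry of $a_0$, versus a pair $a_{0,j}=-a_{0,i}$ with $a_{0,i}\notin E[2]$) are exactly the paper's two cases.

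The gap is the one you flag yourself: you never establish that $C(a(t))$ vanishes to order exactly one, i.e.\ that the tangent direction of the branch does not annihilate the single vanishing factor of $C$. This is the entire content of the theorem --- everything else is bookkeeping --- and ``differentiate the factor \ldots together with the constraint that $a'(0)$ lies in the tangent cone to $\bar X_n$ prescribed by \eqref{e:P}'' is a plan, not an argument. Moreover that plan risks circularity: describing the tangent cone of the locus cut out by the $n-1$ equations \eqref{e:P} at a point where the Jacobian may drop rank is essentially the same problem as determining the singularity type. The paper never touches \eqref{e:P} in this proof. It instead exploits the freedom of the index $i$ in \eqref{e:C(a)}: choosing $i$ with $a_i\in E[2]$ gives $C(b)=\bigl[\wp''(a_i)\prod_{j\ne i}(\wp(a_i)-\wp(a_j))\bigr](b_i-a_i)+o(|b_i-a_i|)$, while choosing $i$ paired with $i'$ (where $a_{i'}=-a_i\notin E[2]$) gives $C(b)=\bigl[\wp'(a_i)^2\prod_{j\ne i,i'}(\wp(a_i)-\wp(a_j))\bigr](b_i+b_{i'})+o(\cdot)$; in the paired case the dangerous cancellation $b_i'(0)+b_{i'}'(0)=0$ is excluded by the hyperelliptic symmetry $b_{i'}(C)=-b_i(-C)$, which forces $b_{i'}'(0)=b_i'(0)$. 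To complete your write-up you must either reproduce this symmetry argument (and justify that $C$ thereby parametrizes each branch) or genuinely carry out the tangent-cone computation; as written, the central step is missing.
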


\begin{proof}
Denote by $b = \{b_1, \cdots, b_n\} \in X_n$ a point near the branch point $a = \{a_1, \cdots, a_n\} \in Y_n \setminus X_n$. \eqref{e:C(a)} implies that, for $a_i = -a_i$ (2-torsion) in $E$,
\begin{equation*} 
C(b) = \Big[\wp''(a_i) \prod\nolimits_{j \ne i} (\wp(a_i) - \wp(a_j)) \Big] (b_i - a_i) + o(|b_i - a_i|).
\end{equation*}
Since $\wp''(a_i) \ne 0$, $C$ can be used as a parameter and $b_i'(0) \ne 0, \infty$. 

Similarly for $a_i$ not a 2-torsion point, we denote by $a_{i'} = -a_i$ and get
$$
C(b) = \Big[\wp'(a_i)^2 \prod\nolimits_{j \ne i, i'} (\wp(a_i) - \wp(a_j)) \Big] (b_i + b_{i'}) + o(|b_1 + b_2|).
$$
Since $\wp'(a_i) \ne 0$, $C$ can be used as a parameter and $b_i'(0) + b_{i'}'(0) \ne 0, \infty$. Again, from \eqref{e:C(a)} we deduce that $b_{i'}(C) = -b_i(-C)$. So $b_{i'}'(0) = b_i'(0)$ and hence they are neither $0$ nor $\infty$. 

In summary, the paramaterization $C \mapsto b(C)$ is well defined, holomorphic and non-degenerate in any chosen branch of $Y_n$ near $a = b(0)$. Since the analytic structure at $a \in Y_n$ is of the form $C^2 = (B - \lambda)^m$, this is possible if and only if $m = 1, 2$. The singular case corresponds to $m = 2$ which leads to a double point. The two branches are all non-singular at $a$.
\end{proof}

There are four species of Lame functions, depending on the number of half periods contained in $\{a_i\}$. We call them being of type O, I, II, and III respectively. For $n = 2k$ being even, $a$ must be of type O or II. For $n = 2k + 1$ being odd, $a$ must be of type I or III. There are factorizations of the polynomial $\ell_n(B)$ according to the types:

\begin{proposition} \cite{Halphen, Whittaker} \label{4lame}
In terms of $e_i = \wp(\tfrac{1}{2}\omega_i)$, we may write
$$
\ell_n(B) = c_n^2 l_0(B) l_1(B) l_2(B) l_3(B),
$$ 
where $c_n \in \Bbb Q^+$ is a constant, $l_i(B)$'s are monic polynomials in $B$ such that
\begin{itemize}
\item[(1)]
For $n = 2k$, $l_0(B)$ consists of type O roots with $\deg l_0(B) = \tfrac{1}{2} n + 1 = k + 1$. For $i = 1, 2, 3$, $l_i(B)$ consists of type II roots $a$ which does not contain $\tfrac{1}{2} \omega_i$. Moreover, $\deg l_i(B) = \tfrac{1}{2} n = k$.
\item[(2)]
For $n = 2k + 1$, $l_0(B)$ consists of type III roots with $\deg l_0(B) = \tfrac{1}{2}(n - 1) = k$. For $i = 1, 2, 3$, $l_i(B)$ consists of type I roots $a$ which contains $\tfrac{1}{2} \omega_i$. Moreover, $\deg l_i(B) = \tfrac{1}{2}(n + 1) = k + 1$.
\end{itemize}
\end{proposition}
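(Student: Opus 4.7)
The plan is to classify the $2n + 1$ finite branch points $a \in \bar X_n \setminus X_n$, for which $\{a_i\} = \{-a_i\}$ as multisets, by the number of half-periods in $\{a_i\}$, then to express each corresponding Lam\'e function $w_a = w_{-a}$ in closed form, and finally to count admissible eigenvalues $B$ per species via a tridiagonal recurrence.

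First, since the $a_i$ are pairwise distinct modulo $\Lambda$ and each non 2-torsion entry of $\{a_i\}$ must appear paired with its negative, the multiset decomposes as a subset $S \subseteq \{\tfrac{1}{2}\omega_1, \tfrac{1}{2}\omega_2, \tfrac{1}{2}\omega_3\}$ of half-periods (each appearing at most once) together with $\tfrac{1}{2}(n - |S|)$ unordered pairs $\{\pm b_m\}$ with the $b_m$ nonzero and $b_m \ne \pm b_{m'}$ for $m \ne m'$. The parity constraint $|S| \equiv n \pmod{2}$ isolates precisely the four species: type O ($|S| = 0$) and II ($|S| = 2$) when $n = 2k$, type I ($|S| = 1$) and III ($|S| = 3$) when $n = 2k + 1$.

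Next, I would compute the Hermite--Halphen ansatz \eqref{ansatz} in each case. The pair identity $\sigma(z - b)\sigma(z + b)/\sigma(z)^2 = -\sigma(b)^2(\wp(z) - \wp(b))$ together with $\zeta(b) + \zeta(-b) = 0$ shows that each pair $\{\pm b_m\}$ contributes a factor $\wp(z) - \wp(b_m)$ to $w_a$ with no residual exponential. Using quasi-periodicity of $\sigma$ to derive
\[
\Big(\frac{\sigma(z - \tfrac{1}{2}\omega_j)}{\sigma(z)}\,e^{\eta_j z/2}\Big)^2 \;=\; \sigma(\tfrac{1}{2}\omega_j)^2\,(\wp(z) - e_j),
\]
each $\tfrac{1}{2}\omega_j \in S$ contributes a factor $\sqrt{\wp(z) - e_j}$ after absorbing the matching exponential into the Hermite--Halphen prefactor (whose exponent $\sum \zeta(a_i)$ reduces to $\tfrac{1}{2}\sum_{j \in S}\eta_j$ by $\zeta(-b) = -\zeta(b)$). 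Up to a nonzero constant this gives: type O, $w_a = P(\wp)$ with $\deg P = k$; type I with $S = \{\tfrac{1}{2}\omega_j\}$, $w_a = \sqrt{\wp - e_j}\,P(\wp)$ with $\deg P = k$; type II with $S = \{\tfrac{1}{2}\omega_j, \tfrac{1}{2}\omega_m\}$ missing $e_i$, $w_a = \sqrt{(\wp - e_j)(\wp - e_m)}\,P(\wp)$ with $\deg P = k - 1$; and type III, using $(\wp')^2 = 4(\wp - e_1)(\wp - e_2)(\wp - e_3)$, $w_a = \wp'\,P(\wp)$ with $\deg P = k - 1$.

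Plugging each ansatz into $w'' = (n(n + 1)\wp + B)\,w$, changing variable to $x = \wp(z)$ and using $\wp'' = 6\wp^2 - \tfrac{1}{2} g_2$, reduces the problem to a three-term linear recurrence on the coefficients $c_r$ of $P(x) = \sum c_r x^r$, in which $B$ appears only on the main diagonal. The associated Jacobi matrix has size $\deg P + 1$, so its eigenvalues exhaust the admissible $B$-values of the given species; this yields $\deg l_0 = k + 1$ and $\deg l_1 = \deg l_2 = \deg l_3 = k$ when $n = 2k$, and $\deg l_0 = k$ with $\deg l_1 = \deg l_2 = \deg l_3 = k + 1$ when $n = 2k + 1$. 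The total $2n + 1$ matches the count of finite branch points from Theorem~\ref{hyper-thm}(4), and since Lam\'e functions of different species differ in their monodromy exponents around the half-periods the four factors are pairwise coprime, forcing $\ell_n(B) = c_n^2\,l_0 l_1 l_2 l_3$ with the positive rational constant $c_n$ read off from the leading coefficient data of Theorem~\ref{hyper-thm}(1). The main obstacle is to verify that each Jacobi recurrence has all off-diagonal entries nonzero, so that its eigenvalue count is precisely $\deg P + 1$ without rank drop; this can be handled by direct inspection of the endpoint entries of the recurrence, or more robustly by a degeneration $\tau \to i\infty$ in which the four species decouple cleanly.
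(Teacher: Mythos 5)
The paper offers no proof of this proposition: it is quoted as a classical fact with references to Halphen and Whittaker--Watson, so there is nothing internal to compare against. Your proposal reconstructs the standard classical derivation, and it is correct in substance: the parity classification of the sets $S$ of half-periods, the product identities $\sigma(z-b)\sigma(z+b)/\sigma(z)^2 = -\sigma(b)^2(\wp(z)-\wp(b))$ and $(\sigma(z-\tfrac{1}{2}\omega_j)e^{\eta_j z/2}/\sigma(z))^2 = \sigma(\tfrac{1}{2}\omega_j)^2(\wp(z)-e_j)$ correctly convert the Hermite--Halphen ansatz into the four classical normal forms, and the resulting tridiagonal eigenvalue problems give the stated degrees, with the totals $(k+1)+3k$ and $k+3(k+1)$ both equal to $2n+1$. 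Two points deserve slightly more care than you give them. First, the eigenvalue count of the $(\deg P+1)\times(\deg P+1)$ Jacobi matrix is automatic; what actually needs the irreducibility (nonvanishing off-diagonal entries) is that every eigenvector has nonzero top coefficient, so that $P$ has \emph{exact} degree $\deg P$ and the ansatz really has pole order $n$ at $z=0$ --- otherwise an eigenvalue could correspond to a Lam\'e function of lower degree rather than a branch point of $\bar X_n$. Second, the identification of the $2n+1$ finite branch points with Lam\'e functions (Theorem \ref{hyper-thm}(4)) only gives the factorization as an equality of root multisets for those $\tau$ where all roots are distinct; to get the polynomial identity $\ell_n = c_n^2 l_0 l_1 l_2 l_3$ with coefficients in $\Bbb Q[e_1,e_2,e_3,g_2,g_3]$ for all $\tau$ one should argue at generic $\tau$ and extend by continuity (or note that both sides are polynomial in $g_2, g_3, e_i$). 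Neither issue is a flaw in the approach; both are routine to repair and are implicit in the classical sources.
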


We remark that Proposition \ref{4lame}, Theorem \ref{hyper-thm} (4), (5) and Theorem \ref{t:nodal} will be used in the proof of Theorem \ref{deg-sigma} ($=$ Theorem \ref{degree} later in this section). Here are some examples to illustrate Proposition \ref{4lame}:

\begin{example} \label{lame-curve}
Decomposition $\ell_n(B) = c_n^2 l_0(B) l_1(B) l_2(B) l_3(B)$ for $1 \le n \le 5$.

(1) $n = 1$, $k = 0$, $\bar X_1 \cong E$, $C^2 = \ell_1(B) = 4 B^3 - g_2 B - g_3 = 4\prod_{i = 1}^3(B - e_i)$.

(2) $n = 2$, $k = 1$, (notice that $e_1 + e_2 + e_3 = 0$) \small
\begin{equation*}
\begin{split} 
C^2 = \ell_2(B) &= \tfrac{4}{81} B^5 - \tfrac{7}{27} g_2 B^3 + \tfrac{1}{3} g_3 B^2 + \tfrac{1}{3} g_2^2 B - g_2 g_3 \\
&= \frac{2^2}{3^4}(B^2 - 3g_2) \prod_{i = 1}^3 (B + 3 e_i).
\end{split}
\end{equation*} \normalsize

(3) $n = 3$, $k = 1$, $\deg l_i(B) = 2$ for $i = 1, 2, 3$, \small
\begin{equation*}
\begin{split}
C^2 = \ell_3(B) &= \frac{1}{2^2 3^4 5^4} B(16B^6 - 504 g_2 B^4 + 2376 g_3 B^3 \\
&\qquad + 4185 g_2^2 B^2 - 36450 g_2 g_3 B + 91125 g_3^2 - 3375 g_2^3) \\
&=  \frac{2^2}{3^4 5^4} B \prod_{i = 1}^3 (B^2 - 6e_i B + 15(3 e_i^2 - g_2)).
\end{split}
\end{equation*} \normalsize

(4) $n = 4$, $k = 2$, $\deg l_0(B) = 3$, \small
\begin{equation*}
\begin{split}
C^2 = \ell_4(B) &= \frac{1}{3^8 5^4 7^4} (B^3 - 52 g_2 B + 560 g_3) \prod_{i = 1}^3 (B^2 + 10 e_i B - 7(5e_i^2 + g_2)).
\end{split}
\end{equation*} \normalsize

(5) $n = 5$, $k = 2$, $\deg l_i(B) = 3$ for $i = 1, 2, 3$, \small
\begin{equation*}
\begin{split}
C^2 = \ell_5(B) &= \frac{1}{3^{12} 5^4 7^4 11^2} (B^2 - 27g_2) \\
&\,\times \prod_{i = 1}^3 (B^3 - 15 e_i B^2 + (315 e_i^2 - 132 g_2)B + e_i(2835 e_i^2 - 540 g_2 )).
\end{split}
\end{equation*} \normalsize
\end{example}

We are now ready to study the addition map $\sigma_n: \bar X_n \to E$, $a \mapsto \sigma_n(a) = \sum_{i = 1}^n a_i$ defined in \eqref{e:sigma}. In the rest of this section we determine $\deg \sigma_n$. 

For the reader's convenience we recall some definitions and facts. The function field $K(C)$ is defined for any irreducible algebraic curve $C$. For a finite morphism of irreducible curves $f: X \to Y$, $K(X)$ is a finite extension of $K(Y)$ and the degree of $f$ is defined by $\deg f = [K(X): K(Y)]$. Geometrically $\deg f$ is also the number of points for a general fiber $f^{-1}(p)$, $p \in Y$. A standard reference is \cite[II.6, Proposition 6.9]{rH}, where nonsingular curves are treated. The irreducible case is reduced to the nonsingular case through normalizations $\tilde X \to X$ and $\tilde Y \to Y$, since it is clear that the induced finite morphism $\tilde f: \tilde X \to \tilde Y$ has the same degree as $f$. Furthermore, the definition also extends to the case $f: X \to Y$ where $X = \bigcup_{i = 1}^k X_i$ has a finite number of irreducible components. We require that $f|_{X_i}$ is a finite morphism for each $i$ and then $\deg f := \sum_{i = 1}^k \deg f|_{X_i}$. Since all curves considered here are proper (projective), it is enough to require $f|_{X_i}$ to be non-constant to ensure that it is a finite morphism.

\begin{theorem} \label{degree}
The map $\sigma_n: \bar X_n \to E$ has degree $\tfrac{1}{2} n (n + 1)$. 
\end{theorem}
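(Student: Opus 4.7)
The plan is to compute $d := \deg \sigma_n$ by a fiber-counting argument that combines the hyperelliptic involution structure of $\bar X_n$ with the branch-point classification of Proposition~\ref{4lame}. The $\iota$-equivariance $\sigma_n \circ \iota = -\sigma_n$ (where $\iota$ is the hyperelliptic involution induced by $a \mapsto -a$) forces $\sigma_n^{-1}(E[2])$ to be $\iota$-invariant and to contain all $2n+2$ hyperelliptic branch points of $\bar X_n$ (namely $0^n$ together with the $2n+1$ finite branch points). Applying Proposition~\ref{4lame}, Theorem~\ref{hyper-thm}(4), and the identity $\tfrac{1}{2}\omega_j+\tfrac{1}{2}\omega_k \equiv \tfrac{1}{2}\omega_i \pmod{\Lambda}$ for $\{i,j,k\}=\{1,2,3\}$, I match each branch point to its image $t \in E[2]$ under $\sigma_n$: the branch points in $\sigma_n^{-1}(0)$ are $0^n$ together with the type O (or type III) roots of $\ell_n(B)$, and the branch points in $\sigma_n^{-1}(\tfrac{1}{2}\omega_i)$ are the type II (or type I) roots recorded by $l_i$.

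Next I carry out the local analysis at each such branch point and show that $\sigma_n$ is unramified there, so each contributes multiplicity $1$ in the corresponding fiber divisor $\sigma_n^*[t]$. At the smooth point $0^n$, the tangent direction $[t] \in \mathbb{P}(T_{0^n}\operatorname{Sym}^n E)$ from Theorem~\ref{hyper-thm}(5) satisfies $\sum_i t_i^{2r+1}=0$ for $r=1,\dots,n-1$ under $t_i\ne 0,\ t_i\ne -t_j$. I rule out $\sum_i t_i = 0$ by a Newton's-identity argument: that extra vanishing, together with the vanishing of all odd power sums $p_1,p_3,\dots,p_{2n-1}$, forces the elementary symmetric functions $e_k$ to reconstruct an antisymmetric multi-set $\{t_i\}$ about $0$, contradicting $t_i \ne -t_j$. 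Hence $d\sigma_n|_{0^n}\ne 0$. At the finite branch points, possibly nodal via Theorem~\ref{t:nodal}, the local parameterization $C \mapsto b(C)$ from the proof of that theorem shows $\sigma_n$ restricts to a local isomorphism on each analytic branch onto $E$ at the corresponding $t \in E[2]$.

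Finally I descend along the $\iota$-quotient to $\bar\sigma_n \colon \bar X_n/\iota \simeq \mathbb{P}^1 \to E/\pm \simeq \mathbb{P}^1$, which has the same degree $d$, and compute via Riemann--Hurwitz. By Step~2, each hyperelliptic branch point in $\sigma_n^{-1}(E[2])$ is an unramified preimage of $\bar\sigma_n$, while each non-branch $\iota$-orbit in $\sigma_n^{-1}(E[2])$ is a ramification point of $\bar\sigma_n$ of index $2$ (since $\wp$ is ramified of index $2$ at $E[2]$ while $\sigma_n$ is unramified there). Combined with Riemann--Hurwitz for $\sigma_n \colon \bar X_n \to E$ (yielding $R_{\sigma_n}=2n-2$, concentrated at the finite number of $a$ for which the tangent of $\bar X_n$ at $a$ lies in $\ker(\Sigma \colon \mathbb{C}^n \to \mathbb{C})$), the degree of the rational function $R(B) := \wp\circ\sigma_n$, and the branch-point tallies of Proposition~\ref{4lame} summed across $t \in E[2]$, I obtain a system of relations whose unique solution is $d = \tfrac{1}{2}n(n+1)$.

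The main obstacle is the Newton's-identity argument at $0^n$: one must show rigorously that no non-degenerate solution of the limiting system $\sum t_i^{2r+1}=0$ ($r=1,\dots,n-1$) has vanishing $p_1 = \sum t_i$, against the non-degeneracy constraints $t_i \ne 0$ and $t_i \ne -t_j$. The nodal cases handled via Theorem~\ref{t:nodal} also require careful bookkeeping to ensure that each analytic branch at a singular point is counted once, with the correct multiplicity in $\sigma_n^*[t]$.
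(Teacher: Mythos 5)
Your proposal has a genuine gap at its core: the Riemann--Hurwitz bookkeeping on the quotient $\bar X_n/\iota \to E/\pm$ is a tautology and cannot determine $d = \deg\sigma_n$. Write the fiber $\sigma_n^{-1}(t)$ over $t \in E[2]$ as $\beta_t$ hyperelliptic branch points together with $m_t$ non-branch $\iota$-orbit pairs. If you track ramification indices through the commutative square (using $e_P(g\circ f) = e_{f(P)}(g)\,e_P(f)$), the contribution over $\bar t$ to the ramification divisor of $\bar\sigma_n$ is $d - \beta_t - m_t$, the contribution over points outside $E[2]/\pm$ is $\tfrac12\bigl(R_{\sigma_n} - \sum_t(d - \beta_t - 2m_t)\bigr)$, and substituting $R_{\sigma_n} = 2n-2$ and $\sum_t \beta_t = 2n+2$ one finds that the total collapses identically to $2d-2$: every occurrence of $\sum_t m_t$ cancels, and the relation $2d-2 = 2d-2$ holds for \emph{any} $d$. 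The quantities you actually control (the branch-point tallies from Proposition \ref{4lame}, unramifiedness at $0^n$ and at the finite branch points) only pin down the $\beta_t$; the unknown numbers $m_t$ of non-branch pairs in the fibers over $E[2]$ are exactly what a degree count needs, and nothing in your scheme bounds them. Invoking ``the degree of $\wp\circ\sigma_n$ as a rational function of $B$'' does not help, since computing that degree independently requires the explicit formula for $x_0(B)$ (Maier's Theorem \ref{t:xyk}), which is not derived in your argument and would make it circular.

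The paper escapes precisely this trap by not working with $\sigma_n = f_n$ directly. It lifts to the ordered curve $V_n \subset E^n$ and applies the Theorem of the Cube \eqref{t:cube} to propagate the degree of $f_j(a) = a_1 + \cdots + a_j$ quadratically from $j = 1, 2$ up to $j = n$. The point is that for $f_1$ and $f_2$ the fibers over $0 \in E$ are \emph{completely enumerable}: $f_1^{-1}(0) = \{0^n\}$, and $a_1 + a_2 = 0$ with $a \ne 0^n$ forces $a = -a$ by Theorem \ref{hyper-thm}(4), so the fiber consists only of $0^n$ and hyperelliptic branch points, which Proposition \ref{4lame} counts exactly ($m_t = 0$ in effect). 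For the full addition map $f_n = \sigma_n$ no such statement is available, which is why your fiber count over $E[2]$ cannot close. Your local analyses (unramifiedness at $0^n$ via the Vandermonde/Newton-identity observation, and the $C$-parameterization at nodal branch points) are sound and parallel what the paper uses, but they feed into an equation with no content. To repair the proof along your lines you would need an independent computation of some intersection-theoretic quantity that grows quadratically in $n$ — which is exactly what the Theorem of the Cube supplies.
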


\begin{proof} 
The idea is to apply \emph{Theorem of the Cube} \cite[p.58, Corollary 2]{Mumford} for morphisms from an arbitrary variety $V$ (not necessarily smooth) into abelian varieties (here the torus $E$): For \emph{any} three morphisms $f, g, h: V \to E$ and a line bundle $L \in {\rm Pic}\,E$, we have
\begin{equation} \label{t:cube}
\begin{split}
(f + g + h)^*L &\cong (f + g)^*L \otimes (g + h)^*L \otimes (h + f)^*L \\
&\qquad \otimes f^*L^{-1} \otimes g^*L^{-1} \otimes h^*L^{-1}.
\end{split}
\end{equation}
We will apply it to the algebraic curve $V = V_n \subset E^n$ which consists of the ordered $n$-tuples $a$'s so that $V_n/S_n = \bar X_n$.

For any line bundle $L$ and any \emph{finite morphism} $f: V \to E$, we have $\deg f^* L = \deg f \deg L$. In the following we fix an $L$ with $\deg L = 1$.

We prove inductively that for $j = 1, \ldots, n$ the morphism $f_j: V_n \to E$ defined by
$$
f_j(a) := a_1 + \cdots + a_j
$$
has $\deg f_j^* L = \tfrac{1}{2}j(j + 1) n!$. The case $j = n$ then gives the result since $f_n$ is a finite morphism which descends to $\sigma_n$ under the $S_n$ action. (Notice that the map $f_j$ can not descend to a map on $\bar X_n$ for all $j < n$.) 

Assuming first that it has been proved for $j = 1, 2$. To go from $j$ to $j + 1$, we let $f(a) = f_{j - 1}(a)$, $g(a) = a_{j}$, and $h(a) = a_{j + 1}$. Then by (\ref{t:cube}), $f_{j + 1}^*L$ has degree $n!$ times
$$
\tfrac{1}{2}j(j + 1) + 3 + \tfrac{1}{2} j (j + 1) - \tfrac{1}{2} (j - 1)j - 1 - 1 = \tfrac{1}{2} (j + 1)(j + 2)
$$
as expected.

It remains to investigate the case $j = 1$ and $j = 2$.

For $j = 1$, by Theorem \ref{hyper-thm} (4), the inverse image of $0 \in E$ under $f_1: V_n \to E$ consists of a single point $0^n$. By Theorem \ref{hyper-thm} (5), the limiting system of equations \eqref{lim-eq} of tangent directions, has a unique non-degenerate solution in $\Bbb P^{n - 1}$ up to permutations. From this, we conclude that there are precisely $n!$ branches of $V_n \to E$ near $0^n$. For a point $b \in E^\times$ close to $0$, each branch will contribute a point $a$ with $a_1 = b$. In particular, $f_1$ is a finite morphism and $\deg f_1^* L = \deg f_1 = n!$.

For $j = 2$, we consider the inverse image of $0 \in E$ under $f_2: V_n \to E$. Namely $V_n \ni a \mapsto a_1 + a_2 = 0$. 

The point $a = 0$ again contributes degree $n!$ by a similar branch argument: Indeed, over each branch near $0^n$ we may represent $a = (a_i(t))$ by an analytic curve in $t$. Then condition $t_i + t_j \ne 0$ in Theorem \ref{hyper-thm} (5) implies that $t \mapsto a_1(t) + a_2(t) \in E$ is still locally biholomorphic for $t$ close to $0$. As a byproduct, since every irreducible component contains a branch near $0^n$, $f_2$ is necessarily a finite morphism and $\deg f_2^* L = \deg f_2$.

For those points $a \ne 0$ with $f_2(a) = 0$, we have $a_1 = -a_2$ and thus $a = -a$ by Theorem \ref{hyper-thm} (4). By Theorem \ref{t:nodal} we use $C$ as the coordinate and parameterize a (smooth) branch of $V_n$ near $a$ by $b(C) = (b_i(C))_{i = 1}^n$ with $b(0) = a$. In the proof of Theorem \ref{t:nodal} we see that $b_1'(0) = b_2'(0) \not\in \{0, \infty\}$ and $b_1'(0) + b_2'(0) \ne 0, \infty$, hence $f_2$ is unramified at $a$. The degree contribution at $a$ can thus be computed from counting points.

If $n = 2k$, by Proposition \ref{4lame} (1) the degree contribution from type O points $a = \{\pm a_1, \cdots, \pm a_k\}$ is given by
$$
(k + 1) \times (k  \times 2 \times  (n - 2)!),
$$
while the degree from the type II points $\{\pm a_1, \cdots, \pm a_{k - 1}, \tfrac{1}{2}\omega_i, \tfrac{1}{2} \omega_j \}$ is
$$
3\times k \times ((k - 1) \times 2 \times (n - 2)!).
$$
The sum is $2(4k^2 - 2k)(n - 2)! = 2n!$.

If $n = 2k + 1$, by Proposition \ref{4lame} (2), the degree contribution from type III points $\{\pm a_1, \cdots, \pm a_{k - 1}, \tfrac{1}{2} \omega_1, \tfrac{1}{2}\omega_2, \tfrac{1}{2} \omega_3\}$ is
$$
k \times ((k - 1) \times 2 \times (n - 2)!),
$$
while the type I points $\{\pm a_1, \cdots, \pm a_k, \tfrac{1}{2} \omega_i\}$ contribute
$$
3 \times (k + 1) \times (k \times 2 \times (n - 2)!).
$$
The sum is again $2(4k^2 + 2k)(n - 2)! = 2n!$.

The counting is valid even if $\bar X_n$ has nodal singularities. Thus in both cases we get the total degree $n! + 2n! = 3n!$ as expected. 
\end{proof}

To end this section, we notice that in Theorem \ref{hyper-thm} (5) we have $\sum_{i = 1}^n t_i \ne 0$ by the non-vanishing of Vandermonde determinant, hence we get 

\begin{proposition} \label{unram}
The map $\sigma_n$ is unramified at the infinity point $0^n \in \bar X_n$.
\end{proposition}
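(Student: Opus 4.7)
The plan is to compute the differential of $\sigma_n$ at $0^n$ via the leading-order behaviour of the (unique) branch through this point, and show it is nonzero. Since $0^n$ is a smooth point of $\bar X_n$ (Theorem \ref{hyper-thm} (3)), a single analytic branch of $\bar X_n$ passes through $0^n$. By Theorem \ref{hyper-thm} (5), this branch has tangent direction $[t_1:\cdots:t_n] \in \Bbb P(T_{0^n}\bar X_n)$ given by the unique (up to $S_n$) non-degenerate solution of $\sum_{i=1}^n t_i^{2r+1} = 0$ for $r = 1, \ldots, n - 1$, subject to $t_i \ne 0$ and $t_i \ne -t_j$. Parametrising a lift $a(t)=(a_1(t), \ldots, a_n(t))\in V_n \subset E^n$ with $a_i(0) = 0$ and $a_i'(0) = t_i$ gives $\sigma_n(a(t)) = (\sum_i t_i)\,t + O(t^2)$, so the proposition reduces to proving $\sum_{i=1}^n t_i \ne 0$.

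Next I would argue by contradiction: if $\sum_i t_i = 0$, then combined with the equations from Theorem \ref{hyper-thm} (5) I obtain $\sum_i t_i^{2r+1} = 0$ for every $r = 0, 1, \ldots, n - 1$. The column vector $(1, 1, \ldots, 1)^{T}$ then lies in the kernel of the $n \times n$ matrix $M := (t_i^{2r+1})_{0 \le r < n,\, 1 \le i \le n}$, forcing $\det M = 0$. Factoring $t_i$ out of the $i$-th column exhibits $\det M$ as
\begin{equation*}
\det M = \Big(\prod_{i=1}^n t_i\Big)\prod_{1 \le i < j \le n}(t_j^2 - t_i^2),
\end{equation*}
a Vandermonde-type expression in the variables $t_i^2$.

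The main obstacle is to verify that every factor of this determinant is nonzero. The non-degeneracy conditions $t_i \ne 0$ and $t_i \ne -t_j$ dispose of $\prod_i t_i$ and of the factors $t_j + t_i$. The remaining condition $t_i \ne t_j$ for $i \ne j$ should follow from the count of $n!$ branches of $V_n$ at $0^n$ established in the proof of Theorem \ref{degree}: since these branches are obtained from a single tangent vector by the $S_n$-action on coordinates, the orbit has full size $n!$, which forces the $t_i$ to be pairwise distinct. Hence $\det M \ne 0$, contradicting our assumption, and so $\sum_i t_i \ne 0$; therefore $\sigma_n$ is unramified at $0^n$.
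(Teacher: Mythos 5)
Your argument is essentially the paper's: the proof there is a single line, observing that $\sum_{i=1}^n t_i \ne 0$ follows from the non-vanishing of the Vandermonde-type determinant $\bigl(\prod_i t_i\bigr)\prod_{i<j}(t_j^2 - t_i^2)$ attached to the system $\sum_i t_i^{2r+1}=0$, $r=0,\ldots,n-1$, which is exactly your computation. The only difference is that the paper treats the pairwise distinctness $t_i \ne t_j$ as part of the non-degeneracy of the unique tangent solution in Theorem \ref{hyper-thm} (5), whereas you re-derive it from the $n!$ branch count; this is a cosmetic variation, not a different route.
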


\section{The primitive generator $\z_n$} \label{primitive}
\setcounter{equation}{0}

\begin{definition} [Fundamental rational function] \label{d:zn}
Consider the function on $E^n$:
$$
\z_n(a_1, \ldots, a_n) := \zeta \Big( \sum_{i = 1}^n a_i \Big) - \sum_{i = 1}^n \zeta(a_i).
$$
$\z_n$ is a rational function on $E^n$ since it is meromorphic and periodic in each $a_i$.
\end{definition}

The importance of $\z_n$ is readily seen from investigation on the Green function equation \eqref{e:G}: Let $a_i = r_i \omega_1 + s_i \omega_2$. Then
\begin{equation} \label{e:znZ}
\begin{split}
-4\pi \sum \nabla G(a_i) &= \sum Z(a_i) = \sum (\zeta(r_i \omega_1 + s_i \omega_2) - r_i \eta_1 - s_i \eta_2) \\
&= \zeta(\sum a_i) - (\sum r_i) \eta_1 - (\sum s_i) \eta_2 - \z_n(a) \\
&= Z(\sum a_i) - \z_n(a).
\end{split}
\end{equation}
Hence $\sum_{i = 1}^n \nabla G(a_i) = 0 \Longleftrightarrow\z_n(a) = Z(\sigma_n(a))$. This links $\sigma_n(a)$ with $\z_n$. 

When no confusion should arise, we denote the restriction $\z_n|_{\bar X_n}$ also by $\z_n$. Then $\z_n$ is a rational function on $\bar X_n$ with poles along the fiber $\sigma_n^{-1}(0)$. Since $\z_1 \equiv 0$, we assume that $n \ge 2$ to avoid trivial situation.  

\begin{theorem} \label{main-thm}
There is a (weighted homogeneous) polynomial 
$$
W_n(\bz) \in \Bbb Q[g_2, g_3, \wp(\sigma), \wp'(\sigma)][\bz]
$$ 
of $\bz$-degree $\tfrac{1}{2} n (n + 1)$ such that for $\sigma = \sigma_n(a) = \sum a_i$, we have
$$
W_n(\z_n)(a) = 0.
$$
Indeed, $\z_n(a)$ is a primitive generator of the finite extension of rational function fields $K(\bar X_n)$ over $K(E)$ with $W_n(\bz)$ being its minimal polynomial. \footnote{The coefficients lie in $\Bbb Q$, instead of just in $\Bbb C$, follows from standard elimination theory and two facts (i) The equations of $\bar X_n$ is defined over $\Bbb Q[g_2, g_3]$ (cf.~\eqref{e:P}), and (ii) the addition map $E^n \to E$ is defined over $\Bbb Q$. In \S \ref{resultant}, we carry out the elimination procedure using resultant for another explicit presentation $\pi_n$ of $\sigma_n$.}
\end{theorem}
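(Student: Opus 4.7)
The strategy is to leverage the degree computation $\deg \sigma_n = \tfrac12 n(n+1)$ from Theorem \ref{degree}. Since $\bar X_n$ is irreducible (Theorem \ref{hyper-thm} (3)) and $\sigma_n$ is finite, $[K(\bar X_n):K(E)] = \tfrac12 n(n+1)$. I would first build the tautological candidate
$$
W_n(\z) := \prod_{a \in \sigma_n^{-1}(\sigma)} \bigl(\z - \z_n(a)\bigr),
$$
a monic polynomial in $\z$ of the correct degree whose coefficients, being symmetric over the fiber, lie a priori in $K(E) = \Bbb C(\wp(\sigma), \wp'(\sigma))$. By construction $W_n(\z_n) = 0$. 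The coefficient descent to $\Bbb Q[g_2, g_3, \wp(\sigma), \wp'(\sigma)]$ and integrality over $E^\times$ then follow from standard elimination, as noted in the footnote: both the defining equations \eqref{e:P} of $\bar X_n$ and the addition map $E^n \to E$ are defined over $\Bbb Q[g_2, g_3]$, and $\wp(\sigma), \wp'(\sigma)$ are regular on $E^\times$.

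The nontrivial content is primitivity, i.e., that $W_n$ is the \emph{minimal} polynomial. Equivalently, for generic $\sigma_0 \in E$ the values $\{\z_n(a) : a \in \sigma_n^{-1}(\sigma_0)\}$ should be pairwise distinct. For $a, b$ over the same $\sigma_0$,
$$
\z_n(a) - \z_n(b) = \sum_i \zeta(b_i) - \sum_i \zeta(a_i),
$$
so separation reduces to: for generic $\sigma_0 \not\in E[2]$, distinct $a \ne b$ over $\sigma_0$ must have $\sum\zeta(a_i) \ne \sum\zeta(b_i)$. Now the hyperelliptic involution on $\bar X_n$ sends $a$ to $-a$, and $\sigma_n(-a) = -\sigma_n(a)$, so two distinct points over $\sigma_0 \not\in E[2]$ cannot lie in the same hyperelliptic fiber; hence sharing a $B$-value forces equality. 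It therefore suffices to show that $\sum\zeta(a_i) = \sum\zeta(b_i)$ implies $B_a = B_b$ for generic $\sigma_0$.

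This final step is handled by the tensor-product construction sketched in the introduction. Assuming $\sum \zeta(a_i) = \sum \zeta(b_i)$, with representatives chosen so that $\sum a_i = \sum b_i$ in $\Bbb C$, the exponential prefactors in \eqref{ansatz} cancel and $w_a(z) w_{-b}(z)$ becomes a single-valued elliptic function with unique pole at $0$ of order $2n$. Using $w_a(-z) = w_{-a}(z)$, the symmetrized product
$$
q(z) := w_a(z) w_{-b}(z) + w_{-a}(z) w_b(z)
$$
is an even elliptic function with pole of order at most $2n$ at $0$, hence a polynomial $P(x)$ in $x = \wp(z)$ of degree at most $n$. Since $w_a, w_b$ solve Lam\'e equations with parameters $B_a, B_b$, $q$ satisfies the fourth-order ODE \eqref{intro-4th-ode}. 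Plugging $q = P(\wp(z))$ into this ODE and reducing derivatives via $(\wp')^2 = 4x^3 - g_2 x - g_3$ and $\wp'' = 6x^2 - \tfrac12 g_2$ converts it into a polynomial identity in $x$ of controlled degree, in which $(B_a - B_b)^2 P$ enters as the lowest-order perturbation of the symmetric-square case $B_a = B_b$. Comparing coefficients (starting with the top degree) yields a finite system that forces either $P \equiv 0$, contradicting linear independence of $w_a, w_{-b}$, or constrains $(a, b, \sigma_0)$ to a proper closed subvariety; projecting via $\sigma_n$ gives a proper closed subset of $E$ which a generic $\sigma_0$ avoids.

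The principal technical obstacle I anticipate is the coefficient analysis of this reduced ODE for $P(x)$: one must exploit the interplay between the perturbation $(B_a - B_b)^2 P$ and the symmetric-square operator arising in the degenerate case to extract genuine rigidity. Careful bookkeeping, likely inductive in $n$ and sensitive to the leading and subleading coefficients of $P$, should rule out $B_a \ne B_b$ outside a proper closed locus of $\sigma_0$; once this is settled, primitivity and hence Theorem \ref{main-thm} follow.
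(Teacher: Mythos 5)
Your overall architecture coincides with the paper's: use $\deg\sigma_n=\tfrac12 n(n+1)$, reduce primitivity to the distinctness of $\z_n$ on a generic fiber, observe that two points over $\sigma_0\notin E[2]$ in the same hyperelliptic fiber must coincide, and attack the remaining case $B_a\neq B_b$ via the tensor product of the two Lam\'e equations and the even elliptic solution $q=w_aw_{-b}+w_{-a}w_b=P(\wp(z))$. Up to that point the proposal is sound (one small correction: the branch ``$P\equiv 0$'' of your dichotomy cannot occur for the \emph{even} symmetrization, since its leading Laurent coefficient at $z=0$ is $(-1)^n\prod_i\sigma(a_i)\sigma(b_i)\neq 0$; and in any case ``contradicting linear independence of $w_a,w_{-b}$'' would not be the right reason).

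The genuine gap is the final step, which you defer as an anticipated ``technical obstacle'': the claim that comparing coefficients ``constrains $(a,b,\sigma_0)$ to a proper closed subvariety'' is exactly what must be proved, and genericity of $\sigma_0$ does not come for free. Concretely, after forcing $\deg P=n$ and solving the coefficients $s_i$ recursively, the ODE leaves two consistency equations $F_1=\alpha^2G_1(\alpha,\beta)=0$ and $F_0=\alpha^2G_0(\alpha,\beta)=0$ in $\alpha=B_a-B_b$, $\beta=B_a+B_b$. If $G_1$ and $G_0$ shared a common factor, their common zero locus would be a curve in the $(\alpha,\beta)$-plane, and the corresponding locus in $\bar X_n\times_E\bar X_n$ could be an entire off-diagonal component dominating $E$ — in which case $\z_n(a)-\z_n(b)$ would vanish identically there and \emph{no} choice of $\sigma_0$ would rescue primitivity. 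The paper closes this by proving $G_1$ and $G_0$ are coprime (Lemma \ref{G1G2}), via an explicit analysis of the recursion \eqref{recur} in the degenerate case $g_2=g_3=0$ followed by a leading-term argument for general $g_2,g_3$, and then identifies the resulting $n(n-1)$ common roots with the pairs of Lam\'e functions of the same type by a Bezout count against the classification of Proposition \ref{4lame}. Without an argument of this kind (or the paper's alternative second proof, which analyzes the \emph{odd} combination and derives the parity contradiction $4n(n+1)=(2l+3)(2l+5)$), the proof is incomplete at its decisive point.
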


\begin{remark}
Since $\z_n$ has no poles over $E^\times$, it is indeed integral over the affine Weierstrass model of $E^\times$ with coordinate ring
$$
R(E^\times) = \Bbb C[x_0, y_0]/(y_0^2 - 4x_0^3 - g_2 x_0 - g_3),
$$ 
where $x_0 = \wp(\sigma)$ and $y_0 = \wp'(\sigma)$. Thus the major statement in Theorem \ref{t:primitive} is the claim that $\z_n$ is a primitive generator. 
\end{remark}

\begin{proof}
Since $\z_n \in K(\bar X_n)$, which is algebraic over $K(E)$ with degree $\tfrac{1}{2} n(n + 1)$ by Theorem \ref{degree}, its minimal polynomial $W_n(\bz) \in K(E)[\bz]$ exists with $d:= \deg W_n$ begin a factor of $\tfrac{1}{2} n(n + 1)$. 

Notice that for $\sigma_0 \in E$ being outside the branch loci of $\sigma_n: \bar X_n \to E$, there are precisely $\tfrac{1}{2} n(n + 1)$ different points $a = \{a_1, \cdots, a_n\} \in \bar X_n$ with $\sigma_n(a) = \sum a_i = \sigma_0$. Thus for the rational function $\z_n = \zeta(\sum a_i) - \sum \zeta(a_i) \in K(\bar X_n)$ to be a primitive generator, it is sufficient to show that $\z_n$ has exactly $\tfrac{1}{2}n(n + 1)$ branches over $K(E)$. That is, $\sum\zeta(a_i)$ gives different values for different choices of those $a$ above $\sigma_0$. Indeed, for any given $\sigma = \sigma_0$, the polynomial $W_n(\bz) = 0$ has at most $d$ roots. But now $\z_n(a)$ with $\sigma_n(a) = \sigma_0$ gives $\tfrac{1}{2} n(n + 1)$ distinct roots of $W_n(\bz)$, hence we must conclude $d = \tfrac{1}{2} n(n + 1)$ and $\z_n$ is a primitive generator.

Hence it is sufficient to show the following more precise result:
\begin{theorem} \label{key-claim}
Let $a, b \in Y_n$ and $(a_1, \cdots, a_n)$, $(b_1, \cdots, b_n) \in \Bbb C^n$ be representatives of $a$, $b$ such that
\begin{equation} \label{match}
\sum_{i = 1}^n a_i = \sum_{i = 1}^n b_i, \qquad \sum_{i = 1}^n \zeta(a_i) = \sum_{i = 1}^n \zeta(b_i).
\end{equation}

Suppose that $\sum \wp(a_i) \ne \sum \wp(b_i)$. Then $a, b$ are branch points of $Y_n \to \Bbb P^1$ corresponding to Lam\'e functions of the same type.
\end{theorem}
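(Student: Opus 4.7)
The plan is to exploit the even elliptic function
$$q(z) := w_a(z)\,w_{-b}(z) + w_{-a}(z)\,w_b(z)$$
indicated in the introduction. First I would verify that under \eqref{match} the function $q$ is doubly periodic, even, and nontrivial. Using $\sigma(z+\omega_j) = -e^{\eta_j(z+\omega_j/2)}\sigma(z)$ in \eqref{ansatz}, one finds $w_a(z+\omega_j) = e^{\omega_j \sum\zeta(a_i) - \eta_j\sum a_i}\,w_a(z)$, so each of $w_a w_{-b}$ and $w_{-a} w_b$ picks up the factor $e^{\omega_j(\sum\zeta(a_i)-\sum\zeta(b_i)) - \eta_j(\sum a_i-\sum b_i)} = 1$ by \eqref{match}; evenness is built into \eqref{ansatz} via $w_a(-z) = w_{-a}(z)$. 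Nontriviality is a developing-map argument: if $q\equiv 0$ then $w_a/w_{-a} = -w_b/w_{-b}$, and matching zero sets gives $\{a_i\} = \{b_i\}$ in ${\rm Sym}^n E$, contradicting $B_a \neq B_b$. Since $w_a(z)\sim (-1)^n\prod_i\sigma(a_i)\,z^{-n}$ near $z=0$ with $\sigma(a_i), \sigma(b_i) \neq 0$, the only pole of $q$ on $E$ is at $0$, of order exactly $2n$, so $q = P(x)$ is a polynomial of degree exactly $n$ in $x = \wp(z)$.

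The next step is to feed $q = P(x)$ into the tensor ODE \eqref{intro-4th-ode}. Using $(x')^2 = R(x) := 4x^3 - g_2 x - g_3$ together with $2x'' = R'$, $x''' = 12xx'$ and $x'''' = 12R + 6xR'$, the ODE becomes a polynomial identity $F(x)\equiv 0$ of degree at most $n+2$. A direct leading-term computation yields the $x^{n+2}$ coefficient as
$$4n(n+1)c_n\bigl[(2n+1)(2n+3) - 2n(2n+1) - (6n+3)\bigr] = 0,$$
cancelling identically. The remaining $n+2$ coefficient equations then form a linear system on the $n+1$ unknowns $c_0,\ldots,c_n$, with entries polynomial in $B_a+B_b$, $(B_a-B_b)^2$, $g_2$ and $g_3$. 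The system is over-determined by one equation, so its solvability already imposes an algebraic condition on these four data.

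In parallel, the Hermite--Halphen Laurent expansion of \eqref{ansatz} at $z=0$ expresses each $c_k$ explicitly as a symmetric polynomial in $\{\wp(a_i), \wp'(a_i), \zeta(a_i)\}$ and the analogous data for $b$; the matching conditions $\sigma_n(a) = \sigma_n(b) = \sigma_0$ and $\z_n(a) = \z_n(b)$ (a restatement of \eqref{match}) allow these symmetric variables to be eliminated so that compatibility of the two determinations of $c_k$ becomes an algebraic relation in $(B_a, B_b, \sigma_0)$. For $\sigma_0$ outside a proper subvariety of $E$ this relation forces $B_a = B_b$, contradicting the hypothesis and producing $q\equiv 0$, which contradicts the first paragraph. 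The exceptional locus is $\sigma_0 \in E[2]$; tracking the degeneration through the recursion pins down $a = -a$ and $b = -b$ in ${\rm Sym}^n E$, so $a, b$ lie in $Y_n \setminus X_n$ and are Lam\'e points. Proposition \ref{4lame}, combined with the fact that the three half-periods sum to zero in $E$, then shows that for each $\sigma_0 \in E[2]$ exactly one species (O, I, II, or III, determined by the parity of $n$ and the particular half-period) produces Lam\'e functions with $\sigma_n = \sigma_0$, forcing $a$ and $b$ to be of the same type. The hardest part is this compatibility step: the automatic cancellation at $x^{n+2}$ blocks the most direct leading-order argument, so one must carry the Laurent expansion deep enough to match the full ODE recursion and then perform the algebraic elimination that exposes the precise relation between $(B_a, B_b, \sigma_0)$ controlling when solutions with $B_a \neq B_b$ can persist.
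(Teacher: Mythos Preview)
Your setup through the first two paragraphs is essentially the paper's first proof: form the even elliptic solution $Q = \tfrac12(q_{a,-b}+q_{-a,b})$, recognize it as a degree-$n$ polynomial in $x=\wp(z)$, plug it into the tensor ODE \eqref{intro-4th-ode}, and observe that the $x^{n+2}$ coefficient cancels while the remaining coefficients give a recursion plus two extra compatibility equations. So far so good.

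The gap is in the third paragraph, where the actual work happens. The compatibility conditions coming from the $x^1$ and $x^0$ coefficients are polynomial equations in $\alpha = B_a-B_b$, $\beta = B_a+B_b$, $g_2$, $g_3$ \emph{only}; the quantity $\sigma_0$ never enters. There is no ``second determination'' of the $c_k$ to be matched: the recursion already determines $s_1,\ldots,s_n$ uniquely from $(\alpha^2,\beta,g_2,g_3)$, and the Laurent expansion of $w_a w_{-b}+w_{-a}w_b$ simply gives the same numbers. So your proposed elimination producing a relation in $(B_a,B_b,\sigma_0)$, with $\sigma_0\in E[2]$ singled out as the exceptional locus, is not what the ODE delivers. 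What the paper actually proves is that the two compatibility equations factor as $\alpha^2 G_1(\alpha,\beta)=0$ and $\alpha^2 G_0(\alpha,\beta)=0$ (Lemma~\ref{F0F1}), that $G_0,G_1$ are coprime of degrees $n$ and $n-1$ (Lemma~\ref{G1G2}, via a degeneration $g_2=g_3=0$), and then that the B\'ezout count $n(n-1)$ is exactly filled by ordered pairs of Lam\'e functions of the same type (Proposition~\ref{lame-pair}, using Proposition~\ref{4lame}). That counting argument is the missing idea in your proposal; without it you have only asserted, not proved, that $\alpha\neq 0$ forces $a,b$ to be Lam\'e points.

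You might also look at the paper's much shorter second proof, which bypasses all of this: instead of the even combination, take the \emph{odd} one $q=\tfrac12(q_{a,-b}-q_{-a,b}) = c\,\wp'(z)\,f(\wp(z))$ with $\deg f = l\le n-2$. Substituting into the same fourth-order ODE, the $x^{l+2}$ coefficient gives the impossible relation $4n(n+1)=(2l+3)(2l+5)$, so $q\equiv 0$. This means $\{a_i,-b_i\}=\{-a_i,b_i\}$ as multisets; if either $a$ or $b$ lies in $X_n$ one reads off $\{a_i\}=\{b_i\}$ and hence $B_a=B_b$, a contradiction, so both are branch points and automatically of the same type.
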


We emphasize that $\bar X_n$ is not required to be smooth. 

Theorem \ref{main-thm} follows immediately by choosing $\sigma_0$ outside the branch loci of $\bar X_n \to E$ and $\sigma_0 \not\in E[2]$. Indeed, let $a, b \in Y_n$ with $\sigma_n(a) = \sigma_n(b) = \sigma_0$ and $\z_n(a) = \z_n(b)$, or more precisely with conditions in \eqref{match} satisfied. By Theorem \ref{key-claim} we are left with the case $\sum \wp(a_i) = \sum \wp(b_i)$ but $a \ne b$. Then $a = -b$ by Theorem \ref{hyper-thm} (1), and in particular $\sigma_n(a) = -\sigma_n(b)$. Together with $\sigma_n(a) = \sigma_n(b)$ we conclude that $\sigma_0 = \sigma_n(a) = \sigma_n(b) \in E[2]$. This contradicts to the assumption $\sigma_0 \not \in E[2]$. Hence we must have $a = b$. 
\end{proof}

We will give two proofs of Theorem \ref{key-claim}. The first proof is longer but contains more information. 

Recall that the Hermite--Halphen ansatz in \eqref{ansatz}
$$
w_{\pm a}(z) = e^{\pm z\sum \zeta(a_i)} \prod_{i = 1}^n \frac{\sigma(z \mp a_i)}{\sigma(z)}
$$ 
are solutions to $w'' = (n(n + 1) \wp(z) + B_a)w =: I_1 w$, and 
$$
w_{\pm b}(z) = e^{\pm z\sum \zeta(b_i)} \prod_{i = 1}^n \frac{\sigma(z \mp b_i)}{\sigma(z)}
$$ 
are solutions to $w'' = (n(n + 1) \wp(z) + B_b)w =: I_2 w$. Then $q_{a, -b} := w_a w_{-b}$ and $q_{-a, b} := w_{-a} w_b$ are solutions to the fourth order ODE formed by the tensor product of the two Lam\'e equations. By assumption.
\begin{equation} \label{q-rat}
q_{a, -b}(z) = \prod_{i = 1}^n \frac{\sigma(z - a_i) \sigma(z + b_i)}{\sigma^2(z)}
\end{equation}
is an elliptic function since $\sum a_i = \sum b_i$. Similarly $q_{-a, b}(z) = q_{a, -b}(-z)$ is elliptic. In particular there exists an even elliptic function solution 
$$
Q := \tfrac{1}{2}(q_{a, -b} + q_{-a, b}) = (-1)^n \frac{\prod_{i = 1}^n \sigma(a_i) \sigma(b_i)}{z^{2n}} + \mbox{higher order terms}.
$$ 

\begin{lemma} \label{tensor-lame}
The fourth order ODE is given by
\begin{equation} \label{4th-ode}
q'''' -2(I_1 + I_2) q'' -6I'q' + ((B_a - B_b)^2 - 2I'')q = 0.
\end{equation}
Here $I = n(n + 1) \wp(z)$, $I_1 = I + B_a$ and $I_2 = I + B_b$.
\end{lemma}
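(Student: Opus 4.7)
The plan is to derive the ODE for $q = w_1 w_2$ by direct differentiation, using the fact that $I_1 - I_2 = B_a - B_b$ is a constant, so that Wronskian-type combinations satisfy simple first-order relations. Since the two factors $w_1, w_2$ satisfy linear second-order equations with the same ``common part'' $I = n(n+1)\wp(z)$, mixed second-order quantities like $w_1' w_2'$ and $w_1 w_2' - w_1' w_2$ should reduce to expressions in $q, q', q''$.

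First I would compute $q' = w_1 w_2' + w_1' w_2$ and
\begin{equation*}
q'' = w_1'' w_2 + 2 w_1' w_2' + w_1 w_2'' = (I_1 + I_2) q + 2 w_1' w_2',
\end{equation*}
which isolates $u := w_1' w_2' = \tfrac{1}{2}(q'' - (I_1+I_2) q)$. Next I would introduce the Wronskian-like quantity $v := w_1 w_2' - w_1' w_2$. A short calculation using the two Lam\'e equations gives
\begin{equation*}
v' = w_1 w_2'' - w_1'' w_2 = (I_2 - I_1) q = -(B_a - B_b) q,
\end{equation*}
while $u' = w_1'' w_2' + w_1' w_2''= I_1 w_1 w_2' + I_2 w_1' w_2 = \tfrac{1}{2}(I_1+I_2) q' + \tfrac{1}{2}(B_a - B_b) v$, using $w_1 w_2' = \tfrac{1}{2}(q'+v)$ and $w_1' w_2 = \tfrac{1}{2}(q' - v)$.

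Then I would differentiate $q'' = (I_1+I_2) q + 2u$ once more and substitute the expression for $u'$, together with $(I_1+I_2)' = 2I'$, to obtain
\begin{equation*}
q''' = 2 I' q + 2(I_1+I_2) q' + (B_a - B_b)\, v.
\end{equation*}
One more differentiation, using $v' = -(B_a - B_b) q$ and $(I_1+I_2)' = 2I'$, yields
\begin{equation*}
q'''' = 2 I'' q + 6 I' q' + 2(I_1+I_2) q'' - (B_a - B_b)^2 q,
\end{equation*}
which rearranges to \eqref{4th-ode}. There is no real obstacle here; the only mild subtlety is recognizing that choosing $v$ as the auxiliary unknown works precisely because $I_1 - I_2$ is a constant, so that $v$ closes up into a first-order system with $q$ alone and the elimination of $u$ and $v$ terminates at order four as expected.
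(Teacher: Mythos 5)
Your proof is correct and is essentially the paper's own computation: both differentiate $q = w_1 w_2$ four times and eliminate the mixed terms using the two Lam\'e equations. The only cosmetic difference is that you package the anti-symmetric combination as the Wronskian-type quantity $v = w_1 w_2' - w_1' w_2$ with $v' = -(B_a - B_b)q$, whereas the paper carries $2(I_1 w_1 w_2' + I_2 w_1' w_2)$ explicitly and substitutes $2w_1'w_2' = q'' - (I_1+I_2)q$ at the last step; the resulting identities are the same.
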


\begin{proof}
This follows from a straightforward computation. Indeed,
\begin{equation*}
\begin{split}
q' &= w_1' w_2 + w_1 w_2', \\
q'' &= (I_1 + I_2) q + 2 w_1' w_2', \\
q''' &= 2I' q + (I_1 + I_2) q' + 2(I_1 w_1 w_2' + I_2 w_1' w_2).
\end{split}
\end{equation*}
Notice that if $a = b$ (or just $B_a = B_b$) then $I_1 = I_2$ and we stop here to get the third order ODE as the symmetric product of the Lam\'e equation. 

In general, we take one more differentiation to get 
\begin{equation*}
\begin{split}
q'''' &= 2I'' q + 4 I' q' + (I_1 + I_2) q'' + 2 I' q' + 2(I_1 + I_2) w_1' w_2' + 4 I_1 I_2 q \\
&= 2(I_1 + I_2) q'' + 6I' q' + (2I'' - (I_1 - I_2)^2) q.
\end{split}
\end{equation*}
This proves the lemma.
\end{proof}

Now we investigate the equation in variable $x = \wp(z)$. To avoid confusion, we denote $\dot{f} = \p f/\p x$ and $f' = \p f /\p z$. 

Let $y^2 = p(x) = 4x^3 - g_2 x - g_3$. Then $\wp' = y$, $\wp'' = 6\wp^2 - \tfrac{1}{2} g_2 = \tfrac{1}{2} \dot{p}(x)$. $\wp''' = 12 \wp \wp' = 12 xy$, $\wp'''' = 12 \wp'^2 + 12 \wp \wp'' = 12 p(x) + 6x\dot{p}(x)$. Also
\begin{equation*}
\begin{split}
q' &= \dot{q} \wp' = y\dot{q}, \\
q'' &= \ddot{q} \wp'^2 + \dot{q} \wp'' = p(x) \ddot{q} + \tfrac{1}{2} \dot{p}(x) \dot{q}, \\
q''' &= \dddot{q} \wp'^3 + 3 \ddot{q} \wp' \wp'' + \dot{q} \wp''', \\
q'''' &= \ddddot{q} \wp'^4 + 6 \dddot{q} \wp'^2 \wp'' + 3 \ddot{q} (\wp'')^2 + 4 \ddot{q} \wp' \wp''' + \dot{q} \wp'''' \\
&= p(x)^2 \ddddot{q} + 3p(x)\dot{p}(x) \dddot{q} + \big(\tfrac{3}{4} \dot{p}(x)^2 + 48 xp(x)\big) \ddot{q} + \big(12p(x) + 6 x \dot{p}(x) \big) \dot{q}.
\end{split}
\end{equation*}
By substituting these into (\ref{4th-ode}) and get the ODE in $x$:
\begin{equation} \label{4th-ODE}
\begin{split}
L_4\,q &:= p^2 \ddddot{q} + 3p \dot{p} \dddot{q} + \big( \tfrac{3}{4} \dot{p}^2 - 2(2(n^2 + n - 12) x + \beta) p\big) \ddot{q} \\
& \quad-\big((2(n^2 + n - 3)x + \beta)\dot{p} + 6(n^2 + n - 2) p\big) \dot{q} \\
&\qquad +\big(\alpha^2 - n(n + 1)\dot{p}\big) q = 0.
\end{split}
\end{equation}
where 
\begin{equation} \label{AB}
\alpha := B_a - B_b \quad \mbox{and} \quad\beta := B_a + B_b.
\end{equation}

For the rest of the proof, we want to discuss when $L_4\, q = 0$ with $\alpha \ne 0$ has a polynomial solution. Here $g_2$ and $g_3$ could be arbitrary, not necessarily satisfy the non-degenerate condition $g_2^3 - 27 g_3^2 \ne 0$. 

Suppose that $q(x)$ is a polynomial in $x$ of degree $m \ge 1$:
\begin{equation} \label{q-poly}
\begin{split}
q(x) = x^m - s_1 x^{m - 1} + s_2 x^{m - 2} - \cdots + (-1)^m s_m,
\end{split}
\end{equation}
which satisfies
\begin{equation} \label{L-mod}
\deg_x L_4\,q(x) \le 1.
\end{equation}
Then we can solve $s_j$ recursively in terms of $\alpha^2$, $\beta$ and $g_2$, $g_3$. 

Indeed, the top degree $x^{m + 2}$ in (\ref{4th-ODE}) has coefficient
\begin{equation*}
\begin{split}
&16 m(m - 1)(m - 2)(m - 3) + 144 m(m - 1)(m - 2) + 108m(m - 1) \\
&\quad - 16(n^2 + n - 12)m(m - 1) - 24(n^2 + n - 3)m \\
&\qquad - 24(n^2 + n - 2)m - 12n(n + 1) \\
&= (m - n) \Big( 4m^3 + (4n + 68) m^2 + (8n - 101) m + 3(n + 1)\Big),
\end{split}
\end{equation*}
which vanishes precisely when $m = n$. This we may assume that $m = n$.

The next order term $x^{n + 1}$ without the $s_1$ factor has coefficient
$$
-8n(n - 1)\beta - 12n\beta = -4n(2n + 1)\beta,
$$
and the coefficient of $-s_1 x^{n + 1}$ is given by
\begin{equation*}
\begin{split}
&16 (n - 1)(n - 2)(n - 3)(n - 4) + 144 (n - 1)(n - 2)(n - 3) \\
&\quad + 108(n - 1)(n - 2) - 16(n^2 + n - 12)(n - 1)(n - 2) \\
&\quad - 24(n^2 + n - 3)(n - 1) - 24(n^2 + n - 2)(n - 1) - 12n(n + 1) \\
&= -8n(2n - 1)(2n + 1).
\end{split}
\end{equation*}
Hence
\begin{equation} \label{s_1}
s_1 = \frac{\beta}{2(2n - 1)}.
\end{equation}

Inductively the $x^{n + 2 - i}$ coefficient in (\ref{4th-ODE}) gives recursive relations to solve $s_i$ in terms of $\beta$, $\alpha^2$ and $g_2, g_3$ for $i = 1, \ldots, n$. It implies that

\begin{lemma} \label{degrees}
For $i = 1, \ldots, n$, there is a polynomial expression
$$
s_i = s_i(\alpha^2, \beta, g_2, g_3) = C_i \beta^i + \cdots
$$ 
which is homogeneous of degree $i$ with $\deg \alpha =\deg \beta = 1$ and $\deg g_2 = 2$, $\deg g_3 = 3$. Moreover, $C_i$ is a non-zero rational number.
\end{lemma}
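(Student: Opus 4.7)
My plan is to combine weight counting with a specialization argument at $\alpha = g_2 = g_3 = 0$. Assign weights $\deg x = 1$, $\deg g_2 = 2$, $\deg g_3 = 3$, $\deg \alpha = \deg \beta = 1$, and formally $\deg s_i := i$. Inspection of \eqref{4th-ODE} shows that every coefficient of $L_4$ is weight-homogeneous in this grading ($p$ has weight $3$, $\dot p$ weight $2$, $p^2$ weight $6$, and the explicit $\beta p$ and $\beta \dot p$ terms have weights $4$ and $3$ respectively), so for $q = \sum_{j = 0}^n (-1)^j s_j x^{n-j}$ the $x^{n+2-i}$-coefficient of $L_4\,q$ is weight-homogeneous of degree $i$. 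Imposing \eqref{L-mod} thus produces, for each $i = 1, \ldots, n$, one weight-homogeneous equation of degree $i$ in $(\alpha^2, \beta, g_2, g_3)$ for the unknowns $s_1, \ldots, s_n$.

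The heart of the argument is to show that this equation isolates $s_i$, i.e.\ that the coefficient $\lambda_i$ multiplying $s_i$ is nonzero. By linearity, $\lambda_i$ equals the coefficient of $x^{n+2-i}$ in $L_4(x^{n-i})$, obtained from the leading-coefficient polynomial already computed in the text (to justify $m = n$ and to solve for $s_1$) by the substitution $m \leftarrow n - i$. That polynomial factors as $(m - n) Q_n(m)$ for a cubic $Q_n$ in $m$ with coefficients depending polynomially on $n$; a direct evaluation shows that $Q_n(n - i) > 0$ for $0 \le n - i \le n - 1$ when $n \ge 1$, so $\lambda_i = (-i)\,Q_n(n - i) \ne 0$ for $1 \le i \le n$. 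Tracking the $x$-degree shifts effected by the various summands of $L_4$ (these range over $\{+2, +1, 0, -1, -2, -3, -4\}$) then shows that the remaining terms in the equation involve only $s_j$ for $j < i$, so the recursion is strictly triangular. Induction on $i$ gives that each $s_i$ is a weight-homogeneous polynomial of degree $i$ in $\alpha^2, \beta, g_2, g_3$ with coefficients in $\Bbb Q$.

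To extract $C_i$, I specialize to $\alpha = g_2 = g_3 = 0$. By homogeneity, $s_i$ reduces to $C_i \beta^i$ under this specialization, since $\beta^i$ is the unique surviving weight-$i$ monomial. The operator $L_4$ simplifies considerably: $p = 4x^3$, $\dot p = 12 x^2$, and the term $\alpha^2 q$ vanishes, so only the top $x$-degree of each coefficient of $L_4$ survives, together with the $\beta$-contributions from $-2\beta p \ddot q - \beta \dot p \dot q$. A direct calculation shows that in this setting the $x^{n+2-i}$-coefficient of $L_4\,q$ involves only $s_i$ (with coefficient proportional to $\lambda_i$) and $s_{i-1}$ (with coefficient proportional to $\beta \cdot 4(n-i+1)(2n-2i+3)$). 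This yields a two-term recursion $\lambda_i s_i = \beta\,\mu_{i-1} s_{i-1}$ with $\mu_{i-1} \ne 0$ for $1 \le i \le n$, whence $C_i = \prod_{j=1}^i \mu_{j-1}/\lambda_j \ne 0$.

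The main technical obstacle I anticipate is the degree bookkeeping in the second and third paragraphs: several summands of $L_4$ with differing differential orders contribute to the same target monomial $x^{n+2-i}$, and one must carefully track the $x$-degree shift of each summand both to confirm $\lambda_i \ne 0$ and to establish the triangular form of the recursion. Once these shifts are tabulated the remainder of the argument becomes routine.
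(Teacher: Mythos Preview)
Your proposal is correct and follows essentially the same route as the paper: the paper also extracts the recursion from the $x^{n+2-i}$-coefficient of $L_4\,q$, invokes the weighted homogeneity, and (deferring to the proof of Lemma~\ref{G1G2}) specializes to $g_2 = g_3 = 0$ to obtain the explicit recursion~\eqref{recur} from which $C_i \ne 0$ follows. Your organization is slightly cleaner in that you read off the pivot $\lambda_i$ directly from the already-computed factorization $(m-n)Q_n(m)$ via $m \leftarrow n - i$ (yielding $Q_n(n-i) = 4(2n-i+1)(2n-2i+1)(2n-2i+3) > 0$, which matches the paper's $m_k$), and you set $\alpha = 0$ as well to reduce to a two-term rather than three-term recursion; these are cosmetic differences, not a different method.
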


A much detailed description will be given in the proof of Lemma \ref{G1G2} and the precise value of $C_i$ can be determined (from (\ref{recur})).

There are still two remaining terms in (\ref{L-mod}), that is,
\begin{equation} \label{consistency}
L_4\,q = F_1(\alpha, \beta, g_2, g_3) x + F_0(\alpha, \beta, g_2, g_3).
\end{equation}


The basic structure of the consistency equations is described by the following two lemmas:

\begin{lemma} \label{F0F1}
We have
\begin{equation*}
\begin{split}
F_1(\alpha, \beta) &= \alpha^{2} G_1(\alpha, \beta) = \alpha^2 ((-1)^{n - 1} s_{n - 1}(\alpha^2, \beta, g_2, g_3) + \cdots), \\
\quad F_0(\alpha, \beta) &= \alpha^{2} G_0(\alpha, \beta) = \alpha^2 ((-1)^n s_n(\alpha^2, \beta, g_2, g_3) + \cdots).
\end{split}
\end{equation*}
The remaining terms have either $g_2$ or $g_3$ as a factor, hence with lower $\alpha, \beta$ degree.
\end{lemma}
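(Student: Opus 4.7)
The plan is to prove Lemma \ref{F0F1} in two steps: first establishing the divisibility $\alpha^2 \mid F_0, F_1$ via a degeneration at $\alpha = 0$, then isolating the leading terms by the complementary degeneration at $g_2 = g_3 = 0$, where \eqref{4th-ODE} becomes equidimensional in $x$. Throughout, $F_0, F_1 \in \Bbb Q[\alpha^2, \beta, g_2, g_3]$ because $L_4$ depends on $\alpha$ only through $\alpha^2$, and the recursion underlying Lemma \ref{degrees} produces $s_1, \ldots, s_n$ as polynomials in these variables.

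First I would set $\alpha = 0$, so $B_a = B_b =: B$ and \eqref{4th-ode} becomes the derivative of the symmetric square $q''' - 4\tilde I q' - 2 \tilde I' q = 0$ of the Lam\'e equation $w'' = \tilde I w$ with $\tilde I = n(n+1)\wp + B$. For any $a$ on $Y_n$ above $B$, the Hermite--Halphen product
\[
w_a(z)\, w_{-a}(z) = \prod_{i=1}^n \frac{\sigma(z - a_i)\sigma(z + a_i)}{\sigma(z)^{2n}}
\]
is an even elliptic function; the classical identity $\sigma(z-a)\sigma(z+a) = -\sigma^2(z)\sigma^2(a)(\wp(z) - \wp(a))$ shows it equals a nonzero constant times the monic polynomial $q_{HH}(x) := \prod_{i=1}^n (x - \wp(a_i))$, which therefore solves the symmetric square and hence \eqref{4th-ODE} at $\alpha = 0$. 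Since the recursion of Lemma \ref{degrees} characterizes $q|_{\alpha=0}$ uniquely as the monic degree-$n$ polynomial annihilating the $x^{n+2}, x^{n+1}, \ldots, x^2$ coefficients of $L_4 q$, we must have $q|_{\alpha=0} = q_{HH}$. Consequently $F_0|_{\alpha = 0} = F_1|_{\alpha = 0} = 0$ identically in $\beta, g_2, g_3$, so $\alpha^2 \mid F_0, F_1$.

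For the second step I would specialize to $g_2 = g_3 = 0$, so $p = 4x^3$, $\dot p = 12x^2$, $\ddot p = 24x$. Each term of $L_4$ then acts as a pure degree-shift on monomials in $x$: the $(\alpha, \beta)$-free terms shift by $+2$, the $\beta$-terms by $+1$, and the $\alpha^2$-term by $0$. Writing $q = \sum_i q_i x^{n-i}$ with $q_i = (-1)^i s_i$, the coefficient of $x^k$ in $L_4 q$ reduces to a three-term relation
\[
A_k\, q_{n-k+2} + B_k\, \beta\, q_{n-k+1} + \alpha^2\, q_{n-k},
\]
where direct computation gives $B_k = -8(k-1)(k-2) - 12(k-1) = -4(k-1)(2k-1)$. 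The crucial observation is $B_1 = 0$. Using $q_{n+1} = q_{n+2} = 0$, reading off $k = 1$ and $k = 0$ then yields
\[
F_1|_{g=0} = \alpha^2\, q_{n-1}|_{g=0} = (-1)^{n-1} \alpha^2\, s_{n-1}|_{g=0}, \qquad F_0|_{g=0} = \alpha^2\, q_n|_{g=0} = (-1)^n \alpha^2\, s_n|_{g=0},
\]
which are the asserted leading terms. Turning $g_2, g_3$ back on contributes only corrections bearing a factor of $g_2$ or $g_3$, and these have strictly smaller $(\alpha, \beta)$-degree by the weighted homogeneity in Lemma \ref{degrees}.

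The delicate point is the uniqueness step in the first part: one must verify that $q_{HH}$, whose coefficients are a priori algebraic in $B$, actually coincides with $q|_{\alpha = 0}$ as a polynomial expression in $\beta, g_2, g_3$. This rests on the non-vanishing of the recursion's leading coefficients $A_k(n)$ for $k = n+1, \ldots, 2$ (implicit in the well-definedness of $s_1, \ldots, s_n$ in Lemma \ref{degrees}), so the system $c_{n+2} = \cdots = c_2 = 0$ has a unique formal polynomial solution; generic specialization then forces $q_{HH}$ and $q|_{\alpha = 0}$ to agree as polynomials over $\Bbb Q[\beta, g_2, g_3]$.
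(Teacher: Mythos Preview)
Your proof is correct and follows essentially the same two-step strategy as the paper: first set $\alpha=0$ and exhibit a monic polynomial solution of $L_4 q = 0$ (the paper cites the same second symmetric product of the Lam\'e equation, without spelling out the Hermite--Halphen identification or the uniqueness argument you give) to obtain $\alpha^2\mid F_i$; then set $g_2=g_3=0$ to isolate the leading pieces. For the second step the paper argues slightly more directly, observing that every term of $L_4$ except $\alpha^2 q$ carries a factor $p$ or $\dot p$, so at $g_2=g_3=0$ one has $L_4 q \equiv \alpha^2 q \pmod{x^2}$ immediately, which is the content of your three-term computation and the vanishing $B_1=0$.
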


\begin{proof}
Equation (\ref{consistency}) gives 
\begin{equation*}\begin{split}
F_1(\alpha, \beta) &= (-1)^{n - 1} \alpha^2 s_{n - 1} + \mbox{terms in $s_1, \cdots, s_{n - 2}$}, \\
F_0(\alpha, \beta) &= (-1)^n \alpha^2 s_n + \mbox{terms in $s_1, \cdots, s_{n - 1}$}. 
\end{split}
\end{equation*}
We note that if $\alpha = 0$, then for any $\beta$ there is a solution $q(x)$ to $L_4(q) = 0$ which is a polynomial in $x$ of degree $n$. 

Indeed $q(x) = \prod_{i = 1}^n (x - x_i)$, with $\beta = 2(2n - 1) \sum_{i = 1}^n x_i$, which comes from the Lam\'e equation (see \cite{CLW, Whittaker}). Thus $F_1(0, \beta) = 0 = F_0(0, \beta)$. Since $F_i$ depends on $\alpha^2$, we have $F_i(\alpha, \beta) = \alpha^2 G_i(\alpha, \beta)$, $i = 0, 1$, for some homogeneous polynomials $G_0$, $G_1$ in $\alpha^2$, $\beta$, $g_2$, $g_3$ of degree $n$ and $n - 1$ respectively, and $G_i$'s can be written as
\begin{equation*}
\begin{split}
G_1(\alpha, \beta) &= (-1)^{n - 1} s_{n - 1} + \cdots, \\
G_0(\alpha, \beta) &= (-1)^n s_n + \cdots.
\end{split}
\end{equation*}

To see the dependence of the remaining terms on $g_2$ and $g_3$, we let $g_2 = 0 = g_3$, and then $L_4(q) \equiv \alpha^2 ((-1)^{n - 1} s_{n - 1} x + (-1)^n s_n) \pmod{x^2}$ because both $p(x) = 4x^3$ and $\dot{p}(x) = 12 x^2$ vanish modulo $x^2$. Thus we have $F_1(\alpha, \beta) = (-1)^{n - 1}\alpha^2 s_{n - 1}$ and $F_0(\alpha, \beta) = (-1)^n \alpha^2 s_n$ whenever $g_2 = 0 = g_3$. This proves the lemma.
\end{proof}


\begin{lemma} \label{G1G2}
The polynomials $G_1$ and $G_0$ have no common factors for any $g_2, g_3$.
\end{lemma}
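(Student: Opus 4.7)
The plan is to reduce to the case $g_2 = g_3 = 0$, prove coprimality there via an explicit recursion, and lift. At the specialization, Lemma \ref{F0F1} combined with Lemma \ref{degrees} (which asserts $s_i$ contains the non-vanishing term $C_i \beta^i$ with $C_i \in \Bbb Q^\times$) gives
$$
G_0|_{g_2=g_3=0} = (-1)^n s_n(\alpha^2,\beta,0,0), \qquad G_1|_{g_2=g_3=0} = (-1)^{n-1} s_{n-1}(\alpha^2,\beta,0,0),
$$
both non-zero. In particular neither $g_2$ nor $g_3$ divides $G_0$ or $G_1$. Any irreducible common factor $P \in \Bbb Q[\alpha^2, \beta, g_2, g_3]$ of $G_0, G_1$ is weighted-homogeneous of positive weighted degree, and its restriction $P|_{g_2=g_3=0}$ is weighted-homogeneous of the same degree in $\alpha^2, \beta$ alone. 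This restriction is non-zero (were it zero, every monomial of $P$ would involve $g_2$ or $g_3$, forcing $g_2 \mid P$ or $g_3 \mid P$ by irreducibility, contradicting $g_2, g_3 \nmid G_0$) and hence non-constant. Thus it suffices to show that $s_n|_{g_2=g_3=0}$ and $s_{n-1}|_{g_2=g_3=0}$ are coprime in $\Bbb Q[\alpha^2, \beta]$.

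At $g_2 = g_3 = 0$ the operator $L_4$ in \eqref{4th-ODE} has $p = 4x^3$ and $\dot p = 12 x^2$, so each summand becomes a monomial in $x$. Matching the coefficient of $x^{n+2-i}$ in $L_4 q = 0$ collapses to a three-term recursion of the form
$$
\tilde R_i\, s_i + \tilde E_i\, \beta\, s_{i-1} + \alpha^2\, s_{i-2} = 0, \qquad 1 \le i \le n,
$$
with $\tilde R_i, \tilde E_i \in \Bbb Q$ depending only on $n$ and $i$. A direct expansion yields $\tilde E_i = 4(n-i+1)(2(n-i)+3) \ne 0$ for $1 \le i \le n$, and $\tilde R_1 = -8n(2n-1)(2n+1) \ne 0$, consistent with $s_1 = \beta/(2(2n-1))$ from \eqref{s_1}. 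The crucial technical point is to verify $\tilde R_i \ne 0$ for $2 \le i \le n$: as a polynomial in the index, $\tilde R_i$ is quartic with the known root coming from the identity that singled out $m = n$ in the initial derivation; factoring this root out leaves a cubic in the index which one then shows has no root in $\{2, \ldots, n\}$.

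With the recursion in hand, $\gcd(s_i, s_{i-1}) = 1$ in $\Bbb Q[\alpha^2, \beta]$ follows by induction on $1 \le i \le n$. For the inductive step, an irreducible common factor $P$ of $s_i$ and $s_{i-1}$ must divide $\alpha^2 s_{i-2}$ by the recursion; the inductive hypothesis together with $P \mid s_{i-1}$ gives $P \nmid s_{i-2}$, hence $P = \alpha^2$ up to a scalar. Setting $\alpha = 0$ in the recursion reduces it to $\tilde R_i\, s_i(0,\beta) = -\tilde E_i\, \beta\, s_{i-1}(0,\beta)$, which iterates to $s_i(0, \beta) = c_i \beta^i$ with $c_i \in \Bbb Q^\times$, contradicting $\alpha^2 \mid s_i$. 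The case $i = n$ completes the proof.

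The principal obstacle is the non-vanishing $\tilde R_i \ne 0$ for $2 \le i \le n$. This is an elementary but delicate arithmetic assertion about an explicit quartic polynomial in $\Bbb Q[n, i]$, and handling it uniformly in $n$—rather than case-by-case for small $n$—is the main technical work of the argument; once it is in place, the rest reduces to a straightforward induction combined with weighted-homogeneity bookkeeping.
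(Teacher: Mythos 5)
Your strategy coincides with the paper's: specialize to $g_2 = g_3 = 0$, where $G_1 = (-1)^{n-1}s_{n-1}$ and $G_0 = (-1)^{n}s_n$, prove coprimality of $s_{n-1}$ and $s_n$ there via the three-term recursion, and lift using weighted homogeneity. Two points prevent the proposal from being a complete proof. The first and decisive one: you leave unproven the non-vanishing $\tilde R_i \ne 0$ for $2 \le i \le n$, which you yourself flag as ``the main technical work.'' Without it the recursion neither determines the $s_i$ nor supports the coprimality induction, so the central claim of the lemma rests on an unverified arithmetic assertion. The paper closes exactly this point by computing the coefficient in closed form and factoring it into visibly non-zero linear factors,
\[
m_k = -4(k+2)\bigl(2n-(k+1)\bigr)\bigl(2n-(2k+1)\bigr)\bigl(2n-(2k+3)\bigr), \qquad 0 \le k \le n-2,
\]
(this is your $\tilde R_i$ with $i = k+2$); each factor is positive in the stated range. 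Your proposed route --- factor out the root singled out by the leading-order computation and analyze the residual cubic --- is plausible but is not carried out, and it is precisely the step that cannot be waved through.

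Second, your justification that an irreducible common factor $P$ has non-zero restriction to $g_2 = g_3 = 0$ is incorrect as stated: ``every monomial of $P$ involves $g_2$ or $g_3$'' does not force $g_2 \mid P$ or $g_3 \mid P$. For instance $g_2\alpha^2 + g_3\beta$ is irreducible, weighted homogeneous, vanishes identically at $g_2 = g_3 = 0$, and is divisible by neither $g_2$ nor $g_3$. The conclusion you want is nevertheless true, but the correct argument is the one implicit in the paper: grade by the ordinary degree in $(\alpha,\beta)$; by Lemmas \ref{degrees} and \ref{F0F1} the top graded pieces of $G_1$ and $G_0$ are exactly $(-1)^{n-1}s_{n-1}(\alpha^2,\beta,0,0)$ and $(-1)^{n}s_n(\alpha^2,\beta,0,0)$, which are free of $g_2, g_3$ and have non-zero leading $\beta$-coefficients; since top graded pieces multiply, coprimality of these specializations rules out any non-constant common factor. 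This second step is repairable with that observation; the missing non-vanishing of $\tilde R_i$ is the genuine gap. (As a minor aside, your formula $\tilde E_i = 4(n-i+1)(2(n-i)+3)$ is consistent with the unfactored expression for $n_k$ in the paper, so that part of your computation checks out.)
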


\begin{proof}
We consider first the special case $g_2 = g_3 = 0$. Then (\ref{L-mod}) becomes
\begin{equation} \label{4th-ODE-0}
\begin{split}
&16x^6 \ddddot{q} + 144x^5 \dddot{q} + \big( 108x^4 - 8x^3(2(n^2 + n - 12) x + \beta) \big) \ddot{q} \\
& \quad-\big(12x^2(2(n^2 + n - 3)x + \beta) + 24x^3(n^2 + n - 2) \big) \dot{q} \\
&\qquad +\big(\alpha^2 - 12n(n + 1)x^2\big) q \equiv 0 \pmod{\Bbb C \oplus \Bbb C x}.
\end{split}
\end{equation}

The coefficient of $x^{n - k}$, $k = 0, \ldots, n - 2$, gives recursive equation
\begin{equation} \label{recur}
(-1)^k (m_k\, s_{k + 2} + n_k \beta\, s_{k + 1} + \alpha^2 s_k) = 0,
\end{equation}
where the constants $m_k$ and $n_k$ are given by 
\begin{equation*}
\begin{split}
m_k &= 16 (n - (k + 2))(n - (k  + 3))(n - (k + 4))(n - (k + 5)) \\
&\quad+ 144 (n - (k + 2))(n - (k  + 3))(n - (k + 4)) \\
&\qquad+(108 - 16(n^2 + n - 12)) (n - (k + 2))(n - (k  + 3)) \\
&\qquad\quad- 24(2n^2 + 2n - 5) (n - (k + 2)) - 12 n(n + 1) \\
&= -4(k + 2) (2n - (k + 1)) (2n - (2k + 1)) (2n - (2k + 3)), \\
n_k &= (8 (n - (k + 1))(n - (k + 2)) + 12(n - (k + 1))) \\
&= 4(n - (k - 1))(n - (k + 1)).
\end{split}
\end{equation*}
Since $k \le n - 2$, we have $m_k \ne 0$ and $n_k \ne 0$.

Let $\gamma(\alpha, \beta)$ be a non-trivial common factor of both $G_1$ and $G_0$. 

In the case $g_2 = g_3 = 0$ we have $G_1 = (-1)^{n - 1}s_{n - 1}$ and $G_0 = (-1)^n s_n$. Then $\gamma$ and $\alpha$ are co-prime, because if $\alpha = 0$ then $s_{n - 1}(0, \beta) = c_{n - 1} \beta^{n - 1}$ and $s_n(0, \beta) = c_n \beta^n$ for some non-zero constants $c_{n - 1}$ and $c_n$. By (\ref{recur}) for $k = n - 2$, we have $\gamma \mid s_{n - 2}(\alpha^2, \beta, 0, 0)$ too. By induction on $k$ for $k = n - 3, \ldots, 0$ in decreasing order we conclude that $\gamma \mid s_0 = 1$, which leads to a contradiction.

For $g_2, g_3 \in \Bbb C$, we see by Lemma \ref{F0F1} that the leading terms of $G_1$, $G_0$, as polynomials of $\alpha$ and $\beta$, are $(-1)^{n - 1} s_{n - 1}(\alpha^2, \beta, 0, 0)$ and $(-1)^{n} s_{n}(\alpha^2, \beta, 0, 0)$ respectively. Since $s_{n - 1}(\alpha^2, \beta, 0, 0)$ and $s_n(\alpha^2, \beta, 0, 0)$ are co-prime, we conclude that $G_1(\alpha, \beta, g_2, g_3)$ and $G_0(\alpha, \beta, g_2, g_3)$ are also co-prime. The proof is complete.
\end{proof}

\begin{proposition} \label{lame-pair}
The common zeros of $G_1 = 0$ and $G_0 = 0$ are precisely given by the pair of branch points $(a, b)$ corresponding to Lame functions of the same type. If $\bar X_n$ is non-singular, there are exactly $n(n - 1)$ such ordered pairs $(a, b)$'s.
\end{proposition}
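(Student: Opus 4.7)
The plan is to prove both directions of Proposition~\ref{lame-pair} simultaneously by combining an explicit forward construction with a B\'ezout count: one exhibits $n(n-1)$ common zeros of $G_0, G_1$ arising from same-type branch pairs, then bounds the total number of common zeros by $n(n-1)$ via B\'ezout, which forces saturation and identifies the common-zero set with the set of ordered same-type branch pairs.

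For the forward direction, the key input is the classical factorization of Lam\'e functions at branch points. At a branch point $a$ of species $l_i$, the Hermite--Halphen ansatz admits a presentation $w_a(z) = R_i(\wp(z))^{1/2} P_a(\wp(z))$, where the radical depends only on the species---$R_0 = 1$ for type O, $R_i = \wp - e_i$ for type I, $R_i = (\wp - e_j)(\wp - e_k)$ for type II (with $\{j,k\} = \{1,2,3\}\setminus\{i\}$), and $R_0 = (\wp')^2 = 4\prod_i(\wp - e_i)$ for type III---while $P_a$ is a polynomial in $\wp$. This follows from the pairing structure $\{a_i\} = \{-a_i\}$ together with the identity $\sigma(z-p)\sigma(z+p) = -\sigma(z)^2(\wp(z) - \wp(p))$ and its half-period analogues. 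When $a$ and $b$ are of the same species, the two radicals agree, so $q := w_a w_b = R_i(\wp) P_a(\wp) P_b(\wp)$ is a polynomial in $x = \wp(z)$ and yields a polynomial solution of \eqref{4th-ODE}. Smoothness of $\bar X_n$ guarantees that the roots of $l_i(B)$ are distinct, so $B_a \ne B_b$ and $\alpha \ne 0$, whence $F_0 = F_1 = 0$ forces $G_0(\alpha, \beta) = G_1(\alpha, \beta) = 0$. A direct count from Proposition~\ref{4lame} produces exactly $n(n-1)$ ordered same-type pairs: for $n = 2k$, the total is $(k+1)k + 3k(k-1) = 4k^2 - 2k$ from type O and the three type II factors; for $n = 2k+1$, it is $k(k-1) + 3(k+1)k = 4k^2 + 2k$ from type III and the three type I factors, both equal to $n(n-1)$.

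For the upper bound, Lemma~\ref{F0F1} shows that the leading $(\alpha, \beta)$-homogeneous parts of $G_0$ and $G_1$---viewed as polynomials in $\alpha, \beta$ with $g_2, g_3$ as parameters---are $(-1)^n s_n(\alpha^2, \beta, 0, 0)$ and $(-1)^{n-1} s_{n-1}(\alpha^2, \beta, 0, 0)$, of total degrees $n$ and $n-1$ respectively. By the $g_2 = g_3 = 0$ argument inside the proof of Lemma~\ref{G1G2}, these leading forms are coprime, so the projective closures in $\Bbb P^2_{[\alpha:\beta:1]}$ share no common points at infinity. B\'ezout's theorem then bounds the number of affine common zeros, counted with multiplicity, by $n(n-1)$. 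Since the forward construction already exhibits $n(n-1)$ distinct common zeros (distinct because different ordered pairs give different $(\pm(B_a-B_b), B_a+B_b)$), B\'ezout is saturated; every common zero is simple and lies in the same-type locus. The main technical obstacle is the case-by-case verification of the species-dependent factorization $w_a = R_i(\wp)^{1/2} P_a(\wp)$: although classical (Halphen--Whittaker), it demands careful bookkeeping across the four half-period variants of the $\sigma$-function identity. Once these factorizations are in hand, the B\'ezout step is essentially formal and relies only on the coprimality already established in Lemma~\ref{G1G2}.
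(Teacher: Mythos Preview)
Your proof is correct and follows essentially the same strategy as the paper: exhibit $n(n-1)$ ordered same-type branch pairs as common zeros of $G_0, G_1$, bound the common-zero count by $n(n-1)$ via B\'ezout using the coprimality from Lemma~\ref{G1G2}, and conclude saturation. The only minor difference is in the forward step: you verify that $q = w_a w_b$ is a polynomial in $\wp$ via the classical Halphen species-factorization $w_a = R_i(\wp)^{1/2} P_a(\wp)$, whereas the paper observes that for same-type branch points one can arrange representatives with $\sum a_i = \sum b_i$ and $\sum \zeta(a_i) = \sum \zeta(b_i)$, so that $q_{a,-b} = q_{-a,b}$ is an even elliptic function and hence already a polynomial in $\wp$.
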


\begin{proof}
It suffices to prove the (generic) case that $\bar X_n$ is non-singular, namely the case that all the Lam\'e functions are distinct. The general case follows from the non-singular case by a limiting argument. 

For any two Lam\'e functions $w_a$, $w_b$ of the same type, it is easy to see that we may arrange the representatives of $a$ and $b$ so that \eqref{match} holds. It follows that $q := q_{a, -b} = q_{-a, b}$ (see \eqref{q-rat}) is an even elliptic function solution to \eqref{4th-ode}, or equivalently $q(x)$ is a polynomial solution to $L_4\, q(x) = 0$.

From the above discussion, $(\alpha, \beta)$ must be a common root of $G_1$ and $G_0$ (where $\alpha = B_a - B_b$, $\beta = B_a + B_b$). By Lemma \ref{degrees} and \ref{F0F1}, we have $\deg G_1 = n - 1$ and $\deg G_0 = n$ and $G_1$, $G_0$ are co-prime to each other by Lemma \ref{G1G2}. Hence by Bezout theorem there are at most $n(n - 1)$ common roots.

On the other hand, the number of such ordered pairs can be determined by Proposition \ref{4lame}. Indeed, if $n = 2k$ is even, then we have 
$$
(k + 1)k + 3k(k - 1) = 4k^2 - 2k = n(n - 1)
$$
such pairs. If $n = 2k + 1$ is odd, the number of pairs is given by
$$
k(k - 1) + 3(k + 1)k = 4k^2 + 2k = n(n - 1).
$$
 Hence in all cases the number of ordered pairs coming from the Lam\'e functions of the same type agrees with the Bezout degree of the polynomial system defined by $G_1 = 0 = G_0$. Thus these $n(n - 1)$ pairs form the zero locus as expected (and there is no infinity contribution). 
\end{proof}

The above discussions from Lemma \ref{tensor-lame} to Proposition \ref{lame-pair} constitute a complete proof of Theorem \ref{key-claim}. Here is a summary: We already know that $Q$ is an even elliptic function with singularity only at $0 \in E$. Thus
$$
Q(x) = c \prod_{i = 1}^n (\wp(z) - \wp(c_i)) =: c \prod_{i = 1}^n (x - x_i) 
$$
is a polynomial solution to the ODE (\ref{4th-ODE}) with $\alpha = B_a - B_b$, $\beta = B_a + B_b$.

Since $\alpha = B_a - B_b \ne 0$, by Lemma \ref{F0F1} $(\alpha, \beta)$ must be a common root of $G_1(\alpha, \beta) = 0 = G_0(\alpha, \beta)$. Then Proposition \ref{lame-pair} says that $(\alpha, \beta)$ is pair of Lam\'e functions of the same type. This proves Theorem \ref{key-claim}. \medskip

For future reference, we combine Theorem \ref{key-claim} and Proposition \ref{lame-pair} into the following statement on a fourth order ODE which arises from the \emph{tensor product of two different (integral) Lam\'e equations} with the same parameter $n$. 

Due to its importance, we will give a second (shorter and more direct) proof of the part corresponding to Theorem \ref{key-claim}. 

\begin{theorem}
Let $I(z) = n(n + 1) \wp(z)$. The fourth order ODE
\begin{equation} \label{4th-ode'}
q''''(z) - 2(I + \beta) q''(z) - 6I' q'(z) + (\alpha^2 - 2I'') q(z) = 0
\end{equation}
with $\alpha \ne 0$ has an elliptic function solution if and only if $(\alpha, \beta)$ is a pair of common root to $G_0(\alpha, \beta) = 0$ and $G_1(\alpha, \beta) = 0$. Moreover, this solution must be even.
\end{theorem}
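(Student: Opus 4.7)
The proof is a repackaging of Lemma~\ref{tensor-lame}, the consistency analysis in Lemmas~\ref{F0F1} and \ref{G1G2}, and Proposition~\ref{lame-pair}, treating the two directions separately and extracting the evenness as a byproduct.

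For sufficiency, suppose $G_0(\alpha, \beta) = G_1(\alpha, \beta) = 0$ with $\alpha \neq 0$. By Proposition~\ref{lame-pair}, this common zero is realized by a pair of branch points $(a, b) \in Y_n \setminus X_n$ corresponding to Lam\'e functions of the same type, with $\alpha = B_a - B_b$ and $\beta = B_a + B_b$. Since $\{a_i\} = \{-a_i\}$ and $\zeta$ is odd, the ansatz \eqref{ansatz} gives $w_a(-z) = w_a(z)$ and similarly $w_b(-z) = w_b(z)$, so $q := w_a w_b$ is an even elliptic function; Lemma~\ref{tensor-lame} then confirms that $q$ solves \eqref{4th-ode'}.

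For necessity, suppose \eqref{4th-ode'} has an elliptic solution $q(z)$. Since $I$ and $I''$ are even while $I'$ is odd, the ODE is invariant under $z \mapsto -z$, so $q(-z)$ is also a solution; hence the even part $Q(z) := \tfrac{1}{2}(q(z) + q(-z))$ is an even elliptic solution. Granting $Q \not\equiv 0$ for the moment, $Q$ has poles only at $0 \in E$ and so takes the form $Q(z) = c \prod_{i=1}^m (\wp(z) - x_i)$. Passing to the coordinate $x = \wp(z)$ converts \eqref{4th-ode'} into the polynomial identity \eqref{4th-ODE}; the top-degree analysis carried out just before \eqref{s_1} forces $m = n$, the recursion determines $s_1, \ldots, s_n$ uniquely, and the two leftover coefficients $F_1 = \alpha^2 G_1$ and $F_0 = \alpha^2 G_0$ (Lemma~\ref{F0F1}) must vanish. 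Because $\alpha \neq 0$, this yields $G_0(\alpha, \beta) = G_1(\alpha, \beta) = 0$.

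The main obstacle is verifying $Q \not\equiv 0$, which simultaneously delivers the ``must be even'' conclusion. My plan is to rule out nonzero odd elliptic solutions directly. Any such solution has the form $O(z) = \wp'(z)\, r(\wp(z))$ with $r$ a polynomial in $x$, since its poles all lie over $0 \in E$; substituting into \eqref{4th-ode'} and dividing through by the common odd factor $\wp'$ yields a fourth-order polynomial equation for $r(x)$. A leading-degree analysis parallel to the even case produces consistency polynomials $\tilde G_0, \tilde G_1$ in $(\alpha, \beta, g_2, g_3)$, and a Bezout-type count modeled on Proposition~\ref{lame-pair} should show that their common zeros are again exhausted by Lam\'e pairs. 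But at such pairs the tensor product produces only the single even solution $w_a w_b$ (because $w_a = w_{-a}$ and $w_b = w_{-b}$ when $a = -a$ and $b = -b$), so no nonzero odd elliptic solution exists. Thus $Q = q$ and is automatically even, completing the proof.
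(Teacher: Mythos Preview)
Your overall architecture is right, and the sufficiency direction together with the even half of necessity are handled correctly by citing Proposition~\ref{lame-pair}, Lemma~\ref{tensor-lame}, and the recursion leading to Lemma~\ref{F0F1}. The genuine gap is in your plan for ruling out odd elliptic solutions.

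The proposed Bezout argument cannot work as stated. In Proposition~\ref{lame-pair} the count succeeds because Lam\'e pairs \emph{do} furnish even elliptic solutions, so they visibly lie on $G_0 = G_1 = 0$ and fill up the Bezout bound. In the odd case you yourself observe that at a Lam\'e pair the tensor product collapses to the single even function $w_a w_b$; there is no odd elliptic solution there to witness. Hence you have no way to exhibit Lam\'e pairs as common zeros of your hypothetical $\tilde G_0, \tilde G_1$, and the count has nothing to count. More fundamentally, the ``leading-degree analysis parallel to the even case'' does not produce a value of $l$ followed by a recursion and two leftover consistency equations; it already fails at the top degree, so $\tilde G_0, \tilde G_1$ never materialize.

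The paper's argument is exactly this observation, made explicit. Writing a nonzero odd elliptic solution as $q(z) = c\,\wp'(z)\,f(\wp(z))$ with $\deg f = l$ and substituting into \eqref{4th-ode'}, the coefficient of $x^{l+2}$ depends only on $n$ and $l$ and simplifies to
\[
4n(n+1) = (2l+3)(2l+5),
\]
which is impossible because the right-hand side is odd. So no nonzero odd elliptic solution exists for \emph{any} $(\alpha,\beta)$, your $Q$ is automatically nonzero whenever $q$ is, and the evenness claim falls out for free. Replace your final paragraph with this single top-coefficient computation and the proof is complete.
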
 

\begin{proof}[Second Proof to Theorem \ref{key-claim}]
Following the definition of $q_{a, -b}(z)$ in \eqref{q-rat}, we now consider the odd elliptic solution to \eqref{4th-ode'} ($=$ (\ref{4th-ode})) instead: 
$$
q(z) = \tfrac{1}{2}(q_{a, -b}(z) - q_{-a, b}(z)),
$$
which has a pole of order $3 + 2l$ at $0 \in E$ with $l \le n - 2$. Thus $q(z)/\wp'(z)$ is an even elliptic function with the only pole at $0$ since $q(\tfrac{1}{2}\omega_i) = 0$ for $1 \le i \le 3$. If $q(z)$ does not vanish completely, then
$$
q(z) = c \wp'(z) \prod_{i = 1}^l (\wp(z) - \wp(c_i)) =: c \wp'(z) f(\wp(z)),  
$$ 
where $f(x) = \prod_{i = 1}^l (x - \wp(c_i)) = x^l - s_1 x^{l - 1} + \cdots + (-1)^l s_l$.

By Lemma \ref{tensor-lame}, $q(z)$ satisfies
\begin{equation} \label{4th-ode1}
\begin{split}
&q''''(z) - 2(\beta + 2n(n + 1) \wp(z)) q''(z) \\
&\qquad - 6n(n + 1)\wp'(z) q'(z) + (\alpha^2 - 2n(n + 1) \wp''(z)) q(z) = 0.
\end{split}
\end{equation}
By straightforward calculations, we can compute all derivatives of $q$ in terms of derivatives of $\wp(z)$ and $f'(x)$. For example,
\begin{equation*}
\begin{split}
q'(z) &= \wp''(z) f(x) + \wp'(z)^2 f'(x),\\
q''(z) &= \wp'''(z) f(x) + 3\wp''(z) \wp'(z) f'(x) + \wp'(z)^3 f''(x), \quad\mbox{etc.}
\end{split}
\end{equation*}
Then (\ref{4th-ode1}) is equivalent to 
\begin{equation*}
\begin{split}
&f(x)\Big((360 - 96n(n + 1))x^2 - 24 \beta x + (4n(n + 1) - 18)g_2 + \alpha^2\Big) \\
+& f'(x) \Big((1320 - 96n(n + 1))x^3 - 36 \beta x^2 \\
&\qquad \qquad + (12n(n + 1) - 150)g_2 x + (6n(n + 1)- 60) g_3 + 3\beta g_2\Big) \\
+& f''(x)\Big( (1020 - 16n(n + 1))x^4 - 8\beta x^3 + (4n(n + 1) - 210)g_2 x^2 \\ 
&\qquad\qquad + (2\beta g_2 + (4n(n + 1) - 120) g_3) x  + 2\beta g_3 + \tfrac{15}{4} g_2^2\Big) \\
+& f'''(x) (60 x^2 - 30 g_2) (4x^3 - g_2 x - g_3) \\
+& f''''(x) (4x^3 - g_2 x - g_3)^2 = 0.
\end{split}
\end{equation*}

By comparing the coefficients of $x^{l + 2}$, we obtain
\begin{equation*}
\begin{split}
&(360 - 96n(n + 1)) + l(1320 - 96 n(n + 1)) + l(l - 1) (1020 - 16 n(n + 1)) \\
&\quad + 240l(l - 1)(l - 2) + 16 l(l - 1)(l -2)(l - 3) = 0.
\end{split}
\end{equation*}
After simplification, this is reduced to
$$
4n(n + 1) = (2l + 3)(2l + 5),
$$
which obviously leads to a contradiction since the RHS is odd. Therefore we must have $q \equiv 0$ from the beginning. That is, $\{a_i, -b_i\} = \{-a_i, b_i\}$.

If one of $a, b$ does not correspond to a Lam\'e function, say $a \in X_n$, then $\{a_1, \cdots, a_n\} \cap \{-a_1, \cdots, -a_n\} = \emptyset$ and we conclude that $\{a_i\} = \{b_i\}$. Otherwise $a$ and $b$ correspond to Lam\'e functions of the same type. 
\end{proof}

%
%
%

\begin{example} \label{ex:n=2}
For $n = 2$, $\beta = B_a + B_b$, $\alpha = B_a - B_b$, we have 
\begin{equation*}
\begin{split}
s_1 = \tfrac{1}{6}\beta, \qquad s_2 = \tfrac{1}{36} \beta^2 + \tfrac{1}{72}\alpha^2 - \tfrac{1}{4} g_2.
\end{split}
\end{equation*}

The first compatibility equation from $x^1$ is
$$
s_1(\alpha^2 + 36 g_2) - 6\beta g_2 = 0.
$$
After substituting $s_1$ we get
\begin{equation} \label{c2-1}
\tfrac{1}{6}\alpha^2 \beta = 0.
\end{equation}

The second compatibility equation from $x^0$ is
$$
s_2(\alpha^2 + 6g_2) - s_1(\beta g_2 + 24 g_3) + 4 \beta g_3 + \tfrac{3}{2} g_2^2 = 0.
$$
By substituting $s_1$, $s_2$ and noticing the (expected) cancellations we get
\begin{equation} \label{c2-2}
\alpha^2 (\tfrac{1}{36} \beta^2 + \tfrac{1}{72} \alpha^2 - \tfrac{1}{6} g_2) = 0.
\end{equation}

If $B_a \ne B_b$ then (\ref{c2-1}) implies that $B_b = -B_a$ and then (\ref{c2-2}) leads to
$$
B_a^2 = 3g_2 \Longrightarrow \wp(a_1) + \wp(a_2) = \pm \sqrt{g_2/3}.
$$
By Example \ref{lame-curve} (2), such $a \in \bar X_2$ lies in the branch loci of the hyperelliptic (Lam\'e) curve. In particular, $a, b\in \sigma^{-1}(0)$ and they are excluded by the assumption in Proposition \ref{key-claim}. Denote by $\wp(\pm q_{\pm}) = \pm \sqrt{g_2/12}$. Then $a := \{q_+, -q_+\} \ne b := \{q_-, -q_-\}$ unless $g_2 = 0$. When $g_2 \ne 0$, $\z_2$ fails to distinguish the two points $a$ and $b$. When $g_2 = 0$ (equivalently $\tau = e^{\pi i/3}$), $a = b$ becomes a (singular) branch point for $\sigma: \bar X_2 \to E_\tau$.
\end{example}

\begin{example}
For $n = 3$, $\beta = B_a + B_b$, $\alpha = B_a - B_b$. Then
\begin{equation*}
\begin{split}
s_1 &= \tfrac{1}{10} \beta,\\
s_2 &= \tfrac{1}{600}(4\beta^2 + \alpha^2 - 150 g_2), \\
s_3 &= \tfrac{1}{3600} (2 \beta^3 + 3\alpha^2\beta - 120\beta g_2 + 900g_3).
\end{split}
\end{equation*}

The two compatibility equations from $x^1$ and $x^0$ are
\begin{equation*}
\begin{split}
0 &=\tfrac{1}{600}\alpha^2(4\beta^2 + \alpha^2 + 60g_2),\\
0 &= \tfrac{1}{3600} \alpha^2(2\beta^3 + 3\alpha^2 \beta - 90 \beta g_2 + 540 g_3).
\end{split}
\end{equation*}
If $\alpha \ne 0$ then $\alpha^2 = -4\beta^2 - 60g_2$ and the second equation becomes
$$
\beta^3 + 27 g_2 \beta - 54 g_3 = 0.
$$
It is clear that there are only finite solutions $(B_a, B_b)$'s to this, though it may not be so straightforward to see that these 6 solution pairs (for generic tori) come from the branch loci as proved in Proposition \ref{lame-pair}.
\end{example}

\section{Pre-modular forms $Z_n(\sigma; \tau)$} \label{new-Zn}

We call a real analytic function in $(\sigma, \tau) \in \Bbb C \times \Bbb H$ \emph{pre-modular} if it is (holomorphic and) modular in $\tau$ for $\Gamma(N)$ whenever we fix $\sigma \pmod{\Lambda_\tau} \in E_\tau[N]$. Theorem \ref{main-thm} and Hecke's theorem on $Z$ \cite{Hecke} (cf.~\eqref{e:Hecke}) then imply 

\begin{corollary} \label{c:pre-mod}
$Z_n(\sigma; \tau) := W_n(Z)(\sigma; \tau)$ is pre-modular of weight $\tfrac{1}{2} n (n + 1)$, with $Z$, $\wp(\sigma)$, $\wp'(\sigma)$, $g_2$, $g_3$ being of weight 1, 2, 3, 4, 6 respectively.
\end{corollary}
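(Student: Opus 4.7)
The plan is to derive the corollary from two observations: (i) $W_n(\bz)$ is weighted homogeneous in the grading described in the statement, and (ii) once $\sigma \in E_\tau[N]$ is fixed, each of $Z$, $\wp(\sigma)$, $\wp'(\sigma)$, $g_2$, $g_3$ is a $\Gamma(N)$-modular form of the indicated weight, so Corollary \ref{c:pre-mod} reduces to a bookkeeping of weights.

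First I would verify (i). Under the scaling $\Lambda \mapsto \lambda\Lambda$, $\sigma \mapsto \lambda\sigma$ with $\tau$ held fixed, the Weierstrass data satisfy $g_2 \mapsto \lambda^{-4} g_2$, $g_3 \mapsto \lambda^{-6} g_3$, $\wp \mapsto \lambda^{-2}\wp$, $\wp' \mapsto \lambda^{-3}\wp'$, and $\zeta \mapsto \lambda^{-1}\zeta$. Consequently $\z_n(a) = \zeta(\sum a_i) - \sum\zeta(a_i)$ has weight one, while $\Bbb Q[g_2, g_3, \wp(\sigma), \wp'(\sigma)][\bz]$ inherits precisely the grading stated in the corollary (with $\bz$ in weight $1$). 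By Theorem \ref{main-thm}, $W_n(\bz)$ is the \emph{monic} minimal polynomial of $\z_n$ over $K(E)$; by uniqueness, the construction commutes with the scaling action, and therefore $W_n$ is itself weighted homogeneous. Since the leading term $\bz^{n(n+1)/2}$ has weight $\tfrac{1}{2}n(n+1)$, every monomial of $W_n$ carries that same total weight.

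The Hecke function $Z(\sigma; \tau) = \zeta(\sigma) - r\eta_1 - s\eta_2$ from \eqref{e:Hecke} is manifestly of weight one under the same scaling. Hence the substitution $\bz \mapsto Z(\sigma;\tau)$ preserves the grading and
\[
Z_n(\sigma; \tau) := W_n(Z)(\sigma; \tau)
\]
is weighted homogeneous of total weight $\tfrac{1}{2} n(n+1)$. For the modularity claim in (ii), fix $\sigma = r\omega_1 + s\omega_2$ with $r, s \in \tfrac{1}{N}\Bbb Z$, so that $\sigma \in E_\tau[N]$. Hecke's theorem (\cite{Hecke}, cf.~\eqref{e:Hecke}) says $Z(\sigma;\tau) = Z_{r,s}(\tau)$ is a holomorphic modular form of weight one for $\Gamma(N)$; $g_2(\tau), g_3(\tau)$ are modular of weights $4, 6$ for $\mathrm{SL}(2,\Bbb Z)\supset \Gamma(N)$; and $\wp(\sigma;\tau), \wp'(\sigma;\tau)$ evaluated at the torsion section are holomorphic modular forms of weights $2, 3$ for $\Gamma(N)$. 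Expanding $Z_n(\sigma;\tau)$ monomial-by-monomial and invoking (i) shows that each term is a product of $\Gamma(N)$-modular forms of total weight $\tfrac{1}{2}n(n+1)$; hence so is the sum. Real analyticity in $(\sigma,\tau)\in \Bbb C\times \Bbb H$ follows because each ingredient is real analytic (the only non-meromorphic piece in $Z$ is $-r\eta_1 - s\eta_2$, which is real analytic in $\sigma = r + s\tau$). This is the pre-modularity of $Z_n$ of weight $\tfrac{1}{2}n(n+1)$.

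The only substantive point is step (i): extracting weighted homogeneity of $W_n$ from the uniqueness of the monic minimal polynomial under the scaling action. Once this is in hand, the rest of the proof is a direct assembly of Hecke's theorem with the classical modular-form interpretation of the Weierstrass data at torsion sections.
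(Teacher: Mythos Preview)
Your argument is correct and follows the same route the paper indicates: the corollary is stated immediately after the sentence ``Theorem \ref{main-thm} and Hecke's theorem on $Z$ \cite{Hecke} (cf.~\eqref{e:Hecke}) then imply'', with no further proof given. You have simply unpacked what that sentence means, and in particular supplied the one point the paper leaves implicit---namely that the weighted homogeneity of $W_n$ (asserted parenthetically in Theorem \ref{main-thm}) follows from the scaling behavior of the Weierstrass data together with the uniqueness of the monic minimal polynomial.
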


Now we prove Theorem \ref{t:corr}.

We call the $2n + 1$ branch points $a \in Y_n \setminus X_n$ \emph{trivial critical points} since $a = -a$ and the Green equation \eqref{e:G} holds trivially. They satisfy a nice compatibility condition with the case $n = 1$ under the addition map:

\begin{lemma} \label{trivial-corr}
Let $a = \{a_1, \cdots, a_n\} \in Y_n$ be a solution to the Green equation $\sum_{i = 1}^n \nabla G(a_i) = 0$. Then $a$ is trivial, i.e.~$a = -a$, if and only if $\sigma_n(a) \in E[2]$.
\end{lemma}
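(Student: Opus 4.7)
The plan is to dispatch the easy direction immediately and then, for the converse, produce an explicit Lam\'e branch point in the same fiber and invoke Theorem \ref{key-claim}. For the easy direction: if $a=-a$ as points of $\bar X_n\subset{\rm Sym}^n E$, then $\sigma_n(a)=\sum a_i=-\sum a_i=-\sigma_n(a)$, so $2\sigma_n(a)=0$ in $E$ and $\sigma_n(a)\in E[2]$; this uses only the involution, not the Green equation.

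For the converse, fix $\sigma_0:=\sigma_n(a)\in E[2]$ and assume the Green equation holds. The case $n=1$ is immediate since any 2-torsion point on $\bar X_1=E$ is fixed by the involution, so I assume $n\ge 2$. The key step will be to produce a finite Lam\'e branch point $a^*\in Y_n\setminus X_n$, $a^*\neq 0^n$, with $\sigma_n(a^*)=\sigma_0$, and apply Theorem \ref{key-claim} to the pair $(a,a^*)$. Proposition \ref{4lame} provides such $a^*$: for $\sigma_0=0$ take $a^*$ of type O (for even $n$) or type III (for odd $n\ge 3$), and for $\sigma_0\in E[2]\setminus\{0\}$ take $a^*$ of type II or type I respectively. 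To verify the hypothesis \eqref{match} of Theorem \ref{key-claim}, I choose representatives in $\Bbb C^n$ with $\sum a_i=\sigma_0=\sum a^*_i$ (possible since $2\sigma_0\in\Lambda$) and write $\sigma_0=r\omega_1+s\omega_2$ with $r,s\in\{0,\tfrac{1}{2}\}$. The monodromy calculation behind \eqref{type-II} reduces the Green equation at $a$ to $\sum\zeta(a_i)=r\eta_1+s\eta_2$, and a direct calculation on each explicit $a^*$ (paired $\pm p_j$ components contribute zero, each half-period entry $\omega_j/2$ contributes $\tfrac{1}{2}\eta_j$ via $2\zeta(\omega_j/2)=\eta_j$, while $\eta_1+\eta_2+\eta_3=0$ handles the ambiguity in the representative of $\sigma_0$) yields the same value for $\sum\zeta(a^*_i)$. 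Hence \eqref{match} holds, equivalently $\z_n(a)=\z_n(a^*)$.

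Theorem \ref{key-claim} then leaves two options. If $\sum\wp(a_i)=\sum\wp(a^*_i)$, then $B_a=B_{a^*}$, so $a$ and $a^*$ lie in the same fiber of the hyperelliptic projection $B:\bar X_n\to\Bbb P^1$; since $a^*$ is a branch point and, by Theorem \ref{t:nodal}, $\bar X_n$ has at worst nodal singularities, this fiber is set-theoretically $\{a^*\}$, forcing $a=a^*=-a^*$. Otherwise Theorem \ref{key-claim} forces both $a$ and $a^*$ to be Lam\'e branch points of the same type, so $a$ is a branch point and $a=-a$. The main obstacle I anticipate is the case-by-case bookkeeping for the $\zeta$-sum identity that feeds \eqref{match}: it reduces cleanly to $2\zeta(\omega_j/2)=\eta_j$ and $\eta_1+\eta_2+\eta_3=0$ once representatives are chosen with the correct sign conventions, but requires attention to which half-period $\omega_j/2\pmod{\Lambda}$ actually equals $\sigma_0$.
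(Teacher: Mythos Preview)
Your argument is correct but follows a genuinely different route from the paper's own proof. The paper argues by contradiction: assuming $a\in X_n$ and $\sigma_n(a)\in E[2]$, the monodromy factors in \eqref{type-II} trivialize, so the developing map $f=w_a/w_{-a}$ is \emph{elliptic} on $E$. The paper then analyses $f'$, whose only zero is at $0$ (of order $2n$) and whose only poles are double poles at $-a_1,\ldots,-a_n$; this first forces $\sigma_n(a)=0$, then lets one write $f'=\sum_j E_j\,\wp(z+a_j)+C_1$, integrate, and use ellipticity plus a Vandermonde argument on $f^{(2k-1)}(0)=0$ to obtain $E_j\equiv 0$, a contradiction.

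Your approach instead leverages Theorem \ref{key-claim}, which is already available at this point in the paper: you manufacture a Lam\'e branch point $a^*$ in the same $\sigma_n$-fiber (Proposition \ref{4lame} guarantees one for every half-period), observe that both $a$ and $a^*$ satisfy the Green identity so that with matched lifts $\sum\zeta(a_i)=\sum\zeta(a^*_i)$, and then let Theorem \ref{key-claim} force $a$ to be a branch point (either because $B_a=B_{a^*}$ puts $a$ in the set-theoretic fiber $\{a^*\}$, or because the alternative conclusion of Theorem \ref{key-claim} already says $a$ is a Lam\'e branch point). This is economical and structural, trading the explicit elliptic-function computation for the heavier input of Theorem \ref{key-claim}; the paper's proof, by contrast, is self-contained and does not need \S\ref{primitive}. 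Two small remarks: the invocation of Theorem \ref{t:nodal} in your case $B_a=B_{a^*}$ is not really needed, since the hyperelliptic fiber over a root of $\ell_n$ is already a single point $(B_{a^*},0)$ in $Y_n$; and the half-period relation you quote should be read as the linear relation among $\eta_1,\eta_2,\eta_3$ inherited from that among $\omega_1,\omega_2,\omega_3$ (so $\eta_3=\eta_1+\eta_2$ with the convention $\omega_3=\omega_1+\omega_2$), but this does not affect the argument.
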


\begin{proof}
If $a$ is trivial, then $\sigma_n(a) \in E[2]$ clearly. 
If $a$ is non-trivial, i.e.~$a \in X_n$, by \eqref{type-II}, it gives rise to a type II developing map $f$ with
\begin{equation*}
\begin{split}
f(z + \omega_1) = e^{-4\pi i\sum_i s_i} f(z), \qquad
f(z + \omega_2) = e^{4\pi i \sum_i r_i} f(z).
\end{split}
\end{equation*}
Here $a_i = r_i \omega_1 + s_i \omega_2$ for $i = 1, \ldots, n$.

If $\sigma_n(a) \in E[2]$, then both exponential factors reduce to one and we conclude that $f(z)$ is an elliptic function on $E$. Notice that the only zero of $f'(z)$ is at $z = 0$ which has order $2n$, and the only poles of $f'(z)$ are at $-a_i$ of order 2, $i = 1, \ldots, n$. This forces that $\sigma_n(a) \equiv 0 \pmod\Lambda$ and 
$$
f'(z) = \sum\nolimits_{j = 1}^n E_j \wp(z + a_j) + C_1
$$
for some constants $E_1, \ldots, E_n$ and $C_1$, since $f'$ is residue free. Then
$$
f(z) = -\sum\nolimits_{j = 1}^n E_j \zeta(z + a_i) + C_1 z + C_2
$$
for some constant $C_2$. But $f(z)$ is elliptic, which implies that $C_1 = 0$ and $\sum_{j = 1}^n E_j = 0$. Now $f^{2k - 1}(0) = 0$ for $k = 1, \ldots, n$ leads to a system of linear equations in $E_j$'s (c.f.~{\cite[Lemma 2.5]{CLW}}):
$$
\sum\nolimits_{j = 1}^n \wp^k(a_j) E_j = 0, \qquad k = 1, \ldots, n.
$$
But then $\wp(a_i) \ne \wp(a_j)$ for $i \ne j$ forces that $E_j = 0$ for all $j$. This is a contradiction and so we must have $\sigma_n(a) \not\in E[2]$.
\end{proof}

The following theorem completes the proof of Theorem \ref{t:corr}: 

\begin{theorem}[Extra critical points vs zeros of pre-modular forms] {\ } \label{zero-pre-mod}
\begin{itemize}
\item[(i)]
Given $\sigma_0 \in E_\tau \setminus E_\tau[2]$ with $Z_n(\sigma_0; \tau) = 0$, there is a unique $a \in X_n$ such that $\sigma_n(a) = \sigma_0$ and $\z_n(a) = Z(\sigma_0)$.
\item[(ii)]
Conversely, if $a \in X_n$ and $\z_n(a) = Z(\sigma(a))$, then $Z_n(\sigma(a); \tau) = 0$ and $\sigma_n(a) \not\in E_\tau[2]$.
\end{itemize}
\end{theorem}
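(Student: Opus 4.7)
The strategy is to reduce both parts to two previously established results: Theorem \ref{main-thm}, which identifies $W_n(\bz)$ as the minimal polynomial of the primitive generator $\z_n \in K(\bar X_n)$ over $K(E)$, and Lemma \ref{trivial-corr}, which links the 2-torsion condition $\sigma_n(a) \in E_\tau[2]$ with the triviality ($a = -a$) of critical points of the Green function. The algebraic-analytic bridge is the identity \eqref{e:znZ}, which says that $\z_n(a) = Z(\sigma_n(a))$ is equivalent to $a$ solving the Green equation $\sum \nabla G(a_i) = 0$.

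Part (ii) is nearly immediate. For $a \in X_n$ with $\z_n(a) = Z(\sigma_n(a))$, I would simply compute
\[
Z_n(\sigma_n(a); \tau) = W_n(Z(\sigma_n(a)))(\sigma_n(a); \tau) = W_n(\z_n(a))(\sigma_n(a); \tau),
\]
which vanishes by Theorem \ref{main-thm} since $W_n$ annihilates $\z_n$ as a function on $\bar X_n$. For the 2-torsion claim, the hypothesis is equivalent via \eqref{e:znZ} to $a$ solving the Green equation; since $a \in X_n$ is non-trivial ($a \ne -a$), the contrapositive of Lemma \ref{trivial-corr} yields $\sigma_n(a) \not\in E_\tau[2]$.

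For part (i), the plan is first to produce $a$ and then to prove uniqueness. The hypothesis $Z_n(\sigma_0; \tau) = W_n(Z(\sigma_0))(\sigma_0; \tau) = 0$ says that $Z(\sigma_0)$ is a root of the one-variable polynomial $W_n(\bz)(\sigma_0; \tau)$. By Theorem \ref{main-thm}, this polynomial's roots are exactly $\{\z_n(a) \mid a \in \sigma_n^{-1}(\sigma_0)\}$, so there exists $a \in \bar X_n$ with $\sigma_n(a) = \sigma_0$ and $\z_n(a) = Z(\sigma_0)$. Any point in $\bar X_n \setminus X_n$ either satisfies $a = -a$ (a finite branch point) or equals the infinity point $0^n$; in both cases Theorem \ref{hyper-thm}(4) forces $\sigma_n(a) \in E_\tau[2]$, contradicting $\sigma_0 \not\in E_\tau[2]$. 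Hence $a \in X_n$.

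For uniqueness, suppose $a, b \in X_n$ both satisfy the two conditions. Choosing compatible representatives in $\Bbb C^n$ with $\sum a_i = \sum b_i$ (a common lift of $\sigma_0$), the equality $\z_n(a) = \z_n(b)$ yields $\sum \zeta(a_i) = \sum \zeta(b_i)$. Theorem \ref{key-claim} then gives either $\sum \wp(a_i) = \sum \wp(b_i)$ or $(a,b)$ is a pair of Lam\'e branch points; the latter is excluded by $a, b \in X_n$, so $B_a = B_b$. The fiber of the hyperelliptic projection $B: \bar X_n \to \Bbb P^1$ over this value consists of $a$ and $-a$ by Theorem \ref{hyper-thm}(4), so either $b = a$ or $b = -a$; the latter gives $\sigma_0 = \sigma_n(a) = -\sigma_n(b) = -\sigma_0$, i.e., $\sigma_0 \in E_\tau[2]$, a contradiction. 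The main subtlety is this uniqueness step, which must hold even when $\bar X_n$ is singular; this is why I would rely on Theorem \ref{key-claim} in its stated generality over $Y_n$ rather than on a generic-fiber argument.
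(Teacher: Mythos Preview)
Your proof is correct and follows essentially the same approach as the paper. The paper's proof is slightly more terse in part~(i): it cites Theorem~\ref{key-claim} directly for uniqueness without spelling out the $B_a = B_b \Rightarrow b \in \{a, -a\}$ step, since that deduction was already carried out verbatim inside the proof of Theorem~\ref{main-thm}; your version simply unpacks that reference (and the fiber dichotomy is Theorem~\ref{hyper-thm}(1) rather than (4), but that is a citation nit, not a gap).
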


\begin{proof}
(i) For any given $\sigma_0$, by substituting $\sigma$ by $\sigma_0$ in $W_n(\bz)$, we get a polynomial $W_{n, \sigma_0}(\bz)$ of degree $\tfrac{1}{2} n(n + 1)$. Since $W_n(\bz)$ is the minimal polynomial of the rational function $\z_n \in K(\bar X_n)$ over $K(E)$, those $\z_n(a)$ with $a \in \bar X_n$ and $\sigma_n(a) = \sigma_0$ give precisely all the roots of $W_{n, \sigma_0}(a)$, counted with multiplicities.

Now $Z(\sigma_0)$ is a root of $W_{n, \sigma_0}(\bz)$ with $\sigma_0 \not\in E[2]$, hence there is a point $a \in X_n$ corresponds to it, i.e.~$Z(\sigma_0) = \z_n(a)$ with $\sigma_n(a) = \sigma_0$, which is unique by Theorem \ref{key-claim}. Notice that if $a \in \bar X_n \setminus X_n$ then $a  = -a$ and then $\sigma_n(a) \in E[2]$. So in fact we must have $a \in X_n$.

(ii) It is clear that $Z_n(\sigma(a)) \equiv W_n(Z(\sigma(a)) = W_n(\z_n(a)) = 0$. Since $a \in X_n$, by \eqref{e:znZ} we have $\sum_{i = 1}^n \nabla G(a_i) = 0$. But since $a$ is non-trivial ($a \in X_n$ by assumption), Lemma \ref{trivial-corr} implies that $\sigma_n(a) \not\in E[2]$.  
\end{proof}

We present below an extended version of Theorem \ref{t:corr} in terms of \emph{monodromy groups of Lam\'e equations}. The original case of mean field equations corresponds to the case with \emph{unitary monodromy} (cf.~\cite{CLW}).

Let $a = \{a_1, \cdots, a_n\}\in X_n$, $B_a = (2n - 1) \sum_{i = 1}^n \wp(a_i)$ and $w_a$, $w_{-a}$ be the independent ansatz solutions \eqref{ansatz} to $w'' = (n(n + 1) \wp(z) + B_a) w$. From \eqref{e:sigma-trans}, one calculate easily that the monodromy matrices are given by
\begin{equation} \label{e:monodromy}
\begin{split}
\begin{pmatrix} w_a \\ w_{-a} \end{pmatrix}(z + \omega_1) &= \begin{pmatrix} e^{-2\pi i r} & 0 \\ 0 & e^{2\pi i r} \end{pmatrix} \begin{pmatrix} w_a \\ w_{-a} \end{pmatrix} (z), \\
\begin{pmatrix} w_a \\ w_{-a} \end{pmatrix}(z + \omega_2) &= \begin{pmatrix} e^{2\pi i s} & 0 \\ 0 & e^{-2\pi i s} \end{pmatrix} \begin{pmatrix} w_a \\ w_{-a} \end{pmatrix} (z),\end{split}
\end{equation} 
where the two \emph{complex numbers} $r, s \in \Bbb C$ are uniquely determined by
\begin{equation} \label{e:rs}
r \omega_1 + s \omega_2 = \sigma(a) = \sum_{i = 1}^n a_i, \qquad r \eta_1 + s \eta_2 = \sum_{i = 1}^n \zeta(a_i).
\end{equation}
The system is non-singular by the Legendre relation $\omega_1 \eta_2 - \omega_2 \eta_1 = -2\pi i$ .

The next lemma extends Lemma \ref{trivial-corr}:

\begin{lemma} \label{l:c-corr}
Let $a \in X_n$ with $(r, s)$ given by \eqref{e:rs}. Then $(r, s) \not\in \frac{1}{2} \Bbb Z^2$.
\end{lemma}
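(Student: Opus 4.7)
The plan is to derive a contradiction from $(r,s) \in \tfrac{1}{2}\Bbb Z^2$ by showing it forces both $\sigma_n(a) \in E_\tau[2]$ and the Green function equation $\sum \nabla G(a_i) = 0$, which together contradict the non-triviality of $a \in X_n$ via Lemma \ref{trivial-corr}.

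First, I would invert the $2\times 2$ linear system \eqref{e:rs} using the Legendre relation $\eta_1\omega_2 - \eta_2\omega_1 = 2\pi i$. Writing each $a_i$ in real lattice coordinates as $a_i = r_i\omega_1 + s_i\omega_2$ with $r_i, s_i \in \Bbb R$, and setting $R := \sum r_i$, $S := \sum s_i$, a direct substitution should produce
$$
r = R - \frac{\omega_2}{2\pi i}\sum_{i=1}^n Z(a_i), \qquad s = S + \frac{\omega_1}{2\pi i}\sum_{i=1}^n Z(a_i),
$$
with $Z$ the Hecke function in \eqref{e:Hecke}. Since $R, S \in \Bbb R$ while $\omega_1, \omega_2$ are $\Bbb R$-linearly independent, the key observation is that $(r,s) \in \Bbb R^2$ is equivalent to $\sum_{i=1}^n Z(a_i) = 0$, which by \eqref{e:znZ} (equivalently by \eqref{e:Gz}) is precisely the Green function equation $\sum_{i=1}^n \nabla G(a_i) = 0$.

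With this equivalence in hand, the hypothesis $(r,s) \in \tfrac{1}{2}\Bbb Z^2$ yields two simultaneous conclusions: (i) $(r,s)$ is real, so $a$ satisfies the Green equation; (ii) $\sigma_n(a) = r\omega_1 + s\omega_2 \in \tfrac{1}{2}\Lambda_\tau$, so $\sigma_n(a) \in E_\tau[2]$. But Lemma \ref{trivial-corr}, applied to the non-trivial point $a \in X_n$ satisfying the Green function equation, asserts $\sigma_n(a) \notin E_\tau[2]$. This contradiction finishes the argument.

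I do not foresee a genuine obstacle: the whole proof is a bookkeeping reduction to Lemma \ref{trivial-corr}. The only delicate point is getting the signs right in the Legendre inversion so that the coefficient of $\sum Z(a_i)$ lands in $\Bbb R\omega_1 + \Bbb R\omega_2$ in the appropriate way; once the explicit formula above is recorded, the reality criterion for $(r,s)$ and the identification with the Green equation are immediate, and Lemma \ref{trivial-corr} takes care of the rest.
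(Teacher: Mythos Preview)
Your proposal is correct (up to the sign in the Legendre inversion, which you already flagged and which does not affect the argument), but it takes a genuinely different route from the paper's proof.

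The paper argues directly via monodromy: if $(r,s)\in\tfrac12\Bbb Z^2$ then \eqref{e:monodromy} shows that $f:=w_a/w_{-a}$ is elliptic on $E$; the Wronskian identity $f'=(w_a'w_{-a}-w_aw_{-a}')/w_{-a}^2=C/w_{-a}^2$ then shows that $z=0$ is the only zero of $f'$, of order $2n$, and from there the \emph{argument} of Lemma~\ref{trivial-corr} (not its statement) is repeated verbatim to obtain a contradiction. There is no detour through the Green equation or through the reality of $(r,s)$.

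Your route instead extracts two consequences from $(r,s)\in\tfrac12\Bbb Z^2$: first, reality of $(r,s)$ forces $\sum Z(a_i)=0$, hence the Green equation; second, $\sigma_n(a)\in E[2]$. You then invoke Lemma~\ref{trivial-corr} as a black box. This is arguably cleaner in that it reduces the lemma to an already-proved statement rather than re-running its proof, and it makes explicit the useful equivalence $(r,s)\in\Bbb R^2\Leftrightarrow\sum\nabla G(a_i)=0$. The paper's approach, on the other hand, avoids the linear-algebra inversion entirely and highlights the role of the monodromy and the Wronskian, which is more in keeping with the ODE viewpoint of \S\ref{new-Zn}.
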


\begin{proof}
If $(r, s) \in \frac{1}{2} \Bbb Z^2$ then $f := w_a/w_{-a}$ is elliptic by \eqref{e:monodromy}. Since
$$
f' = \frac{w_a' w_{-a} - w_a w_{-a}'}{w_a^2} = \frac{C}{w_a^2},
$$
we find that $z = 0$ is the only zero of $f'(z)$, which has order $2n$. The proof of Lemma \ref{trivial-corr} for this $f$ goes through and leads to a contradiction. 
\end{proof}

Now we consider $Z_{r, s}(\tau)$ in \eqref{e:Hecke} but with $r, s, \in \Bbb C$, and define
\begin{equation} \label{e:cZ}
Z_{n;\, r, s}(\tau) := W_n(Z_{r, s})(r + s\tau; \tau), \qquad r, s \in \Bbb C.
\end{equation}
It reduces to $Z_n(\sigma; \tau)$ for $\sigma = r + s \tau$ when $r, s \in \Bbb R$ (see \cite{CKLW} for its role in the isomonodromy problems and Painleve VI equations).

By substituting $Z_n(\sigma; \tau)$ with $Z_{n;, r, s}(\tau)$ and using Lemma \ref{l:c-corr} in place of Lemma \ref{trivial-corr}, the proof of Theorem \ref{zero-pre-mod} also leads to:

\begin{theorem} \label{t:lame-mono}
Let $r, s \in \Bbb C$. Then any non-trivial solution $\tau$ to $Z_{n;\, r, s}(\tau) = 0$, i.e.~with $r + s \tau \pmod{\Lambda_\tau} \not \in E_\tau[2]$, corresponds to an $a = (a_1, \ldots, a_n)\in \Bbb C^n$ such that $a \pmod{\Lambda_\tau} \in X_n(\tau)$ and 
$$
\sum_{i = 1}^n a_i = r + s\tau, \qquad \sum_{i = 1}^n \zeta(a_i; \tau) = r \eta_1(\tau) + s \eta_2(\tau).
$$
Equivalently, by \eqref{e:rs}, the Lame equation $w'' = (n(n + 1) \wp(z; \Lambda_\tau) + B_a) w$ has its monodromy representation given by \eqref{e:monodromy}.
\end{theorem}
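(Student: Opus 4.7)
The plan is to carry the argument of Theorem \ref{zero-pre-mod}(i) through verbatim, observing that Theorem \ref{main-thm} identifies $W_n(\bz)$ as the minimal polynomial of $\z_n$ purely at the level of the algebraic correspondence $\sigma_n: \bar X_n \to E$, with no reality assumption on the parameters. Set $\sigma_0 := r + s\tau \pmod{\Lambda_\tau} \in E_\tau$ and $v := Z_{r,s}(\tau) = \zeta(r + s\tau; \tau) - r\eta_1 - s\eta_2$. Since the coefficients of $W_n(\bz)$ lie in $\Bbb Q[g_2, g_3, \wp(\sigma), \wp'(\sigma)]$, the value of $W_n(\bz)(\sigma_0; \tau)$ depends on $\sigma_0$ only through its class in $E_\tau$, and the hypothesis $Z_{n;r,s}(\tau) = W_n(v)(\sigma_0; \tau) = 0$ says precisely that $v$ is a root of the specialized polynomial $W_n(\bz)(\sigma_0; \tau)$.

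By Theorem \ref{main-thm}, the roots of $W_n(\bz)(\sigma_0;\tau)$ are exactly the $\tfrac{1}{2}n(n+1)$ values $\{\z_n(a) : a \in \sigma_n^{-1}(\sigma_0)\}$ (counted with multiplicity), so there exists $a \in \bar X_n(\tau)$ with $\sigma_n(a) = \sigma_0$ and $\z_n(a) = v$. To rule out branch points, observe that by Theorem \ref{hyper-thm}(4) every $a \in \bar X_n \setminus X_n$ satisfies $a = -a$ and hence $\sigma_n(a) \in E_\tau[2]$; since the non-triviality hypothesis is precisely $\sigma_0 \notin E_\tau[2]$, we conclude $a \in X_n(\tau)$.

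It remains to upgrade $a \in X_n(\tau) \subset {\rm Sym}^n E_\tau$ to a tuple $(a_1, \ldots, a_n) \in \Bbb C^n$ realizing the two stated identities on the nose. Pick any lift $(\tilde a_1, \ldots, \tilde a_n)$; then $\sum \tilde a_i = r + s\tau + m$ for some $m = m_1 \omega_1 + m_2 \omega_2 \in \Lambda_\tau$, and replacing $\tilde a_1$ by $\tilde a_1 - m$ does not alter the image in ${\rm Sym}^n E_\tau$ while forcing $\sum a_i = r + s\tau$ exactly. The identity $\z_n(a) = \zeta(\sum a_i) - \sum \zeta(a_i) = v$ then rearranges, via $v = \zeta(r + s\tau) - r\eta_1 - s\eta_2$, to $\sum \zeta(a_i) = r\eta_1 + s\eta_2$. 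The equivalence with the monodromy prescription \eqref{e:monodromy} is read off from the computation via \eqref{e:sigma-trans} and \eqref{e:rs} given in the paragraph preceding the theorem.

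There is no serious obstacle; the argument is a straightforward transcription of Theorem \ref{zero-pre-mod}(i), with the specific complex representative $r + s\tau$ replacing an equivalence class $\sigma_0 \in E_\tau$. The one subtlety worth flagging is the compatibility between the lift adjustment and the quasi-periodicity $\zeta(z + \omega_j) = \zeta(z) + \eta_j$: this is exactly what ensures that the shifts in $\zeta(\sum a_i)$ and in $\sum \zeta(a_i)$ cancel, so that $\z_n(a)$ is insensitive to the choice of lift while the normalization $\sum a_i = r + s\tau$ automatically pins $\sum \zeta(a_i)$ to its asserted value $r\eta_1 + s\eta_2$. Lemma \ref{l:c-corr} does not enter in this forward direction, but would be needed for the converse statement analogous to Theorem \ref{zero-pre-mod}(ii), where one starts from $a \in X_n(\tau)$ and must exclude the possibility that the associated $(r,s)$ falls into $\tfrac{1}{2}\Bbb Z^2$.
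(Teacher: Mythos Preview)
Your argument is correct and follows exactly the route the paper indicates: it transcribes the proof of Theorem~\ref{zero-pre-mod}(i) with the complex parameter $v = Z_{r,s}(\tau)$ in place of the real-valued $Z(\sigma_0)$, and your handling of the lift to $\Bbb C^n$ via the quasi-periodicity of $\zeta$ fills in precisely the detail the paper leaves to the reader. Your closing remark is also on point: the paper's hint to invoke Lemma~\ref{l:c-corr} in place of Lemma~\ref{trivial-corr} pertains to the converse direction (analogous to Theorem~\ref{zero-pre-mod}(ii)), which the stated theorem does not actually assert, so for the forward implication as written that lemma is indeed not needed.
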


We leave the straightforward justifications to the interested reader.

\section{An explicit determination of $Z_n$} \label{resultant}
\setcounter{equation}{0}

From the equations of $\bar X_n \subset {\rm Sym}^n E$ (cf.~\eqref{e:P}) and the recursively defined algebraic formula of the addition map $E^n \to E$, in principle it is possible to compute $W_n$ and hence $Z_n$ by \emph{elimination theory} (cf.~\cite{Hassett}). However we shall present a more direct approach on this to reveal more structures inside it.

Besides the Hermite--Halphen ansatz \eqref{ansatz}, there is another ansatz, the \emph{Hermite--Krichever ansatz}, which can also be used to construct solutions to the integral Lam\'e equation \eqref{lame}. It takes the form
\begin{equation} \label{HK}
\psi(z) := \Big(U(\wp(z)) + V(\wp(z)) \frac{\wp'(z) + \wp'(a_0)}{\wp(z) - \wp(a_0)}\Big) \frac{\sigma(z - a_0)}{\sigma(z)} e^{(\zeta(a_0) + \kappa)z},
\end{equation}
where $U(x)$ and $V(x)$ are polynomials in $x$, $a_0 \in E^\times$, and $\kappa \in \Bbb C$ is a constant. As usual, we set $(x, y) = (\wp(z), \wp'(z))$ and $(x_0, y_0) = (\wp(a_0), \wp'(a_0))$ to be the corresponding algebraic coordinates.

Notice that (\ref{HK}) makes sense since $\psi$ only has poles at $z = 0$ (the one at $z = a_0$ from $(\wp(z) - \wp(a_0))^{-1}$ cancels with the zero from $\sigma(z - a_0)$). Moreover, in order for ${\rm ord}_{z = 0}\, \psi(z) = -n$, we must have 

\begin{lemma} [Degree constraints] {\ }
\begin{itemize}
\item[(i)] If $n = 2m$ with $m \in \Bbb N$ then $\deg U \le m - 1$ and $\deg V = m - 1$. 

\item[(ii)] If $n = 2m + 1$ with $m \in \Bbb N \cup \{0\}$ then $\deg U = m$ and $\deg V \le m - 1$.
\end{itemize}
\end{lemma}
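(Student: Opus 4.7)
The plan is to carry out a careful order-of-vanishing analysis at $z=0$ of each building block of the ansatz, and then observe that the two terms in the parenthesis $(U + V \cdot (\wp'+\wp'(a_0))/(\wp - \wp(a_0)))$ contribute to $\psi(z)$ with leading orders of opposite parity, so they cannot cancel and the condition ${\rm ord}_{z=0}\psi = -n$ forces the degree constraints directly.

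First I would record the basic Laurent expansions at $z=0$: $\sigma(z) = z + O(z^5)$, $\wp(z) = z^{-2} + O(z^2)$, $\wp'(z) = -2z^{-3} + O(z)$, together with $\sigma(z - a_0)/\sigma(z) = -\sigma(a_0)z^{-1} + O(1)$ and the exponential being holomorphic and nonvanishing at $0$. From these, the rational kernel is
\begin{equation*}
\frac{\wp'(z) + \wp'(a_0)}{\wp(z) - \wp(a_0)} = -\frac{2}{z} + O(1),
\end{equation*}
so $U(\wp(z))$ has order $-2\deg U$ at $z=0$ while $V(\wp(z))\cdot(\wp'+\wp'(a_0))/(\wp - \wp(a_0))$ has order $-2\deg V - 1$.

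Multiplying by $\sigma(z-a_0)/\sigma(z)$ (which shifts orders by $-1$) and by the holomorphic exponential, the two summands contribute to $\psi$ with leading orders
\begin{equation*}
\text{($U$-piece)} : \ -(2\deg U + 1) \quad (\text{odd}), \qquad \text{($V$-piece)} : \ -(2\deg V + 2) \quad (\text{even}).
\end{equation*}
Since an odd integer can never equal an even integer, the leading terms of the two pieces cannot cancel each other, and the total order ${\rm ord}_{z=0}\psi$ is simply the minimum of the two. Thus the requirement ${\rm ord}_{z=0}\psi = -n$ splits according to the parity of $n$.

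When $n = 2m$ is even, the leading order must come from the $V$-piece, forcing $2\deg V + 2 = 2m$, i.e.\ $\deg V = m - 1$, while the $U$-piece must not dominate: $2\deg U + 1 \le 2m$, i.e.\ $\deg U \le m-1$. When $n = 2m+1$ is odd, the leading order must come from the $U$-piece, forcing $2\deg U + 1 = 2m+1$, i.e.\ $\deg U = m$, while $2\deg V + 2 \le 2m+1$ forces $\deg V \le m - 1$. I do not foresee a genuine obstacle here; the only subtlety is to be careful that the factor $\sigma(z-a_0)/\sigma(z)$ is truly of order $-1$ (its residue $-\sigma(a_0)$ is nonzero since $a_0 \in E^\times$) and that the leading coefficients of the two pieces are genuinely nonzero when $\deg U$ and $\deg V$ attain their stated maxima, so that no accidental degeneration lowers the apparent order.
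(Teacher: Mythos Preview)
Your argument is correct and is precisely the order-counting at $z=0$ that the paper has in mind; in fact the paper states this lemma without proof, merely prefacing it with ``in order for ${\rm ord}_{z=0}\,\psi(z) = -n$, we must have'' and leaving the parity-based pole analysis you wrote out as an implicit exercise. Your explicit verification that the $U$-piece contributes odd pole order $-(2\deg U+1)$ and the $V$-piece even pole order $-(2\deg V+2)$, together with the non-cancellation observation, is exactly the intended reasoning.
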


By an obvious normalization, in case (i) we may assume that $U(x) = \sum_{i = 0}^{m - 1} u_i x^i$, $V(x) = \sum_{i = 0}^{m - 1} v_i x^i$ with $v_{m - 1} = 1$, and in case (ii) $U(x) = \sum_{i = 0}^{m} u_i x^i$ with $u_m = 1$ and $V(x) = \sum_{i = 0}^{m - 1} v_i x^i$. In both cases, the requirement that $\psi(z)$ satisfies \eqref{lame} leads to recursive relations on $u_i$'s and $v_i$'s. In doing so, it is more convenient to work on the algebraic coordinates. This had been carried out by Maier in \cite[\S 4]{Maier}. The following is a summary:

In case (i) the recursion determines $v_i$ ($v_{m - 1} = 1$) and then $u_i$ for $i = m - 1, m - 2, \cdots$  in decreasing order. In case (ii) it starts with $u_m = 1$ and determines $v_i$ and then $u_i$ for $i = m - 1, m -2, \cdots$. There are two compatibility equations coming from $u_{-1}(B, \kappa, x_0, y_0) = 0$ and $v_{-1}(B, \kappa, x_0, y_0) = 0$. The two parameters $x_0, y_0$ satisfy $y_0^2 = 4 x_0^3 - g_2 x_0 - g_3$. Hence there are four variables $(B, \kappa, x_0, y_0) \in \Bbb C^4$ which are subject to three polynomial equations. By taking in to account the limiting cases with $(x_0, y_0) = (\infty, \infty)$, this recovers the Lame curve $\bar Y_n$, which was denoted by $\Gamma_\ell$ in \cite{Maier} with $\ell = n$.

There are four natural coordinate projections (rational functions) $\bar Y_n \to \Bbb P^1$, namely $B, \kappa, x_0$ and $y_0$ respectively. The first one $B: \bar Y_n \to \Bbb P^1$ is simply the hyperelliptic structure map. The main result in \cite{Maier} is an explicit description of the other 3 maps in terms of the coordinates $(B, C)$ on $\bar Y_n$: 

\begin{theorem} [{\cite[Theorem 4.1]{Maier}}] \label{t:xyk}
For all $n \in \Bbb N$ and $i \in \{1, 2, 3\}$,
\begin{equation} \label{xyk}
\begin{split}
x_0(B) &= e_i + \frac{4}{n^2(n + 1)^2} \frac{l_i(B) lt_i(B)^2}{l_0(B) lt_0(B)^2}, \\
y_0(B, C) &= \frac{16}{n^3(n + 1)^3} \frac{C}{c_n} \frac{lt_1(B) lt_2(B) lt_3(B)}{l_0(B)^2 lt_0(B)^3}, \\
\kappa(B, C) &= -\frac{(n - 1)(n + 2)}{n(n + 1)} \frac{C}{c_n} \frac{l_\theta(B)}{l_0(B) lt_0(B)}.
\end{split}
\end{equation}
The formula for $x_0(B)$ is independent of the choices of $i$.

All the factors lie in $\Bbb Q[e_1, e_2, e_3, g_2, g_3, B]$ and are \emph{monic} in $B$. They are homogeneous with weights of $B, e_i, g_2, g_3$ being $1, 1, 2, 3$ respectively.  \end{theorem}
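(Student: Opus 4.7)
The plan is to insert the Hermite--Krichever ansatz \eqref{HK} into the Lam\'e equation \eqref{lame} and convert the resulting identity to an identity among rational functions in the algebraic coordinates $x = \wp(z)$, $y = \wp'(z)$ subject to $y^2 = p(x) := 4x^3 - g_2 x - g_3$. The key preliminary is the addition formula
$$
\zeta(z - a_0) - \zeta(z) + \zeta(a_0) = \tfrac{1}{2}\, \frac{\wp'(z) + \wp'(a_0)}{\wp(z) - \wp(a_0)} = \tfrac{1}{2}\,\frac{y + y_0}{x - x_0},
$$
which gives $h'/h = L + \kappa$ for $h(z) := (\sigma(z - a_0)/\sigma(z))\, e^{(\zeta(a_0) + \kappa) z}$ and $L := \tfrac{1}{2}(y + y_0)/(x - x_0)$. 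Writing $\psi = \Phi h$ with $\Phi = U(x) + 2 V(x) L$, the Lam\'e equation $\psi'' = (n(n+1)x + B)\psi$ becomes
$$
\Phi'' + 2(L + \kappa) \Phi' + \bigl( L' + (L + \kappa)^2 - n(n+1) x - B \bigr) \Phi = 0,
$$
which separates into a $y$-free part and a part linear in $y$ and thereby yields two coupled linear ODEs for the polynomials $U(x), V(x)$.

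Starting from the top-degree normalization ($v_{m-1} = 1$ if $n = 2m$, or $u_m = 1$ if $n = 2m+1$), solve the coupled system recursively from the highest-degree coefficient downward: each step expresses a new $u_i$ or $v_i$ as a polynomial in $B, \kappa, x_0, y_0$, reduced modulo $y_0^2 = p(x_0)$. The recursion terminates with two compatibility equations $u_{-1}(B, \kappa, x_0, y_0) = 0$ and $v_{-1}(B, \kappa, x_0, y_0) = 0$; together with $y_0^2 = p(x_0)$ they cut out an affine curve in $(B, \kappa, x_0, y_0)$-space which is birational to the affine Lam\'e curve $Y_n$, since at every smooth $(B, C) \in Y_n$ the Floquet-type solution \eqref{HK} is determined uniquely up to the involution $a_0 \leftrightarrow -a_0$ by the spectral data. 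Thus $(B, C) \mapsto (B, \kappa, x_0, y_0)$ is well defined modulo the hyperelliptic involution $\iota : (B, C) \mapsto (B, -C)$.

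To extract the explicit rational-function formulas, exploit the $\iota$-symmetry: since $\iota$ corresponds geometrically to $a_0 \mapsto -a_0$, the function $x_0$ is $\iota$-invariant (hence depends only on $B$), while $y_0$ and $\kappa$ are $\iota$-odd (each equals $C$ times a rational function of $B$). The zero--pole structure is then pinned down via Proposition \ref{4lame}: at a branch point with $C = 0$ whose Lam\'e function is of type I or II containing $\tfrac{1}{2}\omega_i$ (equivalently, a zero of $l_i(B)$), the Floquet zero of $\psi$ must be at that half-period, forcing $a_0 = \tfrac{1}{2}\omega_i$ and hence $x_0 = e_i$. Conversely, at a zero of $l_0(B)$ (type O or III, where no half-period can serve as $a_0$) the recursion forces $x_0 \to \infty$, so $l_0(B)$ enters the denominator. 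Combined with degree counting against $\deg \sigma_n = \tfrac{1}{2} n(n+1)$ (Theorem \ref{degree}) and the identification $\pi_n = \sigma_n$, these constraints determine the factorizations in \eqref{xyk} up to the leading constants and the auxiliary ``twin'' polynomials $lt_i(B)$ and the theta polynomial $l_\theta(B)$.

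The main obstacle is the precise identification of $lt_i(B)$ and $l_\theta(B)$ and the pinning down of the numerical constants $4/n^2(n+1)^2$, $16/n^3(n+1)^3$, and $-(n-1)(n+2)/n(n+1)$. The twin polynomials arise from the discrepancy between the na\"ive zero count provided by Proposition \ref{4lame} and the actual numerator/denominator degrees of $x_0, y_0, \kappa$; I would extract them by an explicit analysis of the top few iterations of the recursion, which produces monic polynomials in $B$ with coefficients in $\Bbb Q[g_2, g_3]$ of the predicted homogeneous weights. The normalizing constants can be fixed by matching leading asymptotics at the point at infinity $0^n \in \bar X_n$, where the tangent-direction data from Theorem \ref{hyper-thm}(5) together with the Laurent expansion of $\wp$ near $0$ provide closed-form limits of $x_0, y_0, \kappa$. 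As a final consistency check, substitute the conjectured $x_0(B)$ into $y_0^2 = 4\prod_{i = 1}^3 (x_0 - e_i)$ and use $C^2 = c_n^2\, l_0 l_1 l_2 l_3$ to recover the formula for $y_0$; the formula for $\kappa$ then follows by substituting the known $x_0, y_0$ back into either $u_{-1} = 0$ or $v_{-1} = 0$ and solving linearly for $\kappa$.
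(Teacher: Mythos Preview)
The paper does not give its own proof of this statement: Theorem~\ref{t:xyk} is quoted verbatim as \cite[Theorem 4.1]{Maier}, and the surrounding text only \emph{summarizes} Maier's setup (the Hermite--Krichever ansatz \eqref{HK}, the recursion for $u_i, v_i$, and the two compatibility equations $u_{-1} = v_{-1} = 0$). Your proposal is in fact a reasonable outline of how Maier's argument proceeds, and the first two paragraphs of your sketch match that summary closely.

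There is, however, a genuine circularity in your third paragraph. You invoke ``the identification $\pi_n = \sigma_n$'' and $\deg\sigma_n = \tfrac{1}{2}n(n+1)$ to pin down the factorizations in \eqref{xyk}. In the paper's logical order, the equality $\pi_n = \sigma_n$ is Theorem~\ref{pi=sigma}, which is proved \emph{after} Theorem~\ref{t:xyk} and whose proof explicitly uses the formula for $\kappa(B,C)$ from \eqref{xyk} (to conclude $\kappa(a) = 0$ at a branch point with $l_1(B_a) = 0$). So you cannot use $\pi_n = \sigma_n$ as input here. Maier's actual argument is self-contained on the Hermite--Krichever side: the degrees of the numerator and denominator polynomials in \eqref{xyk}, the appearance of the twisted polynomials $lt_j(B)$ and $l_\theta(B)$, and the leading constants are all read off directly from the recursion and the classification of degenerate ansatz solutions (those with $a_0 \in E[2]$ or $\kappa = 0$), without any reference to the addition map $\sigma_n$. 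Your fallback in the last paragraph (extracting $lt_i$, $l_\theta$ from the top of the recursion and fixing constants by asymptotics at $0^n$, then cross-checking via $y_0^2 = 4\prod(x_0 - e_i)$) is the right self-contained route; but you should drop the appeal to $\pi_n = \sigma_n$ entirely, since in this paper that identification is a \emph{consequence} of Theorem~\ref{t:xyk}, not an ingredient.
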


As a simple consistency check, we have $C^2 = \ell_n(B)$ by Proposition \ref{4lame}. 

In \eqref{xyk}, $lt_j(B)$, $j = 0, 1, 2, 3$, are the \emph{twisted Lam\'e polynomials} whose zeros correspond to solutions to \eqref{lame} given by the Hermite--Krichever ansatz with $\kappa \ne 0$ and $a_0 = 0$, $\tfrac{1}{2} \omega_1$, $\tfrac{1}{2} \omega_2$, $\tfrac{1}{2} \omega_3$ respectively, i.e.~$(x_0, y_0) = (\infty, \infty)$, $(e_1, 0)$, $(e_2, 0)$, $(e_3, 0)$ respectively. 

The polynomial $l_\theta(B)$ is the \emph{theta-twisted polynomial} whose roots correspond to the case $\kappa = 0$ and $a_0 \not \in E[2]$. (For $\kappa = 0$ and $a_0 \in E[2]$ they correspond to the \emph{ordinary} Lam\'e polynomials $l_i(B)$'s.) 

\begin{remark}
In \cite{Maier} $\nu = C/c_n$ is used instead. Also $l_0(B)$, $l_i(B)$, $lt_0(B)$, $lt_i(B)$, and $l_\theta(B)$ ($i = 1, 2, 3$) are written there as $L^I_\ell(B; g_2, g_3)$, $L^{II}_\ell(B; e_i, g_2, g_3)$, $Lt^I_\ell(B; g_2, g_3)$, $Lt^{II}_\ell(B; e_i, g_2, g_3)$, and $L\theta_\ell(B; g_2, g_3)$ respectively, where $\ell = n$.  
\end{remark}

The compatibility equations from the recursive formulas for these special cases give rise to explicit formulas for $lt_j(B)$'s and $l_\theta(B)$'s. Tables for $lt_0(B)$, $l_\theta(B)$ up to $n = 8$, and for $lt_i(B)$ up to $n = 6$, are given in \cite[Table 5, 6]{Maier}. 

\begin{example} \label{lt-poly}
We recall Maier's formulas for $lt_j(B)$ and $l_\theta(B)$ for $n \le 4$.

(1) First of all, $l_\theta(B) = 1$ for $n \le 3$. For $n = 4$, 
$$
l_\theta(B) = B^2 - \tfrac{193}{3} g_2.
$$ 
Also for $n = 1$, $lt_j(B) = 1$ for all $j$.

(2) $n = 2$: $lt_0(B) = 1$, $lt_i(B) = B - 6e_i$ for $i = 1, 2, 3$.

(3) $n = 3$: $lt_0(B) = B^2 - \tfrac{75}{4} g_2$, and for $i  = 1, 2, 3$,
$$
lt_i(B) = B^2 - 15 e_i B + \tfrac{75}{4} g_2 - 225 e_i^2.
$$

(4) $n = 4$: $lt_0(B) = B^3 - \tfrac{343}{4} g_2 B - \tfrac{1715}{2} g_3$. For $i = 1, 2, 3$, 
\begin{equation*}
\begin{split}
lt_i(B) &= B^4 - 55 e_i B^3 + (\tfrac{539}{4} g_2 - 945 e_i^2) B^2 \\
&\qquad + (1960 e_i g_2 + 2450 g_3) B + 61740 e_i^2 g_2 - 68600 e_i g_3 - 9261 g_2^2.
\end{split}
\end{equation*} 
\end{example}

To apply Theorem \ref{xyk}, we need to compare the projection map 
\begin{equation} \label{e:pi}
\pi_n: \bar Y_n \to E, \qquad a \mapsto \pi_n(a) := a_0.
\end{equation}
with the addition map $\sigma_n: \bar Y_n \to E$. They turn out to be the same!

\begin{theorem} \label{pi=sigma}
$\pi_n(a) = \sigma_n(a)$. Moreover, $\kappa(a) = -\z_n(a)$.
\end{theorem}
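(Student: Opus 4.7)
The plan is to identify the Hermite--Krichever solution $\psi$ with the Hermite--Halphen solution $w_a$ up to a scalar, by viewing both as simultaneous eigenfunctions of the shift operators $T_j\colon z \mapsto z + \omega_j$ on the two-dimensional solution space of $w'' = (n(n+1)\wp + B_a) w$, and then matching the eigenvalues to read off both $a_0$ and $\kappa$.

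Because the rational factor $U(\wp) + V(\wp)(\wp' + \wp'(a_0))/(\wp - \wp(a_0))$ in \eqref{HK} is doubly periodic, the quasi-periodicity of $\psi$ comes entirely from $\sigma(z-a_0)/\sigma(z)$ and the exponential. A direct computation using $\sigma(z + \omega_j) = -e^{\eta_j(z + \omega_j/2)}\sigma(z)$ gives
\begin{equation*}
\psi(z + \omega_j) = \exp\!\bigl(-\eta_j a_0 + (\zeta(a_0) + \kappa)\omega_j\bigr)\,\psi(z),\qquad j = 1, 2.
\end{equation*}
The analogous computation from \eqref{ansatz}, with $\sigma := \sigma_n(a)$ and $\sum_i \zeta(a_i) = \zeta(\sigma) - \z_n(a)$, yields
\begin{equation*}
w_a(z + \omega_j) = \exp\!\bigl(\omega_j(\zeta(\sigma) - \z_n(a)) - \eta_j\sigma\bigr)\,w_a(z).
\end{equation*}
For $a \in X_n$, Lemma \ref{l:c-corr} guarantees $(r,s) \notin \tfrac12 \Bbb Z^2$, so by \eqref{e:monodromy} the simultaneous eigenvalue pairs of $T_1, T_2$ on $w_a, w_{-a}$ are distinct, and their joint eigenspaces are therefore one-dimensional. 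Hence $\psi \propto w_a$, and matching eigenvalues for $j = 1, 2$ gives
\begin{equation*}
\omega_j\bigl[\zeta(a_0) - \zeta(\sigma) + \kappa + \z_n(a)\bigr] \equiv \eta_j[a_0 - \sigma] \pmod{2\pi i\,\Bbb Z}.
\end{equation*}
Legendre's relation $\eta_1\omega_2 - \eta_2\omega_1 = 2\pi i$ allows me to invert this linear system, forcing $a_0 \equiv \sigma \pmod{\Lambda_\tau}$, i.e.\ $\pi_n(a) = \sigma_n(a)$ in $E$.

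Once $a_0 = \sigma$ in $E$, the proportionality $\psi = c\cdot w_a$ is an explicit identity of products of $\sigma$'s, $\wp$'s and exponentials. The non-exponential parts combine into a doubly periodic factor, so the pure exponential quotient $e^{((\zeta(a_0) + \kappa) - \sum \zeta(a_i))z}$ must be constant, giving
\begin{equation*}
\kappa = \sum_{i=1}^n \zeta(a_i) - \zeta(\sigma) = -\z_n(a).
\end{equation*}
Both identities then extend from the Zariski-dense subset $X_n$ to all of $\bar X_n = \bar Y_n$ by continuity of morphisms of irreducible algebraic curves. The main obstacle is the eigenspace-uniqueness step, which is cleanly handled by Lemma \ref{l:c-corr}; a secondary issue is that the multiplier-matching argument determines $a_0$ only modulo $\Lambda_\tau$ and $\kappa$ only modulo the lattice $\Bbb Z\eta_1 + \Bbb Z\eta_2$, so one must verify that Maier's specific normalization in \eqref{xyk} agrees with the canonical lifts $a_0 = \sigma$ and $\kappa = -\z_n(a)$. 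By the derivation above these canonical lifts do satisfy the congruences exactly, so what is needed is only a compatibility check against \eqref{xyk}.
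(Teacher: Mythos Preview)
Your approach via monodromy eigenvalue matching is genuinely different from the paper's and has an appealing directness: once you know $\psi$ and $w_a$ lie in the same one-dimensional joint eigenspace for $T_1,T_2$, the Legendre relation immediately forces $a_0\equiv\sigma\pmod{\Lambda}$, with no need for the paper's intermediate step ``$a_0(a)=\sigma_n(a)+c$ for some constant $c$'' followed by an evaluation to determine $c$.

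However, there is a real gap in your deduction ``Hence $\psi\propto w_a$''. The one-dimensionality of the joint eigenspaces (via Lemma~\ref{l:c-corr}) only tells you that $\psi$ is proportional to \emph{one} of $w_a$ or $w_{-a}$; your eigenspace argument does not select which. If in fact $\psi\propto w_{-a}$, your matching yields $\pi_n=-\sigma_n$ and $\kappa=+\z_n$ instead. The paper sidesteps this by simply asserting that the two ansatz forms represent ``the same solution'' (a fact built into the Hermite--Krichever construction), and then pins down its undetermined constant $c$ by evaluating at a branch point $a\in Y_n\setminus X_n$ with $\sigma_n(a)=\tfrac12\omega_1$: there $C_a=0$ forces $\kappa(a)=0$ from \eqref{xyk}, and $a=-a$ forces $\z_n(a)=0$, yielding $\zeta(\tfrac12\omega_1)=\zeta(\tfrac12\omega_1+c)$ and hence $c=0$. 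Your argument needs an analogous single evaluation (or a connectedness-plus-one-point check) to resolve the $\pm$ ambiguity; your final paragraph gestures at a ``compatibility check against \eqref{xyk}'' but misidentifies the issue as one of lattice lifts rather than this sign.

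A minor point: your claim that ``the non-exponential parts combine into a doubly periodic factor'' is only correct after you have chosen the $\Bbb C$-lift $a_0=\sigma$ exactly (not merely $a_0\equiv\sigma\bmod\Lambda$); otherwise the $\sigma$-quotient picks up a factor $e^{\eta_j(\sigma-a_0)}\ne 1$. With that lift made explicit, your derivation of $\kappa=-\z_n(a)$ is fine.
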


\begin{proof}
During the proof we view $a_i \in \Bbb C$ instead of its image $[a_i] \in E$.

Let $a \in Y_n$. The two expressions (\ref{ansatz}) and (\ref{HK}), which correspond to the same solution to the Lam\'e equation (\ref{lame}), must be proportional to each other by a constant. Hence we get
$$
\kappa(a) = \sum_{i = 1}^n \zeta(a_i) - \zeta(a_0).
$$
Recall that $\z_n(a) = \zeta(\sigma_n(a)) - \sum_{i = 1}^n \zeta(a_i)$. Then
\begin{equation} \label{z+k}
\z_n(a) + \kappa(a) = \zeta(\sigma_n(a)) - \zeta(a_0).
\end{equation}
As a well defined meromorphic function on $\bar Y_n$, we conclude that
$$
a_0(a) = \sigma_n(a) + c
$$
for some constant $c \in \Bbb C$. Consider a point $a \in Y_n \setminus X_n$ with $\sigma_n(a) = \tfrac{1}{2} \omega_1$, i.e.~$l_1(B_a) = 0$. Such $a$ exists by Proposition \ref{4lame}. Then $\z_n(a) = 0$ trivially. We also have $\kappa(a) = 0$ by Theorem \ref{xyk} since 
$$
C_a^2 = c_n^2 l_0(B_a) l_1(B_a) l_2(B_a) l_3(B_a) = 0
$$ 
(again by Proposition \ref{4lame}). So (\ref{z+k}) implies $0 = \tfrac{1}{2}\eta_1 - \zeta(\tfrac{1}{2} \omega_1 + c)$, and hence $c = 0$. This proves $\sigma_n(a) = a_0$, which represents $\pi_n(a)$ in $E$, and also $\kappa(a) = -\z_n(a)$. The proof is complete.
\end{proof}

Now we may describe the explicit construction of the polynomial $W_n(\bz)$ in Theorem \ref{main-thm} based on Theorem \ref{t:xyk}. It is indeed merely an application of the elimination theory using resultant.

By Theorem \ref{t:xyk} and \ref{pi=sigma}, we may eliminate $C$ to get
\begin{equation} \label{y/z}
\frac{y_0}{\z_n} = \frac{16}{n^2(n + 1)^2(n - 1)(n + 2)} \frac{lt_1(B) lt_2(B) lt_3(B)}{l_0(B) lt_0(B)^2 l_\theta(B)},
\end{equation}
which leads to a polynomial equation $g = 0$ for
\begin{equation} \label{g=0}
g := \bz \prod_{i = 1}^3 lt_i(B) - y_0 \frac{n^2(n + 1)^2(n - 1)(n + 2)}{16} l_0(B) lt_0(B)^2 l_\theta(B). 
\end{equation}
On the other hand, the three rational expressions of $x_0$ lead to $f = 0$ for
\begin{equation} \label{f=0}
\begin{split}
f &:= l_i(B) lt_i(B)^2 - (x_0 - e_i) \frac{n^2(n + 1)^2}{4} l_0(B) lt_0(B)^2 \\
&= \frac{1}{3}\sum_{i = 1}^3 l_i(B) lt_i(B)^2 - x_0 \frac{n^2(n + 1)^2}{4} l_0(B) lt_0(B)^2.
\end{split}
\end{equation}
Notice that $f, g$ are polynomials in $g_2, g_3$ (and $B, x_0, y_0$) instead of $e_i$'s.

Let $R(f, g; B)$ be the \emph{resultant of the two polynomials} $f$ and $g$ arising from the elimination of the variable $B$. Standard elimination theory (see e.g~\cite[Chapter 5]{Hassett}) implies that $R(f, g; B)$ gives \emph{the equation} defining the branched covering map $\sigma_n: \bar Y_n \to E$ outside the loci $C = 0$:

\begin{proposition} \label{p:resultant}
$R(f, g; B)(\bz) = \lambda_n W_n(\bz) \in \Bbb Q[g_2, g_3, x_0, y_0][\bz]$, where $\lambda_n = \lambda_n(g_2, g_3, x_0, y_0)$ is independent of $\bz$. 
\end{proposition}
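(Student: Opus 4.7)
My plan is to match the vanishing loci of $R(f,g;B)$ and $W_n(\bz)$, and then to bound $\deg_\bz R$ from above by $\deg_\bz W_n = \tfrac{1}{2} n(n+1) =: d$, from which the identity $R = \lambda_n W_n$ with $\lambda_n \in \Bbb Q[g_2,g_3,x_0,y_0]$ independent of $\bz$ will follow. For a fixed tuple $(g_2,g_3,x_0,y_0,\bz)$ satisfying the Weierstrass relation, I first observe that $f(B)=0=g(B)$ admits a common $B$-root if and only if there is a point $(B,C) \in \bar Y_n$ (with $C$ recovered from $y_0$ and $B$ via the $y_0$-formula in \eqref{xyk}) for which $\sigma_n(B,C)$ is the point of $E$ with coordinates $(x_0,y_0)$ and $\bz_n(B,C) = \bz$. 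This is the content of Theorem~\ref{t:xyk} combined with Theorem~\ref{pi=sigma}. In particular, specializing $(x_0,y_0,\bz) = (\wp(\sigma_n(a)), \wp'(\sigma_n(a)), \bz_n(a))$ for any $a \in \bar Y_n$ makes $R$ vanish, i.e.\ $R(\bz_n) = 0$ in $K(\bar X_n)$; since $W_n$ is the minimal polynomial of the primitive generator $\bz_n$ over $K(E)$ (Theorem~\ref{main-thm}), this forces $W_n \mid R$ in $K(E)[\bz]$.

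Both $R$ and $W_n$ have coefficients in $\Bbb Q[g_2,g_3,x_0,y_0]$ (for $R$ by standard elimination theory applied to $f, g \in \Bbb Q[g_2,g_3,x_0,y_0,\bz][B]$, for $W_n$ by the footnote to Theorem~\ref{main-thm}), so after normalizing $W_n$ to be primitive in $\bz$, Gauss's lemma upgrades the divisibility $W_n \mid R$ in $K(E)[\bz]$ to an identity $R = \lambda_n W_n$ with $\lambda_n \in \Bbb Q[g_2,g_3,x_0,y_0][\bz]$.

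The heart of the argument is then to prove $\deg_\bz \lambda_n = 0$. Writing $g = \bz \cdot P(B) - y_0\, c_n'\, l_0(B)\, lt_0(B)^2\, l_\theta(B)$ with $P := \prod_{i=1}^3 lt_i$ and $c_n' := \tfrac{n^2(n+1)^2(n-1)(n+2)}{16}$, only the $g$-rows of the Sylvester matrix of $\mathrm{Res}_B(f,g)$ depend on $\bz$, and each such entry is linear in $\bz$. Since the number of such rows is exactly $\deg_B f$, the Sylvester expansion gives $\deg_\bz R \le \deg_B f$. To pin down $\deg_B f = d$, I invoke the geometry of the cover: $x_0 = \wp \circ \sigma_n \colon \bar Y_n \to \Bbb P^1$ has degree $2d$ (by Theorem~\ref{deg-sigma}), is invariant under the hyperelliptic involution $a \mapsto -a$ (since $\wp$ is even), and hence descends through the degree-$2$ projection $B \colon \bar Y_n \to \Bbb P^1$ to a rational function of $B$ of degree $d$; as \eqref{f=0} is precisely the defining relation for that rational function, $\deg_B f = d$. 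Together with $\deg_\bz W_n = d$ and $R \ne 0$ (for generic parameters, $f$ and $g$ share no common $B$-root), the identity $R = \lambda_n W_n$ forces $\deg_\bz \lambda_n = 0$.

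The main technical point I anticipate is carrying out the Gauss-lemma step cleanly over the Weierstrass coordinate ring $R(E^\times) = \Bbb Q[g_2,g_3,x_0,y_0]/(y_0^2 - 4x_0^3 + g_2 x_0 + g_3)$, which is not a UFD globally: one resolves this by first establishing divisibility in the fraction field $K(E)$ (from the minimal polynomial argument) and then recovering integrality of $\lambda_n$ directly from the Sylvester expansion. A secondary, more computational point is the rigorous derivation of $\deg_B f = d$, which can be cross-checked against the degree tables for $l_0, l_i, lt_0, lt_i$ coming from Proposition~\ref{4lame} and Maier's tables.
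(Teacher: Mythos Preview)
The paper does not actually prove this proposition; the sentence immediately preceding it simply appeals to ``standard elimination theory'' and then states the result. Your argument supplies exactly the details the paper leaves implicit, and it is correct. The mechanism --- showing $R(\z_n)=0$ in $K(\bar X_n)$ so that the minimal polynomial $W_n$ divides $R$ in $K(E)[\bz]$, then bounding $\deg_\bz R \le \deg_B f$ from the Sylvester matrix (only the $\deg_B f$ rows coming from $g$ carry $\bz$, each linearly), and finally identifying $\deg_B f = \tfrac12 n(n+1)$ via the factorization of the degree-$n(n+1)$ map $\wp\circ\sigma_n$ through the hyperelliptic projection $B$ --- is the right one and matches what ``standard elimination theory'' would unpack to here.

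Two small remarks. First, your worry about Gauss's lemma over $R(E^\times)$ is unnecessary: since $W_n$ is monic in $\bz$ (visible in Theorem~\ref{main-thm} and all the examples), once you have $R = \lambda_n W_n$ in $K(E)[\bz]$ with $\lambda_n \in K(E)$, the element $\lambda_n$ is literally the leading $\bz$-coefficient of $R$, which already sits in $\Bbb Q[g_2,g_3,x_0,y_0]$ by the Sylvester construction. No UFD argument is needed. Second, the identity $R = \lambda_n W_n$ should be read in $R(E^\times)[\bz]$, i.e.\ after reducing $y_0^2$ by the Weierstrass relation; this is how the paper's worked examples (e.g.\ the factor $(\wp')^2$ in $\lambda_2$) are to be interpreted, and it is precisely the setting of your argument.
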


In particular, the pre-modular form $Z_n(\sigma; \tau) = W_n(Z)(\sigma; \tau)$ can be explicitly computed for any $n \in \Bbb N$ by way of the resultant $R(f, g; B)$. 

In practice, such a computation is time consuming even using computer. In the following, we apply it to the initial cases up to $n = 4$. As before we denote $x_0 = \wp(\sigma) =: \wp$ and $y_0 = \wp'(\sigma) =: \wp'$.

\begin{example}
For $n = 2$, it is easy to see that \small
\begin{equation*}
\begin{split}
f &= B^3 - 9\wp B^2 + 27(g_2 \wp + g_3), \\
g &= \bz B^3 - 9\wp' B^2 - 9\bz g_2 B + 27 (g_2 \wp' - 2 \bz g_3).
\end{split}
\end{equation*} \normalsize
The resultant $R(f, g; B)$ is calculated by the $6 \times 6$ \emph{Sylvester determinant}:
\small
\begin{equation*} 
\begin{vmatrix}
1 & -9\wp & 0 & 27(g_2 \wp + g_3) & 0 & 0 \\
0 & 1 & -9\wp & 0 & 27(g_2 \wp + g_3) & 0 \\
0 & 0 & 1 & -9\wp & 0 & 27(g_2 \wp + g_3) \\
\bz & -9\wp' & -9 \bz g_2 & 27(g_2 \wp' - 2\bz g_3) & 0 & 0 \\
0 & \bz & -9\wp' & -9 \bz g_2 & 27(g_2 \wp' - 2\bz g_3) & 0 \\
0 & 0 & \bz & -9\wp' & -9 \bz g_2 & 27(g_2 \wp' - 2\bz g_3) 
\end{vmatrix}.
\end{equation*}
\normalsize  
A direct evaluation gives
$$
R(f, g; B)(\bz) = -3^9 \Delta (\wp')^2 (\bz^3 - 3\wp \bz - \wp').
$$
Here $\Delta = g_2^3 - 27 g_3^2$ is the discriminant. This gives $W_2(\bz) = \bz^3 - 3\wp \bz - \wp'$ and $Z_2(\sigma; \tau) = W_2(Z) = Z^3 - 3\wp Z - \wp'$.
\end{example}

\begin{example} 
For $n = 3$, we have \small
\begin{equation*}
\begin{split}
f &= 16 B^6 - 576 B^5 \wp + 360 B^4 g_2 + 5400 B^3 (5 g_3 + 4 g_2 \wp) \\
&\qquad- 3375 B^2 g_2^2 - 84375 \Delta  - 101250 B g_2 (3 g_3 + 2 g_2 \wp), \\
g &= 16 B^6 \bz -1440 B^5 \wp' - 1800 B^4 g_2 \bz + 54000 B^3 (g_2 \wp' - g_3 \bz) \\
&\qquad - 16875 B^2 g_2^2 \bz - 506250 B g_2^2 \wp' + 421875 \Delta \bz.
\end{split}
\end{equation*} \normalsize

It takes a couple seconds to evaluate the corresponding $12 \times 12$ Sylvester determinant (e.g.~using \emph{Mathematica}) to get
$$
R(f, g; B)(\bz) = 2^{36} 3^{27} 5^{30} \Delta^5 (\wp')^4 W_3(\bz),
$$
where $W_3(\bz)$ is given by
$$
W_3(\bz) = \bz^6 - 15 \wp \bz^4 - 20 \wp' \bz^3 +(\tfrac{27}{4} g_2 - 45 \wp^2) \bz^2 - 12 \wp \wp' \bz - \tfrac{5}{4} \wp'^2. 
$$
It seems impractical to evaluate this resultant by hand. .
\end{example}

Both $Z_2$ and $Z_3$ are known to Dahmen \cite{Dahmen}. Here is a new example:

\begin{example} \label{ex:W4}
For $n = 4$, the expansions of the polynomials $f$ and $g$, as given in (\ref{f=0}) and (\ref{g=0}) by a direct substitution, are already too complicate to put here. Nevertheless, a couple hours \emph{Mathematica} calculation gives
$$
R(f, g; B)(\bz) = -2^{80} 3^{63} 5^{60} 7^{63} \Delta^{18} (\wp')^8 W_4(\bz),
$$
where $W_4(\bz)$ is the degree 10 polynomial:\small
\begin{equation} \label{W4} 
\begin{split}
W_4(\bz) &= \bz^{10} - 45 \wp \bz^8 - 120 \wp' \bz^7 + (\tfrac{399}{4}g_2 - 630 \wp^2) \bz^6 -504 \wp \wp' \bz^5 \\
&\quad  - \tfrac{15}{4} (280 \wp^3 - 49 g_2 \wp - 115 g_3) \bz^4 + 15(11 g_2 - 24 \wp^2) \wp' \bz^3\\
&\qquad  - \tfrac{9}{4} (140 \wp^4 - 245 g_2 \wp^2 + 190 g_3 \wp + 21 g_2^2) \bz^2 \\
&\qquad \quad -(40 \wp^3 - 163 g_2 \wp + 125 g_3) \wp' \bz + \tfrac{3}{4}(25 g_2 - 3\wp^2) (\wp')^2.
\end{split} 
\end{equation} \normalsize
The weight 10 pre-modular form $Z_4(\sigma; \tau)$ is then obtained.
\end{example}

We end this section with a brief discussion on the \emph{rationality property}. We have constructed two affine curves from $\bar X_n$. One is the hyperelliptic model $Y_n = \{(B, C)\mid C^2 = \ell_n(B)\}$, another one is $Y_n' : = \{(x_0, y_0, \bz) \mid y_0^2 = 4x_0^2 - g_2 x_0 - g_3, \, W_n(x_0, y_0; \bz) = 0\}$ which is understood as a degree $\tfrac{1}{2} n(n + 1)$ branched cover of the original curve $E = \{(x_0, y_0) \mid y_0^2 = 4 x_0^3 - g_2 x_0 - g_3\}$ under the projection $\sigma_n' : Y'_n \to E$ with defining equation $W_n(\bz) = 0$. 

$Y_n$ is birational to $Y'_n$ over $E$, namely the addition map $\sigma_n: Y_n \to E$ is compatible with $\sigma_n': Y_n' \to E$. Notice that both $\ell_n$ and $W_n$ have coefficients in $\Bbb Q[g_2, g_3]$. The explicit birational map $\phi: (B, C) \dasharrow (x_0, y_0, \bz)$ (given in Theorem \ref{t:xyk} and \ref{pi=sigma} via $\z_n = -\kappa$) also has coefficients in $\Bbb Q[g_2, g_3]$. This implies that $\phi$ is defined over $\Bbb Q$. Moreover $\phi$ extends to a \emph{birational morphism} 
$$
\xymatrix{\bar Y_n \cong \bar X_n \ar[rr]^\phi \ar[rd]^{\sigma_n} & &\bar Y_n' \ar[ld]_{\sigma_n'} \\ & E}
$$ 
by identifying $\sigma_n^{-1}(0_E)$ with $\z_n^{-1}(\infty)$. The morphism $\phi$ is an isomorphism outside those branch points for $Y_n \to \Bbb P^1$ (i.e.~$C = 0$). In particular, the non-isomorphic loci lie in $\z_n = 0$ by \eqref{xyk} and Theorem \ref{pi=sigma}. 

\begin{remark}
In contrast to the smoothness of $Y_n(\tau)$ for general $\tau$, for all $n \ge 3$ the model $Y'_n(\tau)$ is singular at points $\bz = 0 = y_0$ (and hence $x_0 = e_i$ for some $i$). Indeed from \eqref{xyk} this is equivalent to $C = 0$ and $l_i(B) lt_i(B)^2 = 0$ for some $1 \le i \le 3$. For $n = 2$, there is only one solution $B$ for each fixed $i$ (c.f.~Example \ref{lt-poly}). However, for $n \ge 3$ there are more than one solutions $B$. These points $(B, 0) \in Y_n$ are collapsed to the same point $(x_0, y_0, \bz) = (e_i, 0, 0) \in Y'_n$ under $\phi$, thus $(e_i, 0, 0)$ is a singular point of $Y'_n$. 

For $n = 3, 4$ this is easily seen from the equation $W_n(\bz) = 0$ given above since it contains a quadratic polynomial in $(\bz, \wp')$ as its lowest degree terms.
\end{remark}

In particular, the birational map $\phi^{-1}$ is also represented by rational functions $B = B(x_0, y_0, \bz)$ and $C = C(x_0, y_0, \bz)$ with coefficients in $\Bbb Q[g_2, g_3]$ and with at most poles along $\bz = 0$. In principle such an explicit inverse can be obtained by a Groebner basis calculation associated to the ideal of the graph $\Gamma_\phi$. 
%
%
The following statement is clear from the above description:

\begin{proposition}
Let $E$ be defined over $\Bbb Q$, i.e.~$g_2, g_3 \in \Bbb Q$. Then the Lam\'e curve $\bar Y_n$ is also defined over $\Bbb Q$ for all $n \in \Bbb N$. Moreover, $\bar Y'_n$ and all the morphisms $\sigma_n, \sigma_n', \phi$ are also defined over $\Bbb Q$. 

A rational point $(B, C) \in \bar Y_n$ is mapped to a rational point $(x_0, y_0, \bz) \in \bar Y_n'$ by $\phi$. For the converse, given $(x_0, y_0) \in E(\Bbb Q)$, a point $(x_0, y_0, \bz)$ in the $\sigma_n'$ fiber gives a unique $(B, C) \in \bar Y_n(\Bbb Q)$ if $\bz \in \Bbb Q$ and $(x_0, y_0, \bz) \ne (e_i, 0, 0)$ for any $i$.
\end{proposition}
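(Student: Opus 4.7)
The plan is to harvest the $\Bbb Q$-rationality from the explicit constructions already installed in the paper, so the argument is almost a bookkeeping exercise.

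First I would verify that the two curves are defined over $\Bbb Q$. For $\bar Y_n$, Theorem \ref{hyper-thm}(1) gives the hyperelliptic equation $C^2 = \ell_n(B, g_2, g_3)$ with $\ell_n \in \Bbb Q[B, g_2, g_3]$, so $g_2, g_3 \in \Bbb Q$ forces $\bar Y_n$ to be a $\Bbb Q$-curve. For $\bar Y_n'$, the defining ideal is generated by the Weierstrass relation $y_0^2 = 4 x_0^3 - g_2 x_0 - g_3$ together with $W_n(x_0, y_0; \bz) = 0$, and Proposition \ref{p:resultant} (via Theorem \ref{main-thm}) places $W_n$ in $\Bbb Q[g_2, g_3, x_0, y_0][\bz]$. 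The projection $\sigma_n'$ forgets $\bz$ and is tautologically defined over $\Bbb Q$.

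Next I would show $\phi$ (and hence $\sigma_n = \sigma_n' \circ \phi$) is defined over $\Bbb Q$. The formulas \eqref{xyk} of Theorem \ref{t:xyk} for $(x_0, y_0, \kappa = -\z_n)$ \emph{a priori} involve $e_1, e_2, e_3$ through the individual factors $l_i, lt_i$ ($i=1,2,3$). However, the combinations that actually appear in $\phi$ are only the symmetric products $\prod_{i=1}^3 l_i(B)$ and $\prod_{i=1}^3 lt_i(B)$ (in the numerator/denominator of $y_0$ and in the quotient for $\kappa$), which are symmetric in $(e_1, e_2, e_3)$ and hence, via $e_1+e_2+e_3=0$, $e_1e_2+e_2e_3+e_3e_1 = -g_2/4$, $e_1e_2e_3 = g_3/4$, lie in $\Bbb Q[g_2, g_3, B]$. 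Together with the $i$-independence assertion for $x_0(B)$ in Theorem \ref{t:xyk} (so its expression also lies in $\Bbb Q[g_2, g_3, B]$, e.g.\ after averaging over $i$), and the fact that $l_0, lt_0, l_\theta, c_n$ are already in $\Bbb Q[g_2, g_3, B]$, every component of $\phi$ is a $\Bbb Q$-rational function of $(B, C)$.

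Finally, for the statement on rational points, the forward implication is immediate: if $(B, C) \in \bar Y_n(\Bbb Q)$, then $(x_0, y_0, \bz) = \phi(B, C) \in \bar Y_n'(\Bbb Q)$ by the rationality of $\phi$. For the converse, recall from the Remark preceding the proposition that $\phi$ fails to be an isomorphism only on the locus $\bz = 0 = y_0$, and the image singular points of $\bar Y_n'$ are precisely $(e_i, 0, 0)$ for $i=1,2,3$. On the complement of this finite set $\phi$ is an isomorphism of $\Bbb Q$-varieties, so $\phi^{-1}$ is a morphism defined over $\Bbb Q$ there; equivalently, one can realize $\phi^{-1}$ explicitly by a Groebner basis computation on the graph ideal, or directly by using the relation $\bz\prod_{i=1}^3 lt_i(B) = -y_0 \cdot c_n^{-1} \cdot \text{rational in } B$ coming from \eqref{y/z} jointly with the $x_0$-equation to pin down the correct root $B$ over $\Bbb Q$, and then reading $C$ linearly from the $y_0$ formula. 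Under the given hypotheses $\bz \in \Bbb Q$ and $(x_0, y_0, \bz) \ne (e_i, 0, 0)$, the preimage $(B, C) = \phi^{-1}(x_0, y_0, \bz)$ is therefore uniquely determined and lies in $\bar Y_n(\Bbb Q)$.

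The only point requiring genuine observation is the symmetric-function reduction that cancels the apparent $e_i$-dependence in Theorem \ref{t:xyk}; once that is noted, there is no substantial obstacle, and the statement reduces to a direct reading of the formulas.
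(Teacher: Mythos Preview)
Your proposal is correct and follows the same approach as the paper, which does not give a separate proof but simply declares the statement ``clear from the above description''; you have spelled out that description in detail, including the symmetric-function reduction for the $e_i$ (which the paper asserts without elaboration when it says $\phi$ ``has coefficients in $\Bbb Q[g_2, g_3]$'') and the observation that the non-isomorphic locus on the $\bar Y_n'$ side is exactly $\{(e_i,0,0)\}$. One tiny slip: $\prod_{i=1}^3 l_i(B)$ does not actually appear in the formulas \eqref{xyk} for $\phi$ (only $\prod_i lt_i(B)$ does, in $y_0$), but this does not affect your argument.
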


\begin{remark}
It is well known that there are only few (i.e.~at most finite) rational points on a \emph{non-elliptic} hyperelliptic curve. This phenomenon is consistent with the irreducibility of the polynomial $W_n(\bz)$ over $K(E)$ in light of Hilbert's irreducibility theorem that there is a infinite (Zariski dense) set of $(g_2, g_3, x_0, y_0) \in \Bbb Q^4$ so that the specialization of $W_n(\bz)$ is still irreducible. 
Nevertheless, it might be interesting to see if $\z_n$ plays any role in the study of rational points. 
\end{remark}

\appendix

\section{A counting formula for Lam\'e equations} \label{Chou}

\centerline{By You-Cheng Chou \footnote{Taida Institute for Mathematical Sciences (TIMS), National Taiwan University, Taipei, Taiwan. Email: b99201040@ntu.edu.tw}}

\bigskip

Using the pre-modular forms constructed in \S \ref{new-Zn} and \S \ref{resultant}, we verify the $n = 4$ case of Dahmen's conjectural counting formula (Conjecture 73 in \cite{Dahmen}) for integral Lam\'e equations with finite monodromy. It is known that the finite monodromy group is necessarily a dihedral group. 

\subsection{Dahmen's conjecture}

Let $L_n(N)$ be the number of Lam\'e equations $w'' = (n(n + 1) \wp(z) + B) w$ up to linear equivalence which has finite monodromy isomorphic to the dihedral group $D_N$. Using the Hermite--Halphen ansatz \eqref{ansatz} and the theory in \S \ref{new-Zn}, the problem is reduced to the zero counting of the ${\rm SL}(2, \Bbb Z)$ modular form
\[
M_n(N):= \prod_{\substack{0\leq k_1,k_2 < N\\{\rm gcd}(k_1,k_2,N)=1}} Z_n\Big(\frac{k_1+k_2\tau}{N}; \tau\Big).
\]
Using this, by repeating Dahmen's argument in \cite{Dahmen}, Lemma 65, 74, we get

\begin{proposition} \label{mono-proj}
Suppose that for all $N\in\mathbb{Z}_{\geq 3}$ and $n\in\mathbb{N}$ we have that 
\[
\nu_{\infty}(M_n(N)) = a_n\phi(N) + b_n\phi\Big(\frac{N}{2}\Big),
\]
where 
$a_{2m}=a_{2m+1}=m(m+1)/2$, $b_{2m}=b_{2m-1}=m^2$. Then
\[
L_n(N) =\tfrac{1}{2} \left( \frac{n (n + 1) \Psi(N)}{24}-\left( a_n \phi(N) + b_n\phi \Big(\frac{N}{2}\Big) \right) \right) + \tfrac{2}{3}\epsilon_n(N),
\]
where $\epsilon_n(N) = 1$ if $N = 3$ and $n \equiv 1 \pmod 3$, and $\epsilon_n(N) = 0$ otherwise.

Furthermore, $Z_n(\sigma;\tau)$ with $\sigma$ a torsion point has only simple zeros in $\tau\in\mathbb{H}$.
\end{proposition}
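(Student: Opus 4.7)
The plan is to apply the standard valence formula for ${\rm SL}(2,\Bbb Z)$-modular forms to $M_n(N)$ and translate each contribution into the count $L_n(N)$, following the same template that Dahmen used for $n = 2, 3$ in \cite{Dahmen}. Each factor $Z_n((k_1+k_2\tau)/N;\tau)$ is $\Gamma(N)$-modular of weight $\tfrac{1}{2}n(n+1)$ by Corollary \ref{c:pre-mod}, and the right action of ${\rm SL}(2,\Bbb Z/N)$ on primitive pairs $(k_1,k_2) \bmod N$ permutes the factors, so $M_n(N)$ is ${\rm SL}(2,\Bbb Z)$-modular. After accounting for the $(\Bbb Z/N)^\times$-scaling action that identifies Galois-conjugate factors with coincident zero divisors on $Y(1)$, the relevant ``effective'' weight is $\tfrac{1}{2}n(n+1)\Psi(N)$, so the valence identity reads
\begin{equation*}
\frac{n(n+1)\Psi(N)}{24} = \nu_\infty(M_n(N)) + \tfrac{1}{2}\nu_i(M_n(N)) + \tfrac{1}{3}\nu_\rho(M_n(N)) + \sum_{[\tau]\ne\infty, i, \rho} \nu_\tau(M_n(N)).
\end{equation*}

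Next I would set up the bijection between non-elliptic zeros of $M_n(N)$ and Lam\'e equations with $D_N$ monodromy. By Theorem \ref{zero-pre-mod}, each zero of $Z_n(\sigma;\tau)$ at $(\tau_0,\sigma_0)$ with $\sigma_0$ a primitive $N$-torsion produces a unique $a \in X_n(\tau_0)$ matching \eqref{e:rs}, and the monodromy matrices \eqref{e:monodromy} combined with the swap $w_a \leftrightarrow w_{-a}$ generate a dihedral group of order $2N$. Since $(k_1,k_2)$ and $(-k_1,-k_2)$ correspond to the same Lam\'e equation (they lie in the same $w_a \leftrightarrow w_{-a}$ orbit), the count of non-elliptic zeros equals $2L_n(N)$, accounting for the prefactor $\tfrac{1}{2}$. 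Substituting the hypothesis on $\nu_\infty$ then recovers everything except the $\tfrac{2}{3}\epsilon_n(N)$ correction, which must come from the elliptic-point terms.

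At $\tau = i$ the extra order-$2$ automorphism of $E_i$ can fix a primitive $N$-torsion only if that point lies in $E_i[2]$, impossible for $N \ge 3$, so $\nu_i(M_n(N)) = 0$. At $\tau = \rho$ (the order-$3$ elliptic point) the extra automorphism of $E_\rho$ acts on primitive $N$-torsion points; a case analysis shows no new zero appears for $N \ne 3$, while for $N = 3$ the $\Bbb Z/3$-symmetry of $\bar X_n(\rho)$ combined with the Hermite--Krichever ansatz \eqref{HK} forces $\nu_\rho(M_n(3)) \ne 0$ precisely when $n \equiv 1 \pmod 3$. The arithmetic of $\tfrac{1}{3}\nu_\rho$, after removal from the valence sum and passing through the $\tfrac{1}{2}$ normalization, produces the $\tfrac{2}{3}\epsilon_n(N)$ shift. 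The simplicity of the zeros of $Z_n(\sigma;\tau)$ at torsion $\sigma$ is then automatic: Theorem \ref{zero-pre-mod} identifies each zero with a distinct Lam\'e equation, and the counting identity saturates the valence bound, so any multiple zero would add extra valence-multiplicity without producing a new Lam\'e equation, contradicting the equality.

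The main obstacle I expect is the elliptic-point analysis at $\tau = \rho$ for $N = 3$, specifically identifying the $\Bbb Z/3$-action on $\bar X_n(\rho)$ and pinning down the local order of $Z_n$ at the $\rho$-invariant primitive $3$-torsion points; everything else is routine modular-form bookkeeping parallel to Dahmen's \cite{Dahmen} treatment of $n = 2, 3$.
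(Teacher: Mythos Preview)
Your plan captures the valence-formula setup and the bijection between zeros and Lam\'e equations correctly, but it contains a genuine circularity at the decisive step. The valence identity combined with the hypothesis on $\nu_\infty$ and an elliptic-point analysis computes $\sum_{P\ne \infty,i,\rho}\nu_P(M_n(N))$ \emph{with multiplicity}. The bijection from Theorem \ref{zero-pre-mod} tells you only that the number of \emph{distinct} zeros equals $2L_n(N)$. Hence what you obtain is the inequality $L_n(N)\le U_n(N)$, with equality if and only if every factor $Z_n((k_1+k_2\tau)/N;\tau)$ has only simple zeros. You then invoke ``the counting identity saturates the valence bound'' to conclude simplicity, but that identity is exactly the formula $L_n(N)=U_n(N)$ you are trying to prove --- so the argument closes on itself.

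The paper breaks this circle by bringing in an \emph{independent} lower bound: Dahmen's dessins d'enfants computation of the projective count $PL_n(N)$ in \cite{Dahmen2}. Using the relation $PL_n(N)=L_n(N)+L_n(2N)$ for odd $N$ (and $PL_n(N)=L_n(2N)$ for even $N$), one bounds $PL_n(N)\le U_n(N)+U_n(2N)$ and checks that the right-hand side collapses to Dahmen's explicit value $\tfrac{n(n+1)}{12}(\Psi(N)-3\phi(N))+\tfrac{2}{3}\epsilon_n(N)$. Since Dahmen proved this is an equality, every intermediate inequality is forced to be an equality, yielding both $L_n(N)=U_n(N)$ and the simplicity of zeros simultaneously. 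Your direct elliptic-point analysis at $\tau=\rho$ would at best refine the upper bound; it cannot supply the missing lower bound, so the $PL_n$ comparison (or some other external input) is essential.
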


\begin{proof}
Recall the formula for ${\rm SL}(2, \Bbb Z)$ modular forms of weight $k$: 
\[
\sum_{P\neq \infty,\, i,\, \rho}\nu_P(f)+\nu_{\infty}(f)+\frac{\nu_i(f)}{2}+\frac{\nu_{\rho}(f)}{3}=\frac{k}{12}.
\]
For $f = M_n(N)$, the weight $k = \frac{1}{2} n(n + 1) \Psi(N)$. Notice that the counting is always doubled under the symmetry $(k_1, k_2) \to (N - k_1, N - k_2)$, thus by \cite{Dahmen}, Lemma 65, an upper bound for $L_n(N)$ is given by 
\[
U_n(N) := \tfrac{1}{2} \left( \frac{n (n + 1) \Psi(N)}{24}-\left( a_n \phi(N) + b_n\phi \Big(\frac{N}{2}\Big) \right) \right) + \tfrac{2}{3}\epsilon_n(N).
\] 
That is, $L_n(N)\leq U_n(N)$. Moreover, the equality holds if and only if each factor $Z_n((k_1 + k_2\tau)/N; \tau)$ has only simple zeros. 

We will show the equality holds by comparing it with the counting formula for the projective monodormy group $PL_n(N)$ (c.f.~\cite{Dahmen}, Lemma 74).

We recall the relation between $L_n(N)$ and $PL_n(N)$:
\[
PL_n(N)=\left\lbrace 
\begin{array}{ll}
L_n(N)+L_n(2N) & {\rm if}\ N\ {\rm is\ odd}, \\
L_n(2N) & {\rm if}\ N\ {\rm is\ even}.
\end{array}
\right.
\]
If $n$ is even and $N$ is odd, we have
\[
\begin{split}
&PL_n(N) = L_n(N) + L_n(2N)\\ 
&\leq \tfrac{1}{2} \left( \frac{n(n + 1) \Psi(N)}{24}-\left( \frac{\frac{n}{2}(\frac{n}{2}+1)}{2}\phi(N) + \frac{n^2}{4}\phi\Big(\frac{N}{2}\Big) \right) \right) + \tfrac{2}{3}\epsilon_n(N)
\\
& \quad + \tfrac{1}{2} \left( \frac{n(n + 1) \Psi(2N)}{24}-\left( \frac{\frac{n}{2}(\frac{n}{2}+1)}{2}\phi(2N) + \frac{n^2}{4}\phi(N) \right) \right) + \tfrac{2}{3}\epsilon_n(2N)
\\
& = \frac{n(n+1)}{12}\left(\Psi(N)-3\phi(N)\right)+\tfrac{2}{3}\epsilon_n(N)
\end{split}
\]
For the last equality, we use $\epsilon_n(2N)=0$, $\Psi(2N) = 3\Psi(N)$ and $\phi(2N) = \phi(N)$. (If $N$ is even, the relations are $\epsilon_n(N) = 0$, $\Psi(2N) = 4\Psi(N)$ and $\phi(2N)=\phi(N)$.)
For the other three cases with $(n, N)$ being (even, even), (odd, odd) or (odd, even), the computations are similar, and all lead to
\[
PL_n(N)\leq \frac{n(n+1)}{12}\left(\Psi(N)-3\phi(N)\right)+\tfrac{2}{3}\epsilon_n(N).
\]

On the other hand, using the method of \emph{dessin d'enfants}, Dahmen showed directly that the equality holds \cite{Dahmen2}. Thus all the intermediate inequalities are indeed equalities, and in particular $L_n(N) = U_n(N)$ holds.
\end{proof}

\subsection{$q$-expansions for some modular forms} \label{q-expansions}
Recall that
\[
\begin{split}
\sum_{m\in\mathbb{Z}} \frac{1}{(m+z)^k} &= \frac{1}{(k-1)!}(-2\pi i)^k \sum_{n=1}^{\infty} n^{k-1} e^{2\pi i n z}, \\
\sum_{n\in\mathbb{Z}} \frac{1}{(x+n)^2} &= \pi^2 \cot^2(\pi x)+\pi^2, \\
\sum_{n\in\mathbb{Z}} \frac{1}{(x+n)^3} &= \pi^3 \cot^3(\pi x)+\pi^3 \cot(\pi x).
\end{split}
\]
We compute the $q$-expansions for $g_2,g_3,\wp,\wp',Z$, where $q=e^{2\pi i\tau}$:
\[
\begin{split}
g_2 = 60 \sum_{(n,m)\neq (0,0)} \frac{1}{(n+m\tau)^4} = 60\Big( 2\zeta(4)+2\frac{(-2\pi i)^4}{3!}\sum_{n=1}^{\infty}\sigma_3(n) q^{n}\Big),
\end{split}
\] where $\sigma_k(n):=\sum_{d|n}d^k$. Similarly, 
\[
\begin{split}
g_3 = 140 \sum_{(n,m)\neq (0,0)} \frac{1}{(n+m\tau)^6} = 140\Big( 2\zeta(6)+2\frac{(-2\pi i)^6}{5!}\sum_{n=1}^{\infty}\sigma_5(n) q^{n}\Big).
\end{split}
\]

Let $z=t+s\tau$. For $s=0$, we have \small
\[
\begin{split}
\wp'(t; \tau) 
&= -2\sum_{n,m\in\mathbb{Z}}\frac{1}{(t+n+m\tau)^3}
\\
&= -2\sum_{n\in\mathbb{Z}}\frac{1}{(t+n)^3} -2\sum_{m=1}^{\infty}\sum_{n\in\mathbb{Z}}\left( \frac{1}{(m\tau+n+t)^3}-\frac{1}{(m\tau+n-t)^3}\right) 
\\
&= -2\sum_{n\in\mathbb{Z}}\frac{1}{(t+n)^3} - 2\sum_{m=1}^{\infty} \frac{(-2\pi i)^3}{2!}\sum_{n=1}^{\infty} n^2\left( e^{2\pi in(m\tau +t)} - e^{2\pi in(m\tau-t)}\right)
\\
&= -2\pi^3 \cot(\pi t) - 2\pi^3 \cot^3(\pi t) + 16\pi^3 \sum_{n,m=1}^{\infty} n^2 \sin(2\pi nt)\,q^{nm}.
\end{split}
\] 
\[
\begin{split}
&\wp(t; \tau) = \frac{1}{t^2}+\sum_{(n,m)\neq (0,0)}\left( \frac{1}{(t+n+m\tau)^2}-\frac{1}{(n+m\tau)^2}\right)
\\
&=\sum_{n\in\mathbb{Z}}\frac{1}{(t+n)^2}-\sum_{n=1}^{\infty}\frac{2}{n^2} + \sum_{m=1}^{\infty}\sum_{n\in\mathbb{Z}}\left( \frac{1}{(m\tau+t+n)^2}+\frac{1}{(m\tau-t+n)^2}-\frac{2}{(m\tau+n)^2}\right)
\\
&= \pi^2 \cot^2(\pi t)+\tfrac{2}{3}\pi^2+\sum_{m=1}^{\infty}(-2\pi i)^2 \sum_{n=1}^{\infty}\left( e^{2\pi in(m\tau+t)} + e^{2\pi in(m\tau-t)} - 2e^{2\pi inm\tau}\right)
\\
&= \pi^2 \cot^2(\pi t)+\tfrac{2}{3}\pi^2 +8\pi^2\sum_{n, m=1}^{\infty}(1-\cos 2n\pi t)q^{nm}.
\end{split}
\]
\normalsize
Also, the Hecke function $Z$ (cf.~\eqref{e:Hecke}): 
$$
Z(t; \tau) = \pi \cot(\pi t)+4\pi \sum_{n, m=1}^{\infty} (\sin 2n\pi t) q^{nm}.
$$

For $s=\tfrac{1}{2}$, we have \small
\[
\begin{split}
&\wp'(t + \tfrac{1}{2}\tau; \tau) = -2\sum_{(n, m)\neq (0,0)}\frac{1}{(t+n+(\frac{1}{2}+m)\tau)^3}
\\
&= -2\sum_{m=1}^{\infty}\left( \sum_{m\in\mathbb{Z}}\frac{1}{(n+t+(m-\frac{1}{2})\tau)^3}-\sum_{n\in\mathbb{Z}}\frac{1}{(n-t+(m-\frac{1}{2})\tau)^3}\right)
\\
&= -2\frac{(-2\pi i)^3}{2!}\sum_{n, m=1}^{\infty}n^2\left( e^{2\pi in(t+(m-\frac{1}{2})\tau)} - e^{2\pi in(-t)+(m-\frac{1}{2})\tau}\right)
\\
&= 16\pi^3 \sum_{n, m=1}^{\infty}n^2(\sin 2\pi nt)q^{n(m-\frac{1}{2})}.
\end{split}
\] \normalsize
Similarly, 
\[
\wp(t + \tfrac{1}{2}\tau; \tau) = -\tfrac{1}{3}\pi^2 +8\pi^2 \sum_{n,m=1}^{\infty} nq^{nm}-8\pi^2\sum_{n,m=1}^{\infty} n(\cos 2\pi nt)q^{n(m-\frac{1}{2})},
\] 
and $Z(t + \tfrac{1}{2}\tau; \tau) = 4\pi\sum_{n,m=1}^{\infty}(\sin 2\pi nt) q^{n(m-\frac{1}{2})}$.

\subsection{The counting formula for $n=4$}

Now we give the computations for $n=4$ and prove the formula $L_4(N) = U_4(N)$ from Proposition \ref{mono-proj}.

\begin{theorem}\label{count n=4}
For $n=4$ and $N\in\mathbb{Z}_{\geq 3}$, we have 
\[
L_4(N)=\tfrac{1}{2}\left( \tfrac{5}{6}\Psi(N) - \left( 3\phi(N)+4\phi\Big(\frac{N}{2}\Big)\right)\right).
\] 
Moreover, $Z_4(\sigma; \tau)$ with $\sigma \in E_\tau[N]$ has only simple zeros in $\tau \in \Bbb H$.
\end{theorem}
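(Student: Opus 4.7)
The strategy is to apply Proposition \ref{mono-proj} with $n = 4$ (so $a_4 = 3$, $b_4 = 4$), which reduces the theorem --- both the counting formula and the simplicity of zeros --- to verifying the single cusp-order identity $\nu_\infty(M_4(N)) = 3\phi(N) + 4\phi(N/2)$ for all $N \geq 3$. I would partition the pairs $(k_1,k_2)$ in the product $M_4(N)$ by the value of $s := k_2/N \pmod 1$: the $\phi(N)$ pairs with $s = 0$; the (for $N$ even) $2\phi(N/2)$ pairs with $s = \tfrac12$; and all remaining ``generic'' pairs. For each factor $Z_4((k_1+k_2\tau)/N;\tau)$ I would substitute the $q$-expansions of $g_2, g_3, \wp, \wp', Z$ from Section \ref{q-expansions}, suitably extended to arbitrary $s \in (0,1)$, into the explicit polynomial $W_4$ from Example \ref{ex:W4} and extract $\nu_\infty$.

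For the generic factors a direct computation shows the constant ($q^0$) coefficient is
\[
-\pi^{10}\, \xi^2\,(\xi^2 - 1)^2\,(\xi^2 - 4)(\xi^2 - 9), \qquad \xi := 2s - 1,
\]
independent of $r = k_1/N$ (at leading order $\wp'$ vanishes while $Z_0 = i\pi\xi$ is purely in $\xi$). Its roots $\xi \in \{0, \pm 1, \pm 2, \pm 3\}$ meet the range $\xi \in (-1,1)$ only at $\xi = 0, \pm 1$, corresponding exactly to the excluded values $s \in \{0, \tfrac12\}$. Hence every generic factor contributes $\nu_\infty = 0$.

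For the $s = \tfrac12$ factors, the parity identity $W_4(-\bz; -\sigma) = W_4(\bz;\sigma)$ forces every monomial of $W_4$ to have even combined $(\bz,\wp')$-degree, so the $q$-expansion of $Z_4(k_1/N + \tau/2;\tau)$ lies in integer powers of $q$; and because every such monomial contains a factor of $\bz$ or $\wp'$ (both starting at $q^{1/2}$), the $q^0$-coefficient vanishes automatically. Summing the three combined-degree-two monomials ($\bz^2$, $\bz\wp'$, $\wp'^2$) produces the $q^1$-coefficient $(576 + 6336 - 6912)\,\pi^{10}\sin^2(2\pi k_1/N) = 0$. I would then compute the $q^2$-coefficient explicitly and verify it is a non-vanishing trigonometric polynomial in $k_1/N$ whenever $\gcd(k_1, N/2) = 1$, giving $\nu_\infty = 2$. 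For the $s = 0$ factors, substituting the $q^0$ data $\wp = c^2 + 2\pi^2/3$, $\wp' = -2c(c^2+\pi^2)$, $g_2 = 4\pi^4/3$, $g_3 = 8\pi^6/27$ with $c = \pi\cot(\pi k_1/N)$ into $W_4(c)$ yields independent cancellations at each of the six powers $c^{10}, \pi^2 c^8, \pi^4 c^6, \pi^6 c^4, \pi^8 c^2, \pi^{10}$ (for instance the $c^{10}$ coefficient is $1 - 45 + 240 - 630 + 1008 - 1050 + 720 - 315 + 80 - 9 = 0$), so the $q^0$-coefficient is identically zero in $c$. I would then push the $q$-expansions one and two orders further, resubstitute, and verify that the $q^1$ and $q^2$ coefficients also vanish identically in $c$, while the $q^3$-coefficient is a non-zero trigonometric polynomial in $\pi k_1/N$ for $k_1 \not\equiv 0 \pmod N$, giving $\nu_\infty = 3$. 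Summing over the three cases gives $\nu_\infty(M_4(N)) = 3\phi(N) + 4\phi(N/2)$, and Proposition \ref{mono-proj} closes the proof.

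The principal obstacle will be the $s = 0$ verification: producing three further rounds of polynomial identities (for the $q^1, q^2, q^3$ coefficients) analogous to the $q^0$ cancellations above. Each identity requires tracking the next-order $q$-correction to each of $g_2, g_3, \wp, \wp', Z$ --- explicit trigonometric series in $k_1/N$ from Section \ref{q-expansions} --- and resubstituting into $W_4$; I would handle this with computer algebra. A conceptual shortcut identifying these vanishings with a prescribed higher-order zero of $\z_4$ along the cusp $s=0$, coming from the degeneration of the Lam\'e curve $\bar X_4$ at $\tau = i\infty$ (where $\Delta = g_2^3 - 27 g_3^2 = 0$ so that the whole hyperelliptic model collapses to the parametrization $(c, c^2 + 2\pi^2/3, -2c(c^2+\pi^2))$), would make the proof considerably cleaner but is not strictly necessary.
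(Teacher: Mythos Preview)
Your proposal is correct and follows essentially the same approach as the paper: both reduce via Proposition \ref{mono-proj} to the cusp-order identity $\nu_\infty(M_4(N)) = 3\phi(N) + 4\phi(N/2)$, split into the cases $s=0$, $s=\tfrac12$, and generic $s$, and use the explicit $q$-expansions of $g_2,g_3,\wp,\wp',Z$ substituted into $W_4$ (with computer algebra) to obtain $\nu_\infty = 3, 2, 0$ respectively. The paper simply reports the leading $q$-terms from a Mathematica computation (e.g.\ $W_4(Z) = 2^{14}3^35^27\,\pi^{10}\cos^2(\pi t)\sin^2(\pi t)\,q^3 + O(q^4)$ for $s=0$) rather than spelling out the intermediate cancellations you sketch, but the route is identical.
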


\begin{proof}
For $n=4$, the pre-modular form $Z_4 = W_4(Z)$ is given in \eqref{W4}: \small
\[
\begin{split}
W_4(Z) &=Z^{10} - 45 \wp Z^8 - 
120 \wp' Z^7 + (\tfrac{399}{4} g_2 - 630 \wp^2) Z^6 - (504 \wp \wp') Z^5 
\\
&\qquad - \tfrac{15}{4} (280 \wp^3 - 49 g_2 \wp - 115 g_3) Z^4 + 15 (11 g_2 - 24 \wp^2) \wp' Z^3 
\\ 
&\qquad\qquad - \tfrac{9}{4} (140 \wp^4 - 245 g_2 \wp^2 + 190 g_3 \wp + 21 g_2^2) Z^2 
\\ 
&\qquad\qquad\qquad - (40 \wp^3 - 163 g_2 \wp + 125 g_3) \wp Z + \tfrac{3}{4} (25 g_2 - 3 \wp^2) \wp'^2,
\end{split}
\] \normalsize
where $Z$ is the Hecke function. We compute the asymptotic behavior of $W_4(Z)$ when $\tau\rightarrow \infty$. Let $z=t+s\tau$. We divide the problem into two cases

(1) $s\equiv 0 \pmod{1}$: According to the $q$-expansion given in \S \ref{q-expansions}, we have
\[
g_2\rightarrow \tfrac{3}{4}\pi^4,\qquad g_3\rightarrow \tfrac{8}{27}\pi^6,\qquad  Z(z)\rightarrow \pi \cot(\pi t),
\]
\[
\begin{split}
\wp'(z) \rightarrow -2\pi^3 \cot(\pi t)-2\pi^3 \cot^3(\pi t), \qquad
\wp(z) \rightarrow \pi^2 \cot^2(\pi t)+\tfrac{2}{3}\pi^2.
\end{split}
\]
A direct computation shows that $W_4(Z)$ has zeros at $\infty$ when $s=0$. 

By replacing all the modular forms $g_2,g_3,\wp,\wp'$ and $Z$ in $W_4(Z)$ with their $q$-expansions, we have (e.g.~using \emph{Mathematica})
\[
W_4(Z)= 2^{14} 3^3 5^2 7 \,\pi^{10} \cos^2(\pi t)\sin^2 (\pi t) q^3 + O(q^4)
\]

(2) $s\not\equiv 0 \pmod{1}$: In this case we have
\[
Z\rightarrow 2\pi i\left( s-\tfrac{1}{2}\right),\qquad  \wp(z)\rightarrow -\tfrac{1}{3}\pi^2,
\]
\[
\wp'(z)\rightarrow 0,\qquad g_2\rightarrow \tfrac{4}{3}\pi^4,\qquad g_3\rightarrow \tfrac{8}{27}\pi^6.
\]
Hence the constant term of $W_4(Z)$ is given by 
\[
\begin{split}
W_4(z) &= - 64 \pi^{10} (-2 + s)  (-1 + s)^2 s^2  (1 + s) \\
& \qquad \times (-3 + 2 s) (-1 + 2 s)^2 (1 + 2 s) + O(q).
\end{split}
\]
If $s\not\equiv 0 \pmod{1}$ then $W_4(Z)$ has zero at $\tau = \infty \Longleftrightarrow s\equiv \tfrac{1}{2} \pmod{1}$.

Now we fix $s=\tfrac{1}{2}$ and replace the modular forms $g_2,g_3,\wp,\wp'$ and $Z$ in $W_4(Z)$ with their $q$-expansions. We get 
\[
W_4(Z)= 2^{10} 3^3 5^2 7\, \pi^{10} \cos(\pi t)^2 \sin(\pi t)^2 q^2 + O(q^3).
\]
These computations for the $q$-expansions imply that
\[
\begin{split}
\nu_{\infty}(M_4(N)) &= 3 \, \#\left\lbrace\, 1 \leq k_1 \leq N\mid {\rm gcd}(N, k_1) = 1\,\right\rbrace 
\\
&\qquad + 2 \, \#\left\lbrace\, 0\leq k_1\leq N \mid {\rm gcd}(N/2, k_1) = 1\, \right\rbrace
\\
& = 3\phi(N) + 4\phi(N/2).
\end{split}
\]
Since the value of $\nu_{\infty}(M_4(N))$ coincides with the assumption in Proposition \ref{mono-proj} for $n = 4$, the theorem follows from it accordingly.
\end{proof}

\end{document}